\def\beq{\begin{equation}}
\def\eeq{\end{equation}}
\def\beqn{\begin{equation*}}
\def\eeqn{\end{equation*}}
\def\bearn{\begin{eqnarray*}}
\def\eearn{\end{eqnarray*}}
\def\bear{\begin{eqnarray}}
\def\eear{\end{eqnarray}}
\def\barr{\begin{array}}
\def\earr{\end{array}}
\newtheorem{lemma}{Lemma}[section]
\newtheorem{proposition}[lemma]{Proposition}
\newtheorem{corollary}{Corollary}[section]
\newtheorem{thm}{Theorem}
\theoremstyle{remark}
\newtheorem{remark}{Remark}
\renewcommand{\P}{\mathbb{P}}
\newcommand{\N}{\mathbb{N}}
\newcommand{\E}{\mathbb{E}}
\newcommand{\R}{\mathbb{R}}
\newcommand{\Prob}{\mathbb{P}}
\newcommand{\ind}[1]{\mathbbm{1}\{#1\}}
\def\mcO{{\mathcal O}}
\def\hg{{\hat{g}}}
\title{Covert Cycle Stealing in a Single FIFO Server (extended version)}
\date{\today}
\author[,1]{Bo Jiang  \thanks{\texttt{bjiang@sjtu.edu.cn}}}
\author[,2]{Philippe Nain \thanks{\texttt{philippe.nain@inria.fr}}}
\author[,3]{Don Towsley \thanks{\texttt{towsley@cs.umass.edu. The research of this author was supported by the US NSF grant CNS-1564067 and the LABEX MILYON (ANR-10-LABX-0070) of Université de Lyon, within the program "Investissements d'Avenir" (ANR-11-IDEX- 0007) operated by the French National Research Agency (ANR). }}}
\affil[1]{John Hopcroft Center for Computer Science, Shanghai Jiao Tong University, Shanghai, China}
\affil[2]{Inria, Ecole Normale Sup\'erieure de Lyon, LIP, 46 all\'ee d'Italie, Lyon, 69364, France}
\affil[3]{College of Information and Computer Sciences, University of Massachusetts, Amherst, MA, 01003, USA. }
\begin{document}
	\maketitle

\begin{abstract}

Consider a setting where Willie generates a Poisson stream of jobs and routes them to a single server that follows the first-in first-out discipline.
Suppose there is an adversary Alice, who desires to receive service without being detected.  We ask the question: what is the number of jobs that she can receive covertly, i.e. without being detected by Willie?  In the case where both Willie and Alice jobs have exponential service times with respective rates $\mu_1$ and $\mu_2$, 
we demonstrate a phase-transition when Alice adopts the strategy of inserting a single job probabilistically when the server idles :  
over $n$ busy periods, she can achieve a covert throughput, measured by the expected number of jobs  covertly inserted, of $\mcO(\sqrt{n})$  when $\mu_1 < 2\mu_2$, $\mcO(\sqrt{n/\log n})$  when $\mu_1 = 2\mu_2$, and  $\mcO(n^{\mu_2/\mu_1})$  when $\mu_1 > 2\mu_2$.
When both Willie and Alice jobs have general service times we establish an upper bound for the number of jobs Alice can execute covertly.
This bound is related to the Fisher information. More general insertion policies are also discussed.

\end{abstract}

{\bf keywords: Cycle stealing; Covert communication; Queue.}

\section{Introduction}
\label{sec:intro}

This paper considers the following problem.  Willie has a sequence of jobs that arrive at a first-in first-out (FIFO) queue with a single server, whose processing rate is known to Willie.  There exists another actor, Alice, who wants to sneak jobs into the queue for the purpose of stealing processing cycles from Willie.  This paper asks the following question: can Alice process her jobs without Willie being able to determine this occurrence beyond making a random guess and, if she can, what is her achievable job processing rate?  Answers to this question may apply to several scenarios.  Alice could administer a data center, contract to provide Willie with a server with a guaranteed performance, and then resell some of the processing cycles \cite{EFNBD12}.  Similar considerations apply to network contracts. Willie could own a home computer and Alice could install malware for the purpose of stealing computational resources.  

In order to address this question of {\em covert cycle stealing}, we adopt the following model.  Willie's jobs arrive according to a Poisson process to a FIFO queue served by a single server with a specified processing rate.  Service times of Willie's jobs are assumed to be independent and identically distributed (iid)  according to a general distribution.  Alice can insert jobs as she wishes.  Her service times are also iid coming from a general distribution that may differ from that of Willie's. Once an Alice job starts service, it must remain in service until completion; this can interfere with the processing of Willie's jobs.  Last, both Willie and Alice know their own and the other party's service time distributions and can observe the arrival and departure times of Willie's jobs.  

We formulate the problem as a statistical hypothesis testing problem where Willie's task is to determine whether or not Alice is stealing cycles, based on observed arrivals and departures.  We study the {\em Insert-at-End-of-Busy-Period} (IEBP) policy, where Alice (probabilistically) inserts a single job each time a Willie busy period (to be defined) ends.
We obtain several results, of which the most interesting hold for exponential services for both Willie (rate $\mu_1$) and
Alice (rate $\mu_2$) and establish that over $n$ busy periods, Alice can achieve a covert throughput -- defined as the expected number of covertly inserted jobs --  of 
$\mathcal{O }(\sqrt{n})$ when $\mu_1 < 2\mu_2$, $\mathcal{O} (\sqrt{n/\log n})$ when $\mu_1 = 2\mu_2$, and  $\mathcal{O} (n^{\mu_2/\mu_1})$ when $\mu_1 > 2\mu_2$.  
This is interesting in part because of the phase transition at $\mu_1 =2\mu_2$; earlier studies of covert communications and in steganography focused on establishing $O (\sqrt{n})$ behavior through the control of Alice's parameters, avoiding regions in the parameter space where this behavior  might not hold.

In addition to the above results for the IEBP policy when service times are exponentially distributed, we show that IEBP can also achieve a covert throughput of $\mathcal{O}(\sqrt{n})$ when Willie jobs have general service times and Alice jobs have (hyper-)exponential service times, under some constraints on the service rates.

The rest of the paper is organized as follows.  Section \ref{sec:related} discusses related work. Section \ref{sec:model} introduces the model and needed background on hypothesis testing.  Section \ref{sec:IEBP} introduces the IEBP policy.  Section \ref{sec:main-results} lists the main results, and some preliminary results are established in Section \ref{sec:preliminary-results}. 
Sections \ref{sec:achievability-IEBP-proof}-\ref{sec:proof-phase-transition-exponential} contain the proofs of the main results.
Section \ref{sec:II} focuses on the II policy, a variant of  the  IEBP  policy.   Section  \ref{sec:IIA} introduces  the  II-A  policy  and  derives  an  asymptotic  upper  bound  on Alice's capacity.  The section also derives tighter asymptotic upper bounds for the II-A policy when Alice introduces batches whose sizes are geometrically distributed.  
Concluding remarks are given in Section \ref{sec:conclusion}

A word on the notations. For any $a\in [0,1]$, let $\bar a:=1-a$. We denote the convolution of $f$ and $g$ by $f*g$ and the $n$-fold tensor product of $f$ with itself is denoted by $f^{\otimes n}$; recall that $f^{\otimes n}(x_1, \dots, x_n) = \prod_{i=1}^n f(x_i)$ with $x_i\in \R^d$ ($d\geq 1$). Throughout we use the shorthand notations $t_{i:j}$ for $t_i,\ldots,t_j$ for $i<j$, $a_n \sim_n b_n $ for $\lim_{n\to\infty} a_n / b_n = 1$, and $\lim_n a_n$ (resp. $\liminf_n a_n$, $\limsup_n a_n$) for $\lim_{n\to\infty} a_n$ (resp. $\liminf_{n\to\infty} a_n$, $\limsup_{n\to\infty} a_n$).


\section{Related work}
\label{sec:related}

Cycle stealing has been analyzed in the queueing literature in the context of task assignment in multi-server systems. The goal is to allow servers to borrow cycles from other servers while they are idle so as to reduce backlogs and latencies and prevent servers from being under-utilized \cite{Mor-2003,Mor-Sig-2003,Mor-2005}.
These papers focus on the performance analysis of such systems, in particular, mean response times with or without the presence of switching costs. There is no attempt to hide or cover up the theft of cycles.

This paper focuses on the ability of an unknown user to steal cycles without the owner of the server detecting this. Thus it is an instance of a much broader set of techniques used in digital steganography and covert communications.  Steganography is the discipline of hiding data in objects such as digital images.  A steganographic system modifies fixed-size finite-alphabet covertext objects into stegotext containing
hidden information. A fundamental result of steganography is the {\em square root law (SRL)}, $\mathcal{O}(\sqrt{n})$ symbols of an $n$ symbol covertext may safely be altered to hide an $\mathcal{O}(\sqrt{n}\log n)$-bit message \cite{Fridrich-2009}.
Covert communications is concerned with the transfer of information in a way that cannot be detected, even by an optimal detector. Here, there exists a similar SRL: suppose Alice may want to communicate to Bob in the face of a third party, Willie, without being detected by Willie.  When communication takes place over a channel characterized by additive white Gaussian noise, it has been established that Alice can transmit $O(\sqrt{n})$ bits of information in $n$ channel uses \cite{BGT12}.  This result has been extended to optical (Poisson noise) channels \cite{Boson-2015}, binary channels \cite{CBCJ13}, and many others \cite{Bloch-2016, Wang-2016}.  It has also been extended to include the presence of jammers \cite{Sobers-2017}, and to network settings \cite{Sheik-2018}.  Like our work, both steganography and covert communications rely on the use of statistical hypothesis testing.  One difference from covert communications is that in our setting Alice hides her jobs in exponentially distributed noise (Willie's service times), that is exponentially distributed  Poisson processes (exponentially distributions) and sometimes more general distributions, whereas it is a zero mean Gaussian noise in covert communications.  In addition, in the communications context, Alice has control over the power that she transmits at whereas in our context, Alice does not control the size of her jobs, only the rate at which they are introduced.  

This work also has ties to the detection of service level agreement (SLA) violation problem. Detecting SLA violation in today's complex computing infrastructures, such as clouds infrastructures, presents challenging research issues \cite{EFNBD12}. However no careful analysis of this problem has been conducted.  Our work may provide an avenue to doing such.

During the review process a paper related to ours appeared \cite{YB2020}. In \cite{YB2020}  Alice's jobs arrive continuously according to a Poisson process with rate $\lambda_b$.  It is shown that if Willie knows the number of his jobs successively served in a busy period and $\lambda_b$ lies below a certain threshold, then the expected number of jobs that Alice can covertly insert over $n$ busy periods is $\mathcal{O}(\sqrt{n})$. If instead Willie knows the length of each busy period instead, the expected number of jobs that Alice can  covertly insert is only $\mathcal{O}(1)$. 


\section{Model and Background}
\label{sec:model}

This section gives details about the model we use in the present work and the needed background on hypothesis testing.
As mentioned in the introduction, there is a legitimate user, Willie, who sends a sequence of jobs to a single server with known service rate.  There is also an illegitimate user, Alice, who wants to introduce a sequence of jobs to be serviced.  
The questions that we address are the following: can Alice covertly introduce her stream of jobs, i.e. without Willie being able to tell with confidence whether she has introduced the stream or not, and if so, at what rate can she introduce her jobs?  We answer these questions under the following assumptions:
\begin{enumerate}
\item Willie  jobs arrive at the server according to a Poisson process with rate $\lambda\in (0,\infty)$;
\item the service times of all jobs are independent;
\item the service time distributions are known to both parties;
\item the server serves all jobs in a FIFO manner; 
\item once in service, Alice  jobs cannot be preempted;
\item Willie observes only his arrivals and departures;
\item Alice observes Willie's and her own arrivals and departures.
\end{enumerate}
The first four assumptions are made mainly for tractability. If Alice jobs can be preempted whenever a Willie job arrives, then  Alice can hide her jobs during Willie's idle periods
without affecting his jobs. Consequently, we make the fifth assumption to make the problem interesting. Note that allowing Alice to also observe Willie arrivals and departures gives her the capability to identify idle periods within which to hide her jobs.

Assumption 6 implies that Willie does not know the state of the server. If he does then Alice can only transmit during busy periods. However, if the scheduling policy is FIFO
(Assumption 4)  Alice cannot know if an inserted job of hers will not be the last one of the busy period, in which case it will be detected by Willie; relaxing the FIFO assumption
appears to be very challenging.

Assume that the system is empty at time $0$.
Denote by $A_i$ and $D_i$ the arrival and departure times of Willie's  $i$-th job, respectively, for $i\geq 1$.  We assume that $D_0=0$.
Note that $0<A_i < A_{i+1}$ and $0 < D_i < D_{i+1}$. Let $A_{1:m} = \{A_1, \dots, A_m\}$, $D_{1:m}=\{D_1, \dots, D_m\}$. Let $S_{1:m}=(S_1, \ldots, S_m)$ denote the
{\em reconstructed service times} of the first $m$ jobs, which satisfy the following recurrence relation,
\begin{equation}
\label{reconstructed-services}
S_i = 
\begin{cases}
D_1 - A_1, & i=1, \\
D_i-\max\{A_i,D_{i-1}\}, & i\ge 2.
\end{cases}
\end{equation}
These are the service times perceived by Willie. Note that $(A_{1:m}, S_{1:m})$ and $(A_{1:m},D_{1:m})$ contain the same information, as they uniquely determine each other. It is also all the information available to Willie in our model. 

We define a {\em Willie Busy Period} (W-BP) to be the time interval between the arrival of a Willie job that finds no other Willie job in the system and the first subsequent departure of a Willie job that leaves no other Willie jobs in the system. Let $M_j$ denote the number of Willie's jobs served in the first $j$ W-BPs, which can be defined recursively by $M_0 = 0$ and 
\begin{eqnarray}
M_j=\min\left\{i > M_{j-1} : A_{i+1}>D_i \right\} , \quad j\geq 1.
\label{nb-cust-BP}
\end{eqnarray}
Let $N_j=M_j-M_{j-1}$ denote the number of Willie jobs served in the $j$-th W-BP.

Willie's observation is $W_{1:n}$, where 
\begin{equation}
\label{Wj}
W_j:=\left(M_j=m_j ,A_{(m_{j-1}+1):m_j}=a_{(m_{j-1}+1):m_j}, S_{(m_{j-1}+1):m_j}= s_{(m_{j-1}+1):m_j}\right), \quad j=1,\ldots,n.
\end{equation}
In words, Willie observes $n$ W-BPs and for each W-BP records the number of his jobs that have been served, their arrival times and reconstructed service times.\\

The null hypothesis $H_0$ is  that Alice does not insert jobs and the alternative hypothesis $H_1$is  that Alice inserts jobs.  Willie's test may incorrectly accuse Alice when she does not insert jobs, i.e. he rejects $H_0$ when it is true. This is known as type I error or false alarm, and,  its probability is denoted by $P_{FA}$ \cite{LR05}.  On the other hand, Willie's test may fail to detect insertions of Alice's jobs, i.e. he accepts $H_0$ when it is false. This is known as type II error or missed detection, and  its probability is denoted by $P_{MD}$. Assume that Willie uses classical hypothesis testing with equal prior probabilities of each hypothesis being true.  Then, the lower bound on the sum $P_E=P_{FA}+P_{MD}$ characterizes the necessary trade-off between false alarms and missed detections in the design of a hypothesis test.  If prior probabilities are not equal, $\Prob(H_0) = \pi_0$ and $\Prob(H_1) = \pi_1$, then, $P_E \ge \min (\pi_0,\pi_1)(P_{FA}+P_{MD})$ \cite[Sec. V.B]{BGT12}. Hence scaling results obtained for equal priors apply to the case of non-equal priors and we focus on the former in the remainder of the paper.
\begin{figure}
\begin{tabular}{| c | l | l }
\hline
General&\\
\hline
$\lambda $& arrival rate Willie jobs\\
 $G_1$, $g_1$ &cdf, pdf Willie job service time  \\
$G_2$,  $g_2$  &cdf, pdf Alice job service time  \\
 $1/\mu_1$ &Willie job expected service time\\
 $1/\mu_2$ &  Alice job expected service time\\
 $H_0$ & null hypothesis (Alice does not insert jobs)\\
 $H_1$ & alternate hypothesis (Alice inserts jobs)\\
  $T_V(u_0,u_1)$ & variation distance between pdfs $u_0$ and $u_1$\\
 $H(u_0,u_1)$&  Hellinger distance between pdfs $u_0$ and $u_1$\\
 $X$ & random variable (rv) with pdf $g_1$\\
 \hline\hline
 IEBP policy&\\
 \hline
 W-BP & Willie Busy Period\\
 $Y$ & Willie first job reconstructed service time\\
 $V$ & Willie idle period duration (exp. rate $\lambda$)\\
 $f_i$ & pdf of $(Y,V)$ under $H_i$, $i=0,1$  \\
  & ($f_0(y,v)=g_1(y)\lambda e^{-\lambda v}$)\\
  $q$ & probability Alice inserts a job ($\bar q=1-q$);\\
  &depends on $n$, the number of W-BPs\\
 $Z(q,y,v)$ & $f_1(y,v)/f_0(y,v)$\\
 $C_0$ & Fisher information at origin \\
 & (=$\E[\rho(X,V)^2]$, with $\rho(x,v)$ defined in (\ref{eq:rho}))\\
 $\widetilde f_i$ & pdf of $Y$ under $H_i$, $i=0,1$ ($\widetilde f_0(y)=g_1(y)$)\\
 $\widetilde Z(q,y)$ & $\widetilde f_1(y)/\widetilde f_0(y)$ \\
 $\mu r$ & $\mu_1$\\
 $\mu$ & $\mu_2$\\
 $X_r$ & exponential rv, rate $\mu r$\\
 $N_j$ & nb. Willie jobs served in $j$-th W-BP\\
 $M_j$ & nb. Willie jobs served in first $j$ W-BPs ($M_j=\sum_{l=1}^j N_l$)\\
 $T(n)$ & expected nb. Alice jobs inserted in $n$ W-BPs\\
 & ($T(n)=nq$)\\
 $T_W(n)$ &expected nb. of Willie jobs served in $n$ W-BPs\\
 \hline
 \end{tabular}
 \caption{Glossary of main notations}
 \label{fig:notation}
 \end{figure}


\section{Insert-at-End-of-Busy-Period Policy}
\label{sec:IEBP}
In this section, we consider the strategy that Alice inserts a job probabilistically at the end of each W-BP, which we call the \emph{Insert-at-End-of-Busy-Period (IEBP) Policy}.
Note that there may be an Alice job in the system at the start of a W-BP.  This occurs when Alice has inserted a job at the end of a W-BP and that her job has not completed 
service by the time the next Willie job arrives. 

Throughout the section, we assume that Willie jobs have service time distribution $G_1$ with continuous pdf $g_1$ and finite mean $1/\mu_1>0$, and that Alice jobs have service time distribution $G_2$ with continuous pdf $g_2$ and finite mean $1/\mu_2>0$. Denote by $G_i^*(s)=\int_0^\infty e^{-s x} g_i(x)dx$ the Laplace Stieltjes transform (LST) of $g_i$ for $i=1,2$. We also assume $\lambda/\mu_1<1$ so that the system is stable under $H_0$.

\subsection{Introducing the IEBP Policy}
\label{subsec:IEBP}

To motivate the IEBP policy, we first find the minimum probability that an Alice job interferes with Willie's jobs. Suppose an Alice job is inserted at time $t$, with service time $\sigma_2\sim G_2$. Let $U_t\geq 0$ be the unfinished work (of both Alice and Willie jobs) in the system just before time $t$. The newly inserted Alice job will affect Willie if he sends a job in the interval  $(t,t+U_t+\sigma_2)$, the probability of which is
\begin{align}
 &\quad \; \P(\hbox{at least one Willie job arrives in }(t,t+U_t+\sigma_2 )) \nonumber \\
 &  \ge  \P(\hbox{at least one Willie job arrives in }(t,t+\sigma_2 )) \nonumber \\
 & =  \int_0^\infty  \P(\hbox{at least one Willie job arrives in }(t,t+x))g_2(x)dx\nonumber\\
 & =  \int_0^\infty  (1-e^{-\lambda x})g_2(x) dx =  1-G_2^*(\lambda):=p.
 \label{eq:interference-prob}
\end{align}
Thus if Alice is to insert a single job then she should insert it when the system is idle so as to minimize the probability of interfering with a Willie job.  Motivated by this observation, we introduce the IEBP policy below.\\

\noindent{\em Alice's strategy.}  Alice inserts a job with probability $q$ at the end of each W-BP.  
We refer to this as the {\em Insert-at-End-of-Busy-Period} (IEBP) policy. Given that Alice does insert a job, the probability that it interferes with a Willie job is
given by $p$ in \eqref{eq:interference-prob}.  Thus $pq$ is the probability that an interference occurs in a given W-BP.

\subsection{Willie's Detector}
\label{ssec:detector}

It is not easy to work directly with the observation process $W_{1:n}$  defined in (\ref{Wj}) (Section \ref{sec:model}). Instead, we will work with the statistic
$(Y_{1:n},V_{1:n})$, with $Y_j$ denoting the reconstructed service time of the first Willie job in the $j$-th W-BP and $V_j$ the length of the idle period preceding it.
These quantities are given by
\begin{eqnarray}
Y_j &=& S_{M_{j-1}+1}, \label{def-Yj}\\
V_j &= &A_{M_{j-1}+1} - D_{M_{j-1}}.\label{def-Vj}
\end{eqnarray}

Denote by $f_i(y,v)$ the joint pdf of $(Y,V)$ at $(Y=y, V=v)$ under $H_i$ for $i=0,1$.  Also, let $\widetilde f_i(y):=\int_0^\infty f_i(y,v)dv$ denote
the pdf of $Y$ at $Y=y$ under $H_i$ for $i=0,1$.  Under $H_0$ the system is a standard $M/G/1$ queue; in particular, the random variables (rvs) $Y$ and $V$ are independent 
with pdf $g_1(y)$ and $\lambda e^{-\lambda v}$, respectively, yielding
\begin{eqnarray}
f_0(y,v)&=&g_1(y)\lambda e^{-\lambda v},
\label{value-f0}\\
\widetilde f_0(y)&=&g_1(y).
\label{f0tilde}
\end{eqnarray}
Under $H_1$, $Y_j$ is the sum of the remaining service time of Alice's job, if any, when a Willie job  initiates the $j$-th  W-BP,
and of the service time of this Willie's job. Therefore, under $H_1$ the system is an M/G/1 queue with arrival rate $\lambda$, exceptional first service time in a busy period with
pdf $\widetilde f_1$, and all other  service times (the ordinary customers) in a busy period with  pdf $g_1$.  In  \prettyref{prop:Tw} (Section \ref{sec:main-results})
we derive some performance metrics of interest for this queueing system.

An important feature of the  process $(Y_{1:n}, V_{1:n})$ is that $(Y_1,V_1), \ldots, (Y_n,V_n)$ form iid rvs due to the Poisson nature of Willie job arrivals and
the assumptions that Willie's and Alice's service times are mutually independent processes, further independent of the arrival process. This is the main benefit from
using the statistic $(Y_{1:n}, V_{1:n})$ instead of the statistic $W_n$. From now on  $(Y,V)$ denotes a generic $(Y_j, V_j)$.

The independence of the rvs $(Y_1,V_1), \ldots,(Y_n,V_n)$ under $H_i$ ($i=0,1$) implies that their joint pdf is given by $f_i^{\otimes n}$, the $n$-th fold tensor product of $f_i$ with itself.

The following lemma  shows that $(Y_{1:n},V_{1:n})$ is a {\em sufficient statistic} (e.g. see  \cite[Chapter 1.9]{LR05}), that is, Willie does not lose any information by considering the statistic 
$(Y_{1:n},V_{1:n})$ instead of the statistic $W_{1:n}$  in order to detect the presence of Alice. The proof is given in Appendix \ref{app:sufficient-statistics}.

\begin{lemma} 
\label{lemma:likelihood}
For every $n\geq 1$, $(Y_{1:n},V_{1:n})$ is a sufficient statistic. 
\end{lemma}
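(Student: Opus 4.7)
The plan is to invoke the Neyman--Fisher factorization theorem: $(Y_{1:n},V_{1:n})$ is sufficient provided that, for each hypothesis $H_i$ ($i=0,1$), the joint density of $W_{1:n}$ factors as $h(W_{1:n})\,k_i(Y_{1:n},V_{1:n})$ with $h$ free of $i$. Because Willie's arrival process is Poisson, the service times are mutually independent, and under IEBP Alice's insertion decisions in distinct cycles are taken independently, the W-BP records $W_1,\dots,W_n$ are iid under both hypotheses. So it is enough to establish the factorization for a single W-BP and then multiply.

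For a fixed W-BP, I would use the bijective reparametrization $W_j \leftrightarrow (V_j, Y_j, R_j)$, where
\[
R_j \;=\; \bigl(N_j,\; A_{(M_{j-1}+2):M_j},\; S_{(M_{j-1}+2):M_j}\bigr)
\]
is the residual cycle data beyond the initial idle period $V_j$ and first reconstructed service $Y_j$. The central step is to show that the conditional law of $R_j$ given $(V_j, Y_j)$ is identical under $H_0$ and $H_1$. The reason: once the first Willie job in W-BP $j$ departs at time $A_{M_{j-1}+1}+Y_j$, any Alice job inserted at the end of the previous W-BP has certainly finished (IEBP inserts at most one job per cycle); therefore every remaining Willie job in W-BP $j$ is served with its true $G_1$ service time with no Alice interference, and the subsequent Willie arrivals continue to form a rate-$\lambda$ Poisson process independent of everything that precedes. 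Moreover, the Willie arrivals on $[A_{M_{j-1}+1},\,A_{M_{j-1}+1}+Y_j]$ form a Poisson process on a window of length $Y_j$ whose law depends on $(V_j, Y_j)$ only through the numerical value $Y_j$ --- not on whether $Y_j$ equals a single $G_1$ service (as under $H_0$) or a convolution with a residual Alice service (as under $H_1$). The stopping rule $N_j = \min\{k\ge 1 : A_{M_{j-1}+k+1} > D_{M_{j-1}+k}\}$ together with these pieces determines a hypothesis-free kernel $h(R_j \mid Y_j, V_j)$.

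Assembling the ingredients, under $H_i$ one obtains
\[
p_i(W_{1:n}) \;=\; \Bigl[\,\prod_{j=1}^{n} h(R_j \mid Y_j, V_j)\,\Bigr]\; f_i^{\otimes n}(Y_{1:n}, V_{1:n}),
\]
and the Neyman--Fisher theorem yields sufficiency of $(Y_{1:n}, V_{1:n})$. The main obstacle I anticipate is the rigorous execution of the conditional-law step: one must carefully invoke the independent-increments property of the Poisson arrival process to peel off the arrivals during the first service window, and argue that under $H_1$, where $Y_j$ is a convolution of a (possibly zero) residual Alice service with a Willie service, the post-$Y_j$ evolution depends on $Y_j$ only through its numerical value. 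Writing out the explicit densities --- Poisson likelihoods on the relevant windows, products of $g_1$ densities for the remaining true services, and indicators enforcing the busy-period stopping rule --- should make the factorization transparent and complete the proof.
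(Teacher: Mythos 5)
Your proof is correct and follows essentially the same route as the paper's: both apply the factorization criterion after decomposing the per-cycle observation $W_j$ into $(Y_j,V_j)$ and a residual whose conditional law given $(Y_j,V_j)$ is identical under $H_0$ and $H_1$, because after the first Willie departure in a W-BP there is no further Alice interference (IEBP inserts at most one job per cycle) and the M/G/1 busy-period continuation depends on the first service only through its realized value. The one cosmetic difference is that the paper obtains the factorization via the chain rule for the joint density of $W_{1:n}$ rather than by asserting that the $W_j$ are iid --- a claim that is not literally true since $W_j$ records absolute arrival times, though the cross-cycle conditional independence it is meant to express is precisely what the paper's chain-rule step makes rigorous.
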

Theorem 13.1.1 in \cite{LR05} is established in the case where a  simple hypothesis $P_0$ is tested against a simple alternative $P_1$. In our setting, $q=0$ is the simple hypothesis against the simple alternative $q=q(n)$, and we are asking how we can scale $q(n)$ down to 0 so that we can not differentiate $q=0$ and $q=q(n)$, with $n$ the number of observed W-BPs.

\begin{thm}(\cite[Theorem 13.1.1]{LR05})\hfill
\label{thm:optimal-test}

Using the observed values $(y_{1:n}, v_{1:n})$ of $(Y_{1:n},V_{1:n})$,
any test accepting $H_0$ if $\prod_{i=1}^n f_0(y_j,v_j) > \prod_{i=1}^n f_1(y_j,v_j) $ and rejecting $H_0$ if 
$\prod_{i=1}^n f_0(y_j,v_j) <\prod_{i=1}^n f_1(y_j,v_j)$ minimizes $P_E$. Furthermore, the minimum $P_E$ is given by
\[
P_E^\star = 1 - T_V \left( f_0^{\otimes n},  f_1^{\otimes n}\right),
\]
where
\begin{equation}
\label{eq:total-variation}
	T_V (u_0,u_1) = \frac{1}{2}\int |u_0(x) - u_1(x)|dx
\end{equation}
is the total variation distance between two distributions with densities $u_0$ and $u_1$, respectively.
\end{thm}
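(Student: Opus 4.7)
The plan is to prove the theorem in two stages: first, establish that the likelihood ratio test (LRT) is a minimizer of $P_E = P_{FA}+P_{MD}$ among all deterministic tests by a pointwise optimization argument, and then compute the resulting minimum $P_E^\star$ in closed form by recognizing the integrand as $\min(f_0^{\otimes n}, f_1^{\otimes n})$ and applying an elementary identity to convert it into a total variation distance.

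For the first stage, parameterize an arbitrary deterministic test by its acceptance region $R_0 \subset \R^{2n}_{\ge 0}$ for $H_0$, with rejection region $R_1 = \R^{2n}_{\ge 0} \setminus R_0$. Then
\[
P_{FA} = \int_{R_1} f_0^{\otimes n}(y_{1:n},v_{1:n})\, dy_{1:n}\, dv_{1:n}, \qquad P_{MD} = \int_{R_0} f_1^{\otimes n}(y_{1:n},v_{1:n})\, dy_{1:n}\, dv_{1:n},
\]
so that
\[
P_E \;=\; \int_{R_0} f_1^{\otimes n} \;+\; \int_{R_1} f_0^{\otimes n} \;=\; \int f_0^{\otimes n} \;+\; \int_{R_0} (f_1^{\otimes n} - f_0^{\otimes n}).
\]
Since the first term equals $1$, $P_E$ is minimized by taking $R_0$ to contain exactly those points at which the integrand $f_1^{\otimes n}-f_0^{\otimes n}$ is negative, i.e.\ $R_0^\star = \{f_0^{\otimes n} > f_1^{\otimes n}\}$ (points of equality may be assigned to either region without affecting $P_E$). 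This is precisely the LRT described in the statement. A standard convexity argument shows that no randomized test can strictly improve on this deterministic minimizer, since a randomized decision is an average of deterministic ones and the optimum is attained at an extreme point.

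For the second stage, substituting the optimal region yields
\[
P_E^\star \;=\; \int_{R_0^\star} f_1^{\otimes n} + \int_{R_1^\star} f_0^{\otimes n} \;=\; \int \min\!\bigl(f_0^{\otimes n}, f_1^{\otimes n}\bigr)\, dy_{1:n}\, dv_{1:n}.
\]
The elementary identity $\min(a,b) = \tfrac{1}{2}(a+b) - \tfrac{1}{2}|a-b|$ valid for $a,b\ge 0$, together with the fact that $f_0^{\otimes n}$ and $f_1^{\otimes n}$ are probability densities (each integrating to $1$), gives
\[
P_E^\star \;=\; \tfrac{1}{2}\!\int\! \bigl(f_0^{\otimes n} + f_1^{\otimes n}\bigr) - \tfrac{1}{2}\!\int\! \bigl|f_0^{\otimes n} - f_1^{\otimes n}\bigr| \;=\; 1 - T_V\!\bigl(f_0^{\otimes n},f_1^{\otimes n}\bigr),
\]
where the last equality uses the definition \eqref{eq:total-variation}.

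The only real subtlety, and thus the main obstacle, lies in being careful about the set $\{f_0^{\otimes n} = f_1^{\otimes n}\}$ and in justifying that randomization does not help; both are handled by the pointwise structure of the minimization above, and the tie set contributes nothing to any of the integrals. Everything else is routine.
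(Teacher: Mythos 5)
Your proof is correct, but note that the paper does not actually prove this theorem — it is stated as a black-box citation to Lehmann and Romano's textbook (\cite[Theorem 13.1.1]{LR05}), with no proof appearing anywhere in the paper or its appendices. What you have written is the standard textbook argument: reduce $P_E$ to $1+\int_{R_0}(f_1^{\otimes n}-f_0^{\otimes n})$, optimize pointwise to obtain the likelihood-ratio acceptance region, observe that the optimal randomized test is deterministic because the objective is linear in the randomization function, and then convert $\int\min(f_0^{\otimes n},f_1^{\otimes n})$ into $1-T_V$ via $\min(a,b)=\tfrac12(a+b)-\tfrac12|a-b|$. All steps are sound and complete, and the argument coincides with the classical Neyman--Pearson-type derivation one finds in the cited reference; there is nothing to reconcile with the paper beyond noting that the authors chose to cite rather than reprove.
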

We will henceforth assume that for a given $(Y_{1:n},V_{1:n})$  Willie uses the above optimal test.

We say that Alice's insertions are covert provided that, for any $\epsilon>0$, she has an insertion strategy for each $n$ such that
\begin{equation}
\label{covert-criterion}
\liminf_n P_E^\star \ge 1-\epsilon,
\end{equation}
or equivalently from Theorem \ref{thm:optimal-test}, if for any $\epsilon >0$,
\begin{equation}
\label{covert-criterion2}
\limsup_n  T_V \left( f_0^{\otimes n}, f_1^{\otimes n} \right)\leq \epsilon.
\end{equation}
Note that a sufficient condition for Alice's insertions not being covert is that for some $\delta\in (0,1)$ there exists a detector such that 
\begin{equation}
\label{non-covert-criterion}
\limsup_n P_E<\delta.
\end{equation}
Here the limit is taken over the number of busy periods that Willie observes. This covertness criterion was proposed in the context of low probability of detection (LPD) communications in \cite{BGT12}. 
\\

Theorem \ref{thm:optimal-test} suggests using the total variation distance to analyze Willie's detectors. However, the total variation distance is often unwieldy even for 
products of pdfs, like  $f_0^{\otimes n}$ and $f_1^{\otimes n}$.
To overcome this drawback, it is common (e.g. see \cite{BGT12})  to use the following Pinsker's inequality (Lemma 11.6.1 in \cite{CT02})
\begin{equation}
\label{Pinsker-inq}
T_V(u_0,u_1)\leq KL(u_0\| u_1),
\end{equation}
where $KL(u_0 \| u_1) :=\int_{\R^d} u_0(x)\ln  \frac{u_0(x)}{u_1(x)} dx$
is the {\em Kullback-Leibler (KL) divergence} between the probability distributions with pdf $u_0$ and $u_1$, respectively. 

However, we will work with the Hellinger distance, which has the advantage of offering both lower  and  upper bounds on the total variation distance.
The Hellinger distance between two probability distributions with pdf $u_0$ and $u_1$ respectively,
denoted $H(u_0,u_1)$,  is defined by 
\begin{equation} \label{eq:Hellinger-def}
H(u_0,u_1)=\frac{1}{2}\int_{\R^d}\left(\sqrt{u_0(x)}-\sqrt{u_1(x)}\right)^2 dx.
\end{equation}
Note that 
\begin{equation}
\label{affinity-H}
H(u_0,u_1)=1-\int_{\R^d}\sqrt{u_0(x)u_1(x)}dx,
\end{equation}
and $0\leq H(u_0,u_1)\leq 1$. 
It is known \cite[Lemma 4.1]{Kraft-1955} that
\begin{equation}\label{eq:lb-ub}
H(u_0,u_1)\leq T_V(u_0,u_1)\leq \sqrt{2 H(u_0,u_1)}.
\end{equation}
The upper bound (resp. lower bound) in (\ref{eq:lb-ub}) will be used to establish covert (resp. non-covert) results.

We will also use the following well-known property of the Hellinger distance between pdfs $u_0^{\otimes n}$ and $u_1^{\otimes n}$ \cite[Eq. (1.4)]{Oosterhoof-75}:
\begin{eqnarray}
H\left(u_0^{\otimes n},u_1^{\otimes n}\right)
&=&1-\int_{\R^d}\prod_{j=1}^n \sqrt{u_0(x_j) u_1(x_j)}\,\prod_{j=1}^n dx_j \quad \hbox{by (\ref{affinity-H})}\nonumber\\
&=&1-\left(\int_{\R^d}\sqrt{u_0(x) u_1(x)} dx\right)^n.
\label{eq:H-alt}
\end{eqnarray}


\section{Main Results }
\label{sec:main-results}

Let $T(n)$ denote the expected number of jobs that Alice inserts in $n$ W-BPs. Under the IEBP policy,
\begin{equation}
\label{value-Tn}
T(n)=nq. 
\end{equation}
This section presents the main results that characterize $T(n)$ under various conditions as $n$ becomes large. Implicit in all asymptotic results as $n\to\infty$ is 
that $q$ is a function of $n$.\\ 

Recall that $f_i(y,v)$ is the joint pdf of $(Y,V)$ at $(Y=y, V=v)$ under $H_i$ for $i=0,1$, with $f_0$ given in (\ref{value-f0}).
The likelihood ratio 
\begin{equation}
\label{eq:Z-def}
Z(q,y,v):=\frac{f_1(y,v)}{f_0(y,v)},
\end{equation}
plays an important role in determining how many jobs Alice can insert covertly.  It is shown in Lemma \ref{lem:Z-rho} in Appendix \ref{app:Zqxv}
that $Z$ has the following form,
\begin{equation}
Z(q,y,v) = 1 + q \rho(y,v), 
\label{eq:Z-def-with-rho}
\end{equation}
where
\begin{equation}
\label{eq:rho}
\rho(y,v):=\frac{1}{g_1(y)}\int_0^y g_1(u)g_2(v+y-u)du -\overline G_2(v).
\end{equation}
Since  $\rho(y,v)$ does not depend on the insertion probability $q$, this shows that the likelihood ratio $Z(q,y,v)$ depends linearly on $q$.
Define
\begin{equation}
\label{eq:C0}
C_0:=\E[\rho(X,V)^2],
\end{equation}
where $(X,V)$ has pdf $f_0(x,v)$ at $(x,v)$.

It is worth noting that $C_0=J(0)$, with $J(q):=\E\left[\left(\frac{d}{dq} \log f_1(X,V)\right)^2\right]$
the Fisher information of $(Y,V)$ about the parameter $q$. Indeed, since $f_1(x,v)= f_0(x,v)(1+q\rho(x,v))$ by (\ref{eq:Z-def})-(\ref{eq:Z-def-with-rho}), we have
\[
J(q)= \int \frac{1}{f_1(x,v)^2} \left(\frac{d}{dq} f_1(x,v)\right)^2 f_0(x,v) dx dv= \int  \frac{\rho(x,v)^2}{(1+q\rho(x,v))^2} f_0(x,v)dx dv,
\]
and therefore $J(0)=\E[\rho(X,V)^2]$.
The Fisher information evaluates the amount of information that a random variable carries about an unknown parameter  \cite{Kullback-68}.

\prettyref{prop:achievability-IEBP} gives the covert throughput for general  service time distributions. Its proof is given in \prettyref{sec:achievability-IEBP-proof}. 

\begin{proposition}[Covert throughput for general service time distr. and finite $C_0$] \hfill
\label{prop:achievability-IEBP}
Assume $C_0 < \infty$. Under the IEBP policy, the number of jobs Alice can insert covertly is  $T(n)=\mcO(\sqrt{n})$ if \,$\E[\rho(X,V)]=0$, and $T(n) = \mcO(1)$ if \,$\E[\rho(X,V)]\ne 0$.
\end{proposition}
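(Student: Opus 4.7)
The plan is to reduce the covertness criterion (\ref{covert-criterion2}) to a bound on the single-sample Hellinger distance $H(f_0,f_1)$, and then Taylor-expand around $q=0$. By the upper bound in (\ref{eq:lb-ub}), the tensorization formula (\ref{eq:H-alt}), and the elementary inequality $1-(1-t)^n\le nt$ valid for $t\in[0,1]$,
\[
T_V\bigl(f_0^{\otimes n},f_1^{\otimes n}\bigr)\;\le\; \sqrt{2\,H\bigl(f_0^{\otimes n},f_1^{\otimes n}\bigr)} \;=\; \sqrt{2\bigl(1-(1-H(f_0,f_1))^n\bigr)} \;\le\; \sqrt{2n\,H(f_0,f_1)}.
\]
Hence it suffices to choose $q=q(n)$ so that $n\,H(f_0,f_1)\le\epsilon^2/2$ for any prescribed $\epsilon>0$; the size of $q(n)$ then determines $T(n)=nq(n)$.

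Next I would expand $H(f_0,f_1)$ using the representation $f_1=f_0(1+q\rho)$ from \eqref{eq:Z-def-with-rho}. By (\ref{affinity-H}),
\[
H(f_0,f_1) \;=\; 1 - \E_{f_0}\!\left[\sqrt{1+q\rho(Y,V)}\right].
\]
The key algebraic tool is the identity
\[
1+\tfrac{u}{2}-\sqrt{1+u} \;=\; \frac{u^{2}/4}{1+u/2+\sqrt{1+u}},\qquad u\ge -1,
\]
whose denominator is at least $1/2$ on $u\in[-1,0]$ and at least $2$ on $u\ge 0$, yielding the uniform pointwise bound $0\le 1+u/2-\sqrt{1+u}\le u^{2}/2$. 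Since $1+q\rho\ge 0$ a.s.\ (because $f_1\ge 0$), substituting $u=q\rho(Y,V)$ and taking expectations gives
\[
H(f_0,f_1) \;=\; \E\!\left[1+\tfrac{q\rho(Y,V)}{2}-\sqrt{1+q\rho(Y,V)}\right] - \tfrac{q}{2}\,\E[\rho(X,V)] \;\le\; \tfrac{q^{2}C_0}{2} - \tfrac{q}{2}\,\E[\rho(X,V)].
\]

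To conclude I would pick $q(n)$ according to the two cases. If $\E[\rho(X,V)]=0$, then $nH(f_0,f_1)\le nq^{2}C_0/2$; choosing $q(n)=\epsilon/\sqrt{nC_0}$ gives $nH\le\epsilon^{2}/2$ and hence $T_V\le\epsilon$, so $T(n)=nq(n)=\mathcal{O}(\sqrt{n})$. If instead $\E[\rho(X,V)]\ne 0$, the linear term in $q$ dominates the bound; taking $q(n)=c/n$ for a small constant $c>0$ yields $nH(f_0,f_1)\le c^{2}C_0/(2n)+c|\E[\rho(X,V)]|/2$, which can be made $\le\epsilon^{2}/2$ by choosing $c$ sufficiently small, giving $T(n)=c=\mathcal{O}(1)$.

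The main technical obstacle is the uniform quadratic bound $1+u/2-\sqrt{1+u}\le u^{2}/2$ for all $u\ge-1$: since $\rho(Y,V)$ is not assumed bounded (only square-integrable via $C_0<\infty$), a purely local expansion near $u=0$ would not suffice, and one must keep quadratic control globally through the algebraic identity above. As a consistency check, the identity $\int f_1 = \int f_0(1+q\rho) = 1$ forces $\E[\rho(X,V)]=0$ automatically, so in the present IEBP setting only the first case actually arises; the second case is stated for generality.
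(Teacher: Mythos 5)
Your proof is correct, and it reaches the same $\mcO(\sqrt{n})$ (respectively $\mcO(1)$) bound as the paper, but by a genuinely different decomposition. The paper's Lemma~\ref{lem:up-TV} works directly with the total-variation integral: it applies $\E|U|\le\sqrt{\E[U^2]}$ to $U=1-\prod_j Z(q,X_j,V_j)$, computes $\E Z$ and $\E Z^2$ from \eqref{eq:Z-def-with-rho}, and arrives at $T_V\bigl(f_0^{\otimes n},f_1^{\otimes n}\bigr)\le \tfrac12\sqrt{(1+q^2C_0)^n-1}$; the proposition then follows via the asymptotic estimate \eqref{inq-achiev} with $q=\delta/\phi(n)$, plus a separate exponential bound for the case $\E[\rho(X,V)]\ne 0$. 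You route through the Hellinger distance instead: tensorize via \eqref{eq:H-alt}, apply $1-(1-t)^n\le nt$, and control the single-sample Hellinger distance via the exact algebraic identity $1+\tfrac{u}{2}-\sqrt{1+u}=\tfrac{u^2/4}{\,1+u/2+\sqrt{1+u}\,}$ on $u\ge -1$, whose denominator is bounded below by $\tfrac12$, giving the uniform quadratic bound $1+u/2-\sqrt{1+u}\le u^2/2$. This global quadratic control is precisely the right substitute for a merely local Taylor expansion when $\rho$ is unbounded but square-integrable; both arguments use nothing beyond $C_0<\infty$, and the resulting constants are of the same order, so the choice between them is a matter of taste.

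Your closing consistency check is more than a stylistic aside --- it exposes an error in the paper. Since $f_1(y,v)=f_0(y,v)\bigl(1+q\rho(y,v)\bigr)$ by \eqref{eq:Z-def}--\eqref{eq:Z-def-with-rho}, and $f_0,f_1$ are both probability densities, integrating over $(y,v)$ gives $q\,\E[\rho(X,V)]=0$, hence $\E[\rho(X,V)]=0$ for all $q>0$ and for \emph{every} choice of $g_1,g_2$ under IEBP. The same holds conditionally on $V$: using the change of variable $w=y-u$ in \eqref{eq:rho}, $\int_0^\infty g_1(y)\rho(y,v)\,dy=\int_0^\infty\!\!\int_0^\infty g_1(u)\,g_2(v+w)\,du\,dw-\overline G_2(v)=0$ for each $v\ge 0$. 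The second branch of the proposition is therefore vacuous, and the claim in \prettyref{rmk:rho-exp} that $\E[\rho(X,V)]\ne 0$ when Alice's service times are $k$-Erlang with $k>1$ --- with the dimensionally inconsistent expression $(1-G_2^*(\lambda))\bigl((k\mu_2)^k/\mu_2-1\bigr)$ --- cannot be right.
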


\begin{remark} 
\label{rmk:rho-exp}
$\E[\rho(X,V)]=0$ for any pdf $g_1$ if $g_2$ is the pdf of an exponential or an hyper-exponential rv. Indeed, when $g_2(x)=\mu_2 e^{-\mu_2 x}$, $\rho(x,v)$ in (\ref{eq:rho}) writes
\begin{equation} 
\label{eq:rho-g2-exp}
\rho(x,v)= e^{-\mu_2 v} \left(\frac{(g_1*g_2)(x)}{g_1(x)}-1\right).
\end{equation}
By the independence of $X$ and $V$ and the fact that  $g_1*g_2$ is a pdf,
\begin{equation}
\label{expectation-rho-g2-exp}
\E[\rho(X,V)]=\E[e^{-\mu_2 V}] \cdot \E\left[\left(\frac{(g_1*g_2)(X)}{g_1(X)}-1\right)\right]=0.
\end{equation}
The proof when $g_2$ is the pdf of an hyper-exponential rv is a simple generalization.  
 Note  that  $\E[\rho(X,V)]$ does not always vanish. In particular, $\E[\rho(X,V)]\not =0$ when $G_2$ is an Erlang distribution.  Indeed, when
 $g_2(x)=\frac{(k\mu_2)^k}{(k-1)!} x^{k-1}e^{-k\mu_2 x}$, $k\geq 1$ (Alice service times follow a $k$-Erlang distribution with mean $1/\mu_2$), it is easy to show that for any pdf
 $g_1$, $\E[\rho(X,V)]=(1-G^*_2(\lambda))\left(\frac{(k\mu_2)^k}{\mu_2}-1\right)\not=0$ for  all $k>1$. 
\end{remark}

The next lemma gives conditions for $C_0<\infty$ under various distributional assumptions. Its proof is found in Appendix \ref{app:A}.

\begin{lemma}[Finiteness of $C_0$]\hfill 
\label{lem:C-different-cdf}

\begin{enumerate}
\item Suppose both Alice and Willie have exponential service times, i.e. $g_i(x)=\mu_i e^{-\mu_i x}$ for $i=1,2$.  Then $C_0<\infty$ if and only if $\mu_1<2\mu_2$.

\item Suppose both Alice and Willie have hyper-exponential service times,
 i.e. 
 \[
 g_i(x)=\sum_{l=1}^{K_i} p_{i,l} \mu_{i,l}e^{-\mu_{i,l}x}, 
 \]
 where $\sum_{l=1}^{K_i} p_{i,l} =1$, for $i=1,2$. Then $C_0<\infty$ if and only if
\[
\max_{1\leq l\leq K_i}\mu_{1,l} \leq 2 \min_{1\leq m\leq K_2} \mu_{2,m}.
\]

\item Suppose Willie has Erlang service times and Alice has hyper-exponential service times, i.e.
\[
g_1(x)=\frac{\nu_1^{K_1}}{(K_1-1)!} x^{K_1-1} e^{-\nu_1 x}
\]  
and 
\[
g_2(x)=\sum_{l=1}^{K_2} p_{2,l} \mu_{2,l}e^{-\mu_{2,l}x},
\]
where $\sum_{l=1}^{K_2} p_{2,l} =1$. Then
$C_0<\infty$ if and only if 
\[
\nu_1<2\min_{1\leq l\leq K_2} \mu_{2,l}.
\]
\end{enumerate}
\end{lemma}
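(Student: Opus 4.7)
The plan is to exploit the product form $f_0(x,v)=g_1(x)\lambda e^{-\lambda v}$ to factor $C_0 = \E[\rho(X,V)^2]$ into a finite sum of terms whose $v$-parts are elementary Laplace transforms and whose $x$-parts can be read off directly from the exponential rates of $g_1$ and $g_2$.

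The starting point, valid for all three cases, is a generalization of \prettyref{rmk:rho-exp}: writing $g_2 = \sum_{m=1}^{K_2} p_{2,m}g_{2,m}$ with $g_{2,m}(x)=\mu_{2,m}e^{-\mu_{2,m}x}$ (so Case 1 corresponds to $K_2=1$), the factorization $g_{2,m}(v+x-u) = e^{-\mu_{2,m}v}\mu_{2,m}e^{-\mu_{2,m}(x-u)}$ yields
\[
\rho(x,v) = \sum_{m=1}^{K_2} p_{2,m}\,e^{-\mu_{2,m}v}\,h_m(x), \qquad h_m(x):=\frac{(g_1*g_{2,m})(x)}{g_1(x)} - 1 .
\]
Squaring, integrating against $f_0$, and using independence of $X$ and $V$, $C_0$ becomes a finite linear combination of terms $\E[e^{-(\mu_{2,m}+\mu_{2,m'})V}]\int_0^\infty h_m(x)h_{m'}(x)g_1(x)dx$. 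The $V$-expectations equal $\lambda/(\lambda+\mu_{2,m}+\mu_{2,m'})\in(0,1)$, and by Cauchy--Schwarz together with the positive definiteness of the Cauchy matrix $(\lambda/(\lambda+\mu_{2,m}+\mu_{2,m'}))_{m,m'}$, finiteness of $C_0$ is equivalent to the finiteness of each $I_m:=\int_0^\infty h_m(x)^2 g_1(x)dx$.

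For Case 1, compute $(g_1*g_2)(x)$ in closed form: for $\mu_1\neq \mu_2$ it equals $\tfrac{\mu_1\mu_2}{\mu_2-\mu_1}(e^{-\mu_1 x}-e^{-\mu_2 x})$, so $h(x)$ is a linear combination of $1$ and $e^{(\mu_1-\mu_2)x}$. Multiplying by $g_1(x)=\mu_1 e^{-\mu_1 x}$ and squaring, the integrand is a finite sum of exponentials whose dominant term at infinity is $e^{-(2\mu_2-\mu_1)x}$, giving $I<\infty \iff \mu_1<2\mu_2$. The degenerate case $\mu_1=\mu_2$ uses $(g_1*g_2)(x)=\mu^2 x e^{-\mu x}$, producing a polynomial times $e^{-2\mu x}$, always finite. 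For Cases 2 and 3, compute $(g_1*g_{2,m})(x)$ in closed form by linearity (in Case 2 as a linear combination of $e^{-\mu_{1,k}x}$ and $e^{-\mu_{2,m}x}$; in Case 3, via repeated integration by parts against $g_1(x)\propto x^{K_1-1}e^{-\nu_1 x}$, as a linear combination of $x^j e^{-\nu_1 x}$ for $0\leq j\leq K_1-1$ and $e^{-\mu_{2,m}x}$). Dividing by $g_1(x)$ and identifying the dominant exponential in $h_m(x)^2 g_1(x)$ as $x\to\infty$, integrability translates to an inequality of the form $2\mu_{2,m}$ versus a specific exponential rate of $g_1$, yielding the stated $\max$-versus-$2\min$ condition in Case 2 and $\nu_1<2\min_m\mu_{2,m}$ in Case 3. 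Necessity follows by isolating the dominant exponential and checking its coefficient is nonzero.

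The main obstacle is the bookkeeping in Case 2, where $g_1(x)$ is itself a sum of exponentials and $1/g_1(x)$ admits only the family of elementary upper bounds $1/g_1(x)\leq (p_{1,k}\mu_{1,k})^{-1}e^{\mu_{1,k}x}$ indexed by $k$. The sufficient direction requires pairing the appropriate $k$ with each exponential growth rate appearing in $h_m(x)$, while the necessary direction requires exhibiting a dominant term in $h_m(x)^2 g_1(x)$ whose coefficient does not vanish for any admissible $(k,m)$. Edge cases $\mu_{1,k}=\mu_{2,m}$ (which make the non-degenerate convolution formula undefined) are handled by continuity from the non-degenerate setting.
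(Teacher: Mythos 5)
Your proposal follows essentially the same route as the paper: decompose $\rho(x,v)=\sum_m p_{2,m}e^{-\mu_{2,m}v}h_m(x)$ using the hyper-exponential structure of $g_2$, square, integrate, and reduce the question to finiteness of integrals of the form $\int (g_1*g_{2,l})(x)(g_1*g_{2,m})(x)/g_1(x)\,dx$, then read off the tail behaviour from explicit convolutions. The one genuine difference is how the cross terms are handled: the paper writes $C_0=\alpha_1-2\alpha_2+\alpha_3$, bounds $\alpha_2,\alpha_3\le 1$ directly, and then reduces $\alpha_1<\infty$ term-by-term using nonnegativity of each $\beta_{l,m}$; you instead reduce to the diagonal integrals $I_m=\int h_m^2 g_1$ via Cauchy--Schwarz (finiteness of diagonals $\Rightarrow$ finiteness of off-diagonals) and the positive definiteness of the Cauchy/Gram matrix $\bigl(\lambda/(\lambda+\mu_{2,m}+\mu_{2,m'})\bigr)_{m,m'}$ (finiteness of $C_0$ $\Rightarrow$ finiteness of each diagonal). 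That is a cleaner reduction and genuinely avoids some bookkeeping.

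One point to tighten in your Case 2 sketch: when you write $1/g_1(x)\le (p_{1,k}\mu_{1,k})^{-1}e^{\mu_{1,k}x}$, the bound that is actually tight as $x\to\infty$ is the one indexed by the \emph{smallest} $\mu_{1,k}$, since $g_1(x)\sim p_{1,k^*}\mu_{1,k^*}e^{-\mu_{1,k^*}x}$ with $\mu_{1,k^*}=\min_k\mu_{1,k}$. The dominant exponential rate you extract from $h_m(x)^2g_1(x)$ at infinity therefore pairs $\min_k\mu_{1,k}$, not $\max_k\mu_{1,k}$, against $2\mu_{2,m}$; as written, the sentence ``yielding the stated $\max$-versus-$2\min$ condition'' does not follow from the asymptotics you invoke, and you should carry out this step explicitly and reconcile it with the form of the condition stated in the lemma rather than asserting it.
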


\prettyref{prop:achievability-IEBP} gives sufficient conditions for Alice to be covert. This raises the following questions: 
\begin{itemize}
\item[{\bf Q1:}]
When $C_0<\infty$, can Alice insert covertly more than  $\mcO(\sqrt{n})$ jobs on average during $n$ W-BPs? 
\item[ \bf{ Q2:}] 
When $C_0=\infty$, what is the maximum number of jobs that Alice can insert covertly on average during $n$ W-BPs?
\end{itemize}

We do not have full answers to the above questions. \prettyref{prop:converse-IEBP} first gives a necessary condition for Alice to be covert under IEBP. \prettyref{prop:phase-transition-exponential} then provides a partial answer for the IEBP policy when both Alice and Willie have exponential service times. Proofs are found in Sections \ref{sec:converse-IEBP-proof} and \ref{sec:proof-phase-transition-exponential}.

\begin{proposition}[Necessary condition for covertness]\hfill
\label{prop:converse-IEBP} 

Under IEBP Alice cannot be covert if $\limsup_{n\to\infty} q >0$. 
\end{proposition}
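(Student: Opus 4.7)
The plan is to show that if $q = q(n)$ does not vanish along some subsequence, then Willie's optimal error probability is driven to zero on that subsequence, contradicting the covertness criterion (\ref{covert-criterion}). By Theorem \ref{thm:optimal-test}, $P_E^\star = 1 - T_V(f_0^{\otimes n}, f_1^{\otimes n})$, and combining the lower bound in (\ref{eq:lb-ub}) with the tensor identity (\ref{eq:H-alt}) yields
\[
P_E^\star \le 1 - H\bigl(f_0^{\otimes n}, f_1^{\otimes n}\bigr) = \bigl(1 - H(f_0, f_1)\bigr)^n.
\]
So it suffices to show that $H(f_0, f_1)$ stays bounded away from zero along a subsequence on which $q(n)$ is bounded away from zero.

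From $\limsup_n q(n) > 0$ and compactness of $[0,1]$, I extract a subsequence $q(n_k) \to q^\star \in (0,1]$. Writing $f_1^{(q)} := f_0 \cdot (1 + q\rho)$ to emphasize the dependence on the insertion probability (via (\ref{eq:Z-def-with-rho})),
\[
H\bigl(f_0, f_1^{(q)}\bigr) = \tfrac{1}{2}\int f_0(y,v)\bigl(1 - \sqrt{1 + q\rho(y,v)}\bigr)^2\,dy\,dv.
\]
Because $f_1^{(q)}$ is a legitimate density for every $q \in [0,1]$, one has $\int f_0\rho = 0$ and $\int f_0|\rho| < \infty$ (the latter since $q f_0|\rho| = |f_1^{(q)} - f_0| \le f_0 + f_1^{(q)}$). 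A routine dominated-convergence argument then gives continuity of $q \mapsto H(f_0, f_1^{(q)})$ on $[0,1]$, so that $H(f_0, f_1^{(q(n_k))}) \to H(f_0, f_1^{(q^\star)})$.

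The crux, and the main obstacle, is strict positivity $H(f_0, f_1^{(q^\star)}) > 0$, equivalently that $\rho$ does not vanish almost everywhere on $(0,\infty)^2$. Evaluating (\ref{eq:rho}) at $v = 0$ yields
\[
\rho(y,0) = \frac{(g_1 * g_2)(y)}{g_1(y)} - 1,
\]
and if $\rho \equiv 0$ on the support of $f_0$, then continuity in $v$ forces $(g_1 * g_2)(y) = g_1(y)$ for all $y > 0$. Taking Laplace--Stieltjes transforms gives $G_1^*(s) G_2^*(s) = G_1^*(s)$, i.e.\ $G_2^*(s) \equiv 1$, which contradicts the fact that $g_2$ is a genuine density with finite positive mean $1/\mu_2$. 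Hence $H(f_0, f_1^{(q^\star)}) =: 2\alpha > 0$, so $H(f_0, f_1^{(q(n_k))}) \ge \alpha$ for all large $k$, and consequently $P_E^\star(n_k) \le (1-\alpha)^{n_k} \to 0$. This forces $\liminf_n P_E^\star = 0$, violating (\ref{covert-criterion}) for every $\epsilon \in (0,1)$ and proving the proposition.
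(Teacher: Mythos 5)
Your proof is correct, and it takes a genuinely different route from the paper's. The paper proves this proposition by passing to the \emph{marginal} statistic $Y_{1:n}$ alone, invoking Corollary~\ref{cor:non_covert-Y}: it shows via the equality case of Cauchy--Schwarz that $\E[\sqrt{\widetilde Z(q,X)}]<1$ for every $q\in(0,1]$, uses dominated convergence to get continuity of $q\mapsto\E[\sqrt{\widetilde Z(q,X)}]$, and then feeds a subsequence with $\phi(k_n)\to d<\infty$ into $\bigl(\E[\sqrt{\widetilde Z(\cdot,X)}]\bigr)^n\to 0$. You instead work directly with the sufficient statistic $(Y_{1:n},V_{1:n})$ and with $H(f_0,f_1)$, extracting a subsequence $q(n_k)\to q^\star>0$ and establishing continuity of $q\mapsto H(f_0,f_1^{(q)})$ by dominated convergence. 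The two arguments are conceptually parallel --- both reduce to ``per-sample Hellinger affinity is continuous in $q$ and strictly less than $1$ for $q>0$, so along any subsequence with $q$ bounded away from $0$ the $n$-fold affinity dies off geometrically'' --- but they differ in where the strictness comes from: the paper appeals to the Cauchy--Schwarz equality condition ($\widetilde f_1=c\,\widetilde f_0$ forces $c=1$ and then $q=0$), implicitly leaning on $\widetilde\rho\not\equiv 0$, while you make the non-degeneracy explicit via the Laplace-transform identity $G_1^*G_2^*=G_1^*\Rightarrow G_2^*\equiv 1$, which contradicts $g_2$ being a genuine density. Your route is a bit more self-contained and gives an explicit reason for $\rho\not\equiv 0$; the paper's route has the advantage that Corollary~\ref{cor:non_covert-Y} and the $\widetilde Z$ machinery are reused wholesale in the converse half of Proposition~\ref{prop:phase-transition-exponential}. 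One small point to keep in mind: your step ``$\rho\equiv 0$ forces $(g_1*g_2)(y)=g_1(y)$ for all $y>0$'' is immediate when $g_1>0$ on $(0,\infty)$ (the case in all the paper's concrete examples); for a $g_1$ that vanishes on a set of positive measure one would additionally note that $(g_1*g_2)=g_1$ on $\{g_1>0\}$ together with $\int(g_1*g_2)=\int g_1=1$ and nonnegativity forces $(g_1*g_2)=g_1$ a.e., so the LST step still applies.
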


Consider now the situation when $\lim_{n\to\infty} q=0$. The result below is the main result of the paper.

\begin{proposition}[Covert throughput and converse for exponential service time distr.]\hfill
\label{prop:phase-transition-exponential}
Assume that $g_i(x)=\mu_i  e^{-\mu_i x}$ for $i=1,2$, and Alice uses the IEBP policy with $\lim_{n\to\infty}q=0$.
She can be covert if
\begin{equation}\label{eq:achievability-IEBP-exp}
T(n)=\begin{cases}
\mcO(\sqrt{n}), &\mbox{if $\mu_1<2\mu_2$},\\
\mcO(\sqrt{n/\log n}), &\mbox{if $\mu_1=2\mu_2$},\\
\mcO(n^{\mu_2/\mu_1}),  &\mbox{if $\mu_1>2\mu_2$.} 
\end{cases}
\end{equation}
She cannot be covert if 
\begin{equation}\label{eq:converse-IEBP-exp}
T(n)=\begin{cases}
\omega(\sqrt{n}), &\mbox{if $\mu_1<2\mu_2$},\\
\omega(\sqrt{n/\log n}), &\mbox{if $\mu_1=2\mu_2$},\\
\omega(n^{\mu_2/\mu_1}),  &\mbox{if $\mu_1>2\mu_2$}.
                  \end{cases}
\end{equation}
\end{proposition}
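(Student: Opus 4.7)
The plan is to exploit the independence of $(Y_j, V_j)_{j=1}^n$ across busy periods so that, by \eqref{eq:H-alt}, $H(f_0^{\otimes n}, f_1^{\otimes n}) = 1 - (1-H(f_0,f_1))^n \leq n H(f_0, f_1)$; combined with the sandwich \eqref{eq:lb-ub} this reduces the whole problem to the single-period Hellinger distance $H(f_0, f_1)$ as $q \to 0$. For achievability I will choose $q = q(n)$ so that $n H(f_0,f_1)$ stays bounded, which forces $T_V(f_0^{\otimes n}, f_1^{\otimes n}) \leq \sqrt{2 n H(f_0, f_1)}$ to be arbitrarily small. For the converse I will show that if $T(n) = nq$ exceeds the claimed bounds then $n H(f_0, f_1) \to \infty$; since then $H(f_0^{\otimes n}, f_1^{\otimes n}) \to 1$ and $T_V \geq H$, this forces $P_E^\star = 1 - T_V \to 0$.

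For exponential services, Remark~\ref{rmk:rho-exp} gives $\E_0[\rho(X,V)] = 0$ together with the explicit form $\rho(y,v) = \frac{e^{-\mu_2 v}}{\mu_1 - \mu_2}\bigl(\mu_2 e^{(\mu_1-\mu_2)y} - \mu_1\bigr)$. After the change of variables $U = e^{(\mu_1-\mu_2)Y}$, $W = e^{-\mu_2 V}$, one sees that $\rho(Y,V)$ has a power-law upper tail with exponent $\gamma := \mu_1/(\mu_1 - \mu_2)$, and the three regimes of the proposition correspond precisely to $\gamma > 2$, $\gamma = 2$, $\gamma < 2$. Starting from $H(f_0, f_1) = \E_0[\,1 - \sqrt{1 + q\rho(Y,V)}\,]$, I split the expectation over the sets $\{|q\rho| \le 1\}$ and $\{|q\rho| > 1\}$. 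On the first set the Taylor expansion $\sqrt{1+x} = 1 + x/2 - x^2/8 + O(x^3)$ together with $\E_0[\rho] = 0$ produces a contribution proportional to $q^2 \,\E_0[\rho^2 \bone\{|q\rho|\le 1\}]$; on the second set $\sqrt{1+q\rho} - 1 \asymp \sqrt{q\rho}$ gives a contribution controlled by $\sqrt{q}\,\E_0[\sqrt{\rho}\,\bone\{\rho > 1/q\}]$.

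The Pareto exponent $\gamma$ then drives the scaling. If $\gamma > 2$ ($\mu_1 < 2\mu_2$), the tail is integrable, $C_0 = \E_0[\rho^2] < \infty$, the truncation is harmless, and $H(f_0,f_1) \sim q^2 C_0/8$, recovering Proposition~\ref{prop:achievability-IEBP}. If $\gamma = 2$ ($\mu_1 = 2\mu_2$), the truncated second moment diverges logarithmically, yielding $H(f_0, f_1) \asymp q^2 \log(1/q)$. If $\gamma < 2$ ($\mu_1 > 2\mu_2$), both the truncated-second-moment and the tail pieces are of order $q^{\gamma}$, yielding $H(f_0, f_1) \asymp q^{\mu_1/(\mu_1-\mu_2)}$. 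Requiring $n H(f_0, f_1) \le \epsilon^2/2$ then forces $q = \mcO(n^{-1/2})$, $q = \mcO((n\log n)^{-1/2})$, and $q = \mcO(n^{-(\mu_1-\mu_2)/\mu_1})$ respectively, and multiplying by $n$ reproduces the three covert throughputs in \eqref{eq:achievability-IEBP-exp}. The converse \eqref{eq:converse-IEBP-exp} is immediate from the same asymptotics: in each regime $T(n) = nq = \omega(\cdot)$ pushes $n H(f_0,f_1) \to \infty$.

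The main obstacle is the case $\gamma \le 2$, which is out of reach of Proposition~\ref{prop:achievability-IEBP} since $C_0 = \infty$. Producing the $\log(1/q)$ factor at $\gamma = 2$ requires a clean truncation argument together with careful matching of the Taylor and tail contributions at $|q\rho| = 1$, while at $\gamma < 2$ one must verify that these two contributions have the same sign (so they actually add to $\Theta(q^\gamma)$ rather than cancel) and that the cubic Taylor remainder is $o(q^\gamma)$. For the converse one additionally needs matching lower bounds on $H(f_0,f_1)$, which is the harder direction because the elementary bound $\sqrt{1+x} \le 1 + x/2$ combined with $\E_0[\rho]=0$ only yields $H \ge 0$; a sharper second-order inequality, or a direct dominated-convergence computation of the leading term after rescaling $(U,W) \mapsto (qU, W)$, will be required.
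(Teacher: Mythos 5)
Your reduction to the single-period Hellinger distance via $H(f_0^{\otimes n},f_1^{\otimes n}) = 1-(1-H(f_0,f_1))^n$ and the sandwich \eqref{eq:lb-ub} is exactly the paper's framework (Lemmas~\ref{lem:Hellinger}--\ref{lem:lb.up-total-variation} and Corollaries~\ref{cor:covert-non_covert}, \ref{cor:non_covert-Y}), and your identification of the Pareto exponent $\gamma=\mu_1/(\mu_1-\mu_2)$ as the driver of the phase transition is correct for $\mu_1>\mu_2$ (for $\mu_1\le\mu_2$ the tail of $\rho$ is bounded or exponential, but that range lies inside the easy regime $\mu_1<2\mu_2$). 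The genuine gap is that you stop where the proof actually begins: the two-sided asymptotics $H(f_0,f_1)\asymp q^{\min(2,\gamma)}$ (with the $\log(1/q)$ correction at $\gamma=2$) are asserted, while you explicitly flag that the sign-matching at $\gamma<2$, the control of the Taylor remainder, and above all the \emph{lower} bounds needed for the converse are not supplied. Those are not finishing touches; they are the content of the proposition. The paper carries out the corresponding work in Lemma~\ref{lem:expansion-Y} and Appendix~\ref{app:proof-key-lemma} by computing $\E[\sqrt{\Xi(\theta,X_r)}]$ exactly via a change of variables to Pareto form and L'H\^opital, then handling the residual $v$-integration through the auxiliary quantities $D_n,E_n$ of Appendix~\ref{app:Dn}. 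A secondary route difference: you run the converse through the full $(Y,V)$ statistic, whereas the paper (\prettyref{sec:converse-IEBP-proof}) runs it through the marginal $Y$ statistic, for which $\widetilde Z(q,x)=\Xi(pq,x)$ collapses the $v$-average to a single scalar; both are legitimate detectors (more information can only help Willie), but the paper's choice avoids one integral.

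The specific obstacle you raise for the converse has a one-line fix that you missed. Since $\E[Z(q,X,V)]=\int f_1 = 1$, the formula $Z=1+q\rho$ with $\E[\rho]=0$ gives
\begin{equation*}
H(f_0,f_1)=1-\E\left[\sqrt{Z}\right]=\tfrac12\,\E\!\left[\bigl(\sqrt{Z}-1\bigr)^2\right],
\end{equation*}
which is just the definition \eqref{eq:Hellinger-def} rewritten under $f_0$ but now manifestly nonnegative with nothing to cancel. Because $\rho\ge -1$, the set $\{|q\rho|>1\}$ is $\{\rho>1/q\}$, and applying your truncation split to this display gives, up to absolute constants for both the upper and the lower bound,
\begin{equation*}
H(f_0,f_1)\;\asymp\; q^2\,\E\!\left[\rho^2\,\bone\{\rho\le 1/q\}\right]\;+\;q\,\E\!\left[\rho\,\bone\{\rho> 1/q\}\right],
\end{equation*}
since $(\sqrt{1+x}-1)^2\asymp x^2$ on $[-1,1]$ and $\asymp x$ on $[1,\infty)$. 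Feeding in the $\gamma$-Pareto tail of $\rho$ (the $e^{-\mu_2 V}$ prefactor only changes the constant, as $\E[e^{-\gamma\mu_2 V}]<\infty$) yields $q^2$, $q^2\log(1/q)$, and $q^\gamma$ in the three regimes, two-sided, at which point both \eqref{eq:achievability-IEBP-exp} and \eqref{eq:converse-IEBP-exp} follow from $H(f_0^{\otimes n},f_1^{\otimes n})\approx 1-e^{-nH(f_0,f_1)}$ and the sandwich. This would make your route a genuinely more elementary and conceptually transparent alternative to the paper's explicit-integral computation; but as written, those asymptotics are the unfinished core of the argument.
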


The above results are in terms of $T(n)$,  the expected number of jobs inserted by Alice over $n$ successive W-BPs.  It is interesting to determine also the  expected number of Willie  jobs served during these $n$ W-BPs under the IEBP policy. Let $T_W(n)$ be this number.
When $q=0$, the system behaves like a standard M/G/1 queue with traffic intensity $\lambda/\mu_1<1$ and it is known that the expected number of jobs served in a busy period is $(1-\lambda/\mu_1)^{-1}$ \cite{Kleinrock75}, yielding $T_W(n)=n(1-\lambda/\mu_1)^{-1}$.  
\prettyref{prop:Tw} below shows that the IEBP policy increases each W-BP by a constant factor.
 The proof is in Appendix \ref{app:Tw-proof}.

\begin{proposition}
\label{prop:Tw} 
Under IEBP,  $T_W(n)=\Theta(n)$. More precisely
\begin{equation}
\label{prop:eq:value-TWn}
T_W(n)= \frac{n}{1-\lambda/\mu_1}+\frac{\lambda q n}{1-\lambda/\mu_1} \int_0^\infty t \hat g_2(t)dt,
\end{equation}
which gives the two-sided inequality 
\begin{equation}\label{eq:Tw-bounds}
n\le T_W(n)\leq n \left(\frac{1+q\lambda/\mu_2}{1-\lambda/\mu_1}\right).
\end{equation}
If $g_2(x)=\mu_2 e^{-\mu_2 x}$, i.e. Alice  job service times are exponentially distributed, then  \begin{equation}
\label{eq:Tw-exponential}
T_W(n)=n\left(\frac{1+pq\lambda/\mu_1}{1-\lambda/\mu_2}\right).
\end{equation} 
\end{proposition}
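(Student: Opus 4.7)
The plan is to exploit the simple renewal structure induced by IEBP. Under IEBP, the $n$ successive W-BPs are iid: the idle periods separating consecutive W-BPs are iid $\mathrm{Exp}(\lambda)$ by memorylessness of the Poisson arrivals, Alice's insertion decisions are iid Bernoulli$(q)$, and all service times are iid and mutually independent. Consequently $T_W(n)=n\,\E[N_1]$, where $N_1$ is the number of Willie jobs served in a single W-BP, and the problem reduces to computing $\E[N_1]$.

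The key observation is that within a single W-BP the system behaves as an M/G/1 queue with an \emph{exceptional first service time}: the reconstructed service of the first Willie job is $\tilde Y$ with pdf $\widetilde f_1$, while all subsequent services within the W-BP have the ordinary pdf $g_1$ (Alice does not insert inside a W-BP under IEBP). Conditioning on Alice's decision at the end of the preceding W-BP one has
\begin{equation*}
\tilde Y \;=\; (\sigma_2-V)^+\,\bone\{\text{Alice inserts}\}+\sigma_1,
\end{equation*}
with $V\sim \mathrm{Exp}(\lambda)$, $\sigma_1\sim g_1$, $\sigma_2\sim g_2$ mutually independent, which gives $\E[\tilde Y]=1/\mu_1+q\,\E[(\sigma_2-V)^+]$.

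Next I would invoke the classical M/G/1 busy-period identity: conditional on initial work $w$, the expected number of customers served in the busy period equals $1+\lambda w/(1-\lambda/\mu_1)$, since each of the $\lambda w$ expected arrivals during $w$ initiates an ordinary M/G/1 sub-busy-period of mean size $(1-\lambda/\mu_1)^{-1}$. Averaging over $w=\tilde Y$ yields
\begin{equation*}
\E[N_1]\;=\;1+\frac{\lambda\,\E[\tilde Y]}{1-\lambda/\mu_1}\;=\;\frac{1}{1-\lambda/\mu_1}+\frac{\lambda q\,\E[(\sigma_2-V)^+]}{1-\lambda/\mu_1},
\end{equation*}
which, upon writing $\E[(\sigma_2-V)^+]=\int_0^\infty t\,\hat g_2(t)\,dt$ with $\hat g_2$ the pdf of $(\sigma_2-V)^+$, produces \eqref{prop:eq:value-TWn}. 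The two-sided bound \eqref{eq:Tw-bounds} then follows from the elementary estimates $0\le \E[(\sigma_2-V)^+]\le \E[\sigma_2]=1/\mu_2$, together with $(1-\lambda/\mu_1)^{-1}\ge 1$.

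For the exponential case I would compute $\E[(\sigma_2-V)^+]$ directly by the competing-exponentials trick: $\Prob(\sigma_2>V)=\lambda/(\lambda+\mu_2)=p$ (noting that $p=1-G_2^*(\lambda)$ agrees with \eqref{eq:interference-prob}), and by memorylessness $\sigma_2-V\mid\{\sigma_2>V\}\sim\mathrm{Exp}(\mu_2)$, so $\E[(\sigma_2-V)^+]=p/\mu_2$; substitution delivers \eqref{eq:Tw-exponential}. No step is genuinely hard; the only points requiring care are the iid justification of $T_W(n)=n\,\E[N_1]$ (which depends crucially on Poisson idle periods together with the fact that IEBP acts only at W-BP boundaries) and the clean identification of the exceptional-first-service M/G/1 structure that IEBP induces.
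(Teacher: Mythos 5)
Your proof is correct and reaches the same conclusions, but by a genuinely more elementary route than the paper. Both you and the paper recognize that under IEBP each W-BP is an M/G/1 busy period with an exceptional first service time whose pdf is $\widetilde f_1$, and both reduce the problem to computing $\E[\widetilde Y]$, the mean of that exceptional service. Where you differ is in the machinery: the paper (Appendix~\ref{app:Tw-proof}) invokes the generating function $\mathcal G_M(z)=z\,\tau^*(\lambda(1-d(z)))$ from \cite{Bhat-64}, differentiates it at $z=1$ to obtain $\E[M]=1+\lambda\E[Y]/(1-\lambda/\mu_1)$, and then computes $\E[Y]$ by manipulating the Laplace transform $\tau^*(s)=G_1^*(s)\bigl(1-pq+q\int_0^\infty e^{-st}\hat g_2(t)\,dt\bigr)$. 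You obtain the same two ingredients by purely probabilistic reasoning: the branching decomposition $\E[N_1\mid\widetilde Y=w]=1+\lambda w/(1-\lambda/\mu_1)$, and the pathwise identity $\widetilde Y=\sigma_1+(\sigma_2-V)^+\bone\{\text{insert}\}$, which immediately gives $\E[\widetilde Y]=1/\mu_1+q\,\E[(\sigma_2-V)^+]$. Your observation that $\hat g_2$ \emph{is} the (defective) pdf of $(\sigma_2-V)^+$ on $(0,\infty)$ is exactly the content of the paper's identity $\widetilde f_1=(1-pq)g_1+q\,(g_1*\hat g_2)$, but arrived at without transforms. The two approaches are equivalent in substance; yours avoids an appeal to the Bhat generating-function formula and is arguably cleaner, while the paper's LST route has the advantage of producing the explicit form of $\widetilde f_1$ that is reused elsewhere. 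One small remark: the exponential-case formula \eqref{eq:Tw-exponential} as printed in the paper has $\mu_1$ and $\mu_2$ interchanged; substituting $\int_0^\infty t\,\hat g_2(t)\,dt=p/\mu_2$ into \eqref{prop:eq:value-TWn} gives $T_W(n)=n\,(1+pq\lambda/\mu_2)/(1-\lambda/\mu_1)$, which is what your computation produces, so you should not claim that ``substitution delivers'' the printed \eqref{eq:Tw-exponential} verbatim — it delivers the corrected form.
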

\begin{remark}
Recall that the Fisher information regarding $q$ is infinity at $q=0$ when $\mu_1 \ge \mu_2$. It is interesting to speculate that this translates into facilitating Willie's detection task, which appears in the form of the phase transition in equations (\ref{eq:achievability-IEBP-exp}) and (\ref{eq:converse-IEBP-exp}). Note that one implication of this transition is that  Alice should select job sizes with mean size $1/\mu_2 <2/\mu_1$ to increase throughput without being detected.
\end{remark}


\section{Preliminary results}
\label{sec:preliminary-results}
We first specialize the two-sided inequality in (\ref{eq:lb-ub}) to the case where $u_0=f_0^{\otimes n}$ and $u_1=f_1^{\otimes n}$ and then develop 
a covert (resp. non-covert) criterion for Alice.
\begin{lemma}\label{lem:Hellinger} 
The Hellinger distance between $f_0^{\otimes n}$ and $f_1^{\otimes n}$ is given by
\begin{equation}\label{eq:Hellinger}
H\left(f_0^{\otimes n},f_1^{\otimes n}\right)= 1-\left(\E\left[\sqrt{Z(q,X,V)}\right]\right)^n.
\end{equation}
\end{lemma}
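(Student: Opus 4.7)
The plan is essentially a two-line calculation that combines the tensor-product identity for the Hellinger affinity, already recorded in the excerpt as equation (\ref{eq:H-alt}), with the definition of the likelihood ratio $Z(q,y,v) = f_1(y,v)/f_0(y,v)$ from (\ref{eq:Z-def}). There is no real obstacle here; the only thing to get right is the change of measure that turns the integral of $\sqrt{f_0 f_1}$ into an expectation under the null distribution of $(X,V)$.

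First I would apply (\ref{eq:H-alt}) with $u_0=f_0$ and $u_1=f_1$ (both being densities on $\R^2_+$ for the pair $(Y,V)$), which gives immediately
\begin{equation*}
H\!\left(f_0^{\otimes n}, f_1^{\otimes n}\right) \;=\; 1 - \left(\int \sqrt{f_0(y,v)\, f_1(y,v)}\,dy\,dv\right)^{\!n}.
\end{equation*}
Next I would rewrite the integrand by factoring $f_0$: using $f_1(y,v) = Z(q,y,v) f_0(y,v)$, we have $\sqrt{f_0(y,v) f_1(y,v)} = \sqrt{Z(q,y,v)}\, f_0(y,v)$. Therefore
\begin{equation*}
\int \sqrt{f_0(y,v)\, f_1(y,v)}\,dy\,dv \;=\; \int \sqrt{Z(q,y,v)}\, f_0(y,v)\, dy\,dv \;=\; \E\!\left[\sqrt{Z(q,X,V)}\right],
\end{equation*}
since $(X,V)$ has joint pdf $f_0$ by definition (as recorded in (\ref{eq:C0}) and the surrounding text). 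Substituting this into the display above yields (\ref{eq:Hellinger}).

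The one thing worth double-checking in the writeup is that the representation $f_1 = Z f_0$ is valid wherever $f_0 > 0$, so that $\sqrt{f_0 f_1} = \sqrt{Z} f_0$ almost everywhere under the reference measure; this is fine because the integral $\int \sqrt{f_0 f_1}\,dy\,dv$ ignores the set $\{f_0 = 0\}$ on which $\sqrt{f_0 f_1}=0$, and on $\{f_0 > 0\}$ the quotient $Z$ is well defined. No further structure of $f_1$, $g_1$, or $g_2$ is needed; the lemma is a pure consequence of the product form of $f_i^{\otimes n}$ and the definition of $Z$.
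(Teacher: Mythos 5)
Your proof is correct and follows exactly the same two-step route as the paper: apply the tensor-product identity \eqref{eq:H-alt} for the Hellinger affinity, then rewrite $\sqrt{f_0 f_1} = \sqrt{Z}\, f_0$ via the definition $Z = f_1/f_0$ in \eqref{eq:Z-def} and recognize the resulting integral as $\E[\sqrt{Z(q,X,V)}]$ under the law $f_0$. Your extra remark about the set $\{f_0=0\}$ is a harmless bonus the paper omits.
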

\begin{proof}
By \eqref{eq:H-alt} and \eqref{eq:Z-def},
\begin{align*}
H\left(f_0^{\otimes n},f_1^{\otimes n}\right) &=
 1-\left(\int_{[0,\infty)^2} \sqrt{f_0(y,v)f_1(y,v)} \,dy dv\right)^n\\
&= 1-\left(\int  f_0(y,v)\sqrt{Z(q,y,v)} \,dy  dv\right)^n\\
&= 1-\left(\E\left[\sqrt{Z(q,X,V)}\right]\right)^n.
\end{align*}
\end{proof}
Specializing (\ref{eq:lb-ub}) to the value of $H\left(f_0^{\otimes n},f_1^{\otimes n}\right)$ found in Lemma \ref{lem:Hellinger} gives,
\begin{lemma}[Lower $\&$ upper bounds on total variation distance for statistic $\{Y_j,V_j\}_j$]  \hfill
\label{lem:lb.up-total-variation}
For every $n\geq 1$
\begin{align}
1-\left(\E\left[\sqrt{Z(q,X,V)}\right]\right)^n \leq T_V\left(f_0^{\otimes n},f_1^{\otimes n}\right)\leq \sqrt{2\left(1-\left(\E\left[\sqrt{Z(q,X,V)}\right]\right)^n\right)}.
\label{2-sided-bounds-Z}
\end{align}
\end{lemma}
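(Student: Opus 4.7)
The plan is to obtain the claimed two-sided bound as an immediate corollary by chaining two ingredients that are already available: the generic sandwich inequality between total variation and Hellinger distance stated in \eqref{eq:lb-ub}, and the explicit formula for the Hellinger distance between the product measures $f_0^{\otimes n}$ and $f_1^{\otimes n}$ proved in Lemma \ref{lem:Hellinger}.

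Concretely, I would proceed in two short steps. First, instantiate \eqref{eq:lb-ub} with $u_0 = f_0^{\otimes n}$ and $u_1 = f_1^{\otimes n}$ (both are legitimate probability densities on $[0,\infty)^{2n}$ since each $(Y_j,V_j)$ has joint density $f_i$ under $H_i$, and under each hypothesis the $n$ pairs are i.i.d., as noted in the text just before the statement of Theorem \ref{thm:optimal-test}). This yields
\[
H\!\left(f_0^{\otimes n},f_1^{\otimes n}\right) \le T_V\!\left(f_0^{\otimes n},f_1^{\otimes n}\right) \le \sqrt{2\,H\!\left(f_0^{\otimes n},f_1^{\otimes n}\right)}.
\]
Second, substitute the identity
\[
H\!\left(f_0^{\otimes n},f_1^{\otimes n}\right) \;=\; 1-\bigl(\E[\sqrt{Z(q,X,V)}]\bigr)^n
\]
from Lemma \ref{lem:Hellinger} into both sides, which delivers exactly \eqref{2-sided-bounds-Z}.

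There is essentially no obstacle here; the work has already been done upstream. The only thing worth double-checking is that the square-root appearing in the upper bound is well defined, i.e.\ that $\E[\sqrt{Z(q,X,V)}]\le 1$, so that the quantity under the square root is nonnegative. This is automatic from the Cauchy--Schwarz inequality applied to $\int \sqrt{f_0 f_1}\,dy\,dv$, or equivalently from the fact that the Hellinger distance \eqref{eq:Hellinger-def} is always in $[0,1]$. Once this is observed, the proof reduces to the two substitutions above, and the lemma is established.
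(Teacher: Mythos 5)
Your proof is correct and follows exactly the same route as the paper: specialize the Kraft sandwich inequality \eqref{eq:lb-ub} to $u_0=f_0^{\otimes n}$, $u_1=f_1^{\otimes n}$, then substitute the Hellinger formula from Lemma~\ref{lem:Hellinger}. The extra remark that $\E[\sqrt{Z(q,X,V)}]\le 1$ (hence the square root is real) is a sensible sanity check, though it is already implicit in $H\in[0,1]$.
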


Combining Lemma \ref{lem:lb.up-total-variation}, the covert criterion (\ref{covert-criterion2}), and  the non-covert criterion (\ref{non-covert-criterion}) yields the following covert/non-covert criterion for Alice:
\begin{corollary}[Covert/non-covert criteria for the IEPB policy] \hfill \\
 \label{cor:covert-non_covert}
Assume that Willie uses an optimal detector for the sufficient statistic $(Y_{1:n},V_{1:n})$.
Alice's insertions are covert if  for any $\epsilon>0$,
\begin{equation}
\label{cor:covert}
\liminf_n\left(\E\left[\sqrt{Z(q,X,V)}\right]\right)^n\geq 1-\epsilon,
\end{equation}
and Alice's insertions are not covert  if for any  $\delta>0$
\begin{equation}
\label{cor:non-covert}
\limsup_n\left(\E\left[\sqrt{Z(q,X,V)}\right]\right)^n<\delta.
\end{equation}
\end{corollary}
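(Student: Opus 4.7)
The plan is to deduce the corollary directly from the two-sided bound in Lemma~\ref{lem:lb.up-total-variation} together with Theorem~\ref{thm:optimal-test} and the definitions of covert/non-covert behavior in \eqref{covert-criterion2} and \eqref{non-covert-criterion}. I would split the argument cleanly into the covert and non-covert directions; each direction reduces to chasing a single chain of inequalities, so the corollary is essentially bookkeeping on work that has already been done.

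For the covert direction, I would start from the right-hand inequality in \eqref{2-sided-bounds-Z},
\[
T_V\bigl(f_0^{\otimes n},f_1^{\otimes n}\bigr)\le \sqrt{2\bigl(1-\bigl(\E[\sqrt{Z(q,X,V)}]\bigr)^n\bigr)},
\]
and pass to the $\limsup$ in $n$. Given an arbitrary $\epsilon>0$, the idea is to apply the hypothesis \eqref{cor:covert} with $\epsilon'=\epsilon^2/2$, so that $\liminf_n(\E[\sqrt{Z}])^n\ge 1-\epsilon'$, equivalently $\limsup_n\bigl(1-(\E[\sqrt{Z}])^n\bigr)\le \epsilon'$. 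Plugging this into the displayed bound yields $\limsup_n T_V\le \sqrt{2\epsilon'}=\epsilon$, and since $\epsilon$ was arbitrary the covertness criterion \eqref{covert-criterion2} is satisfied.

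For the non-covert direction, I would exhibit the optimal detector of Theorem~\ref{thm:optimal-test} as the witness. By that theorem and the left-hand inequality of \eqref{2-sided-bounds-Z},
\[
P_E^\star = 1 - T_V\bigl(f_0^{\otimes n},f_1^{\otimes n}\bigr)\le \bigl(\E[\sqrt{Z(q,X,V)}]\bigr)^n.
\]
Now the hypothesis \eqref{cor:non-covert} supplies some $\delta\in(0,1)$ with $\limsup_n(\E[\sqrt{Z}])^n<\delta$, so taking $\limsup$ on both sides gives $\limsup_n P_E^\star<\delta<1$. This is exactly the sufficient condition for non-covertness in \eqref{non-covert-criterion}, realized by the likelihood-ratio test of Theorem~\ref{thm:optimal-test}.

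There is no real obstacle here: once Lemma~\ref{lem:lb.up-total-variation} is in hand, both directions amount to matching the $\epsilon$/$\delta$ quantification in \eqref{cor:covert}–\eqref{cor:non-covert} to the quantification in \eqref{covert-criterion2} and \eqref{non-covert-criterion}. The only minor care needed is the choice $\epsilon'=\epsilon^2/2$ to absorb the square root in the Hellinger upper bound; everything else is a direct passage to the limit.
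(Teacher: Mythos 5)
Your proof is correct and follows exactly the route the paper intends: the paper simply says ``combining Lemma~\ref{lem:lb.up-total-variation}, the covert criterion \eqref{covert-criterion2}, and the non-covert criterion \eqref{non-covert-criterion} yields'' the corollary, and your argument is precisely that combination spelled out, with the right adjustment $\epsilon' = \epsilon^2/2$ to absorb the square root in the Hellinger upper bound and the optimal likelihood-ratio test of Theorem~\ref{thm:optimal-test} as the witness detector for the non-covert direction.
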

The lower bound (\ref{cor:covert}) is used to derive the covert throughput (\ref{eq:achievability-IEBP-exp})  in Proposition \ref{prop:phase-transition-exponential}.\\

We now state and prove a non-covert criterion for the IEPB policy.  We do so by proposing and analyzing a detector that relies on the  (non-sufficient) statistic $\{Y_j\}_j$.
Recall that rvs $Y_1,\ldots,Y_n$ are iid with common pdf $\widetilde f_i(y)$ under $H_i$, for $i=0,1$.  The non-covert criterion is obtained by applying Theorem 13.1.1 in 
\cite{LR05} to the statistic $Y_{1:n}$, which yields the minimum $P_E$ to be $1-T_V\left(\widetilde f_0^{\otimes n}, \widetilde f_1^{\otimes n}\right)$.

The following lemmas are the analog of Lemmas \ref{lem:Hellinger} and \ref{cor:covert-non_covert} for the statistic $\{Y_j\}$. 
\begin{lemma}\label{lem:Hellinger-Y} 
The Hellinger distance between $\widetilde f_0^{\otimes n}$ and $\widetilde f_1^{\otimes n}$ is given by
\begin{equation}
\label{eq:Hellinger-Y}
H\left(\widetilde f_0^{\otimes n},\widetilde f_1^{\otimes n}\right)= 1-\left(\E\left[\sqrt{\widetilde Z(q,X)}\right]\right)^n,
\end{equation}
with \begin{equation}
\label{eq:Ztilde-def}
\widetilde Z(q,x): = \frac{\widetilde f_1(x)}{g_1(x)}.
\end{equation}
\end{lemma}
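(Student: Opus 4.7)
The plan is to mirror the proof of Lemma \ref{lem:Hellinger} essentially verbatim, just replacing the joint statistic $(Y,V)$ by the marginal statistic $Y$ and the joint pdfs $f_i$ by the marginal pdfs $\widetilde f_i$. The only structural ingredients needed are (i) the tensor-product identity for the Hellinger distance in equation \eqref{eq:H-alt}, (ii) the identity $\widetilde f_0(y)=g_1(y)$ from \eqref{f0tilde}, and (iii) the definition \eqref{eq:Ztilde-def} of the marginal likelihood ratio $\widetilde Z(q,x)=\widetilde f_1(x)/g_1(x)$.

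First, I would apply \eqref{eq:H-alt} with $u_0=\widetilde f_0$ and $u_1=\widetilde f_1$, which immediately yields
\begin{equation*}
H\!\left(\widetilde f_0^{\otimes n},\widetilde f_1^{\otimes n}\right)=1-\left(\int_0^\infty\sqrt{\widetilde f_0(y)\widetilde f_1(y)}\,dy\right)^{\!n}.
\end{equation*}
Second, I would rewrite the integrand by factoring out $\widetilde f_0(y)=g_1(y)$:
\begin{equation*}
\int_0^\infty\sqrt{\widetilde f_0(y)\widetilde f_1(y)}\,dy=\int_0^\infty g_1(y)\sqrt{\frac{\widetilde f_1(y)}{g_1(y)}}\,dy=\int_0^\infty g_1(y)\sqrt{\widetilde Z(q,y)}\,dy,
\end{equation*}
and recognize this as the expectation $\E\!\left[\sqrt{\widetilde Z(q,X)}\right]$, where $X$ is a generic rv with pdf $g_1$ (as defined in the glossary in Figure \ref{fig:notation}). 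Substituting back produces \eqref{eq:Hellinger-Y}.

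There is no real obstacle here: the identity is a one-line calculation once the sufficient statistic has been projected from $(Y,V)$ down to $Y$, and neither the absolute continuity of $\widetilde f_1$ with respect to $g_1$ nor integrability issues arise, since $\widetilde f_1$ is a genuine probability density. The only point worth noting is that under $H_0$ the rv $Y$ has pdf $g_1$, which is exactly why the deterministic integral can be re-expressed as an expectation; this is the direct analog of the identification $(X,V)\sim f_0$ used in the proof of Lemma \ref{lem:Hellinger}.
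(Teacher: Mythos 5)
Your proof is correct and is exactly the approach the paper takes: the paper explicitly states that the proof of Lemma~\ref{lem:Hellinger-Y} mimics that of Lemma~\ref{lem:Hellinger} and omits it, and your verbatim adaptation (apply \eqref{eq:H-alt}, factor out $\widetilde f_0=g_1$, recognize the expectation over $X\sim g_1$) is precisely that mimicry.
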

\begin{lemma}[Lower bound on total variation distance for statistic $Y_{1:n}$]  
\label{lem:lb.up-total-variation-Y}
For every $n\geq 1$
\begin{align}
1-\left(\E\left[\sqrt{\widetilde Z(q,X)}\right]\right)^n \leq T_V\left(\widetilde f_0^{\otimes n},\widetilde f_1^{\otimes n}\right).
\label{bounds-Z-Y}
\end{align}
\end{lemma}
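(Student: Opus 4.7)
The plan is to obtain the bound by directly combining the lower half of the two-sided inequality \eqref{eq:lb-ub} with the identity established in Lemma \ref{lem:Hellinger-Y}, in exact analogy with how Lemma \ref{lem:lb.up-total-variation} was derived from Lemma \ref{lem:Hellinger}. The key observation is that \eqref{eq:lb-ub} holds for any pair of pdfs on $\R^d$, so in particular for the product pdfs $\widetilde f_0^{\otimes n}$ and $\widetilde f_1^{\otimes n}$ on $\R^n$.

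First, I would invoke \eqref{eq:lb-ub} with $u_0=\widetilde f_0^{\otimes n}$ and $u_1=\widetilde f_1^{\otimes n}$ to write
\begin{equation*}
H\left(\widetilde f_0^{\otimes n},\widetilde f_1^{\otimes n}\right) \leq T_V\left(\widetilde f_0^{\otimes n},\widetilde f_1^{\otimes n}\right).
\end{equation*}
Next, I would substitute the closed-form expression for $H(\widetilde f_0^{\otimes n},\widetilde f_1^{\otimes n})$ provided by Lemma \ref{lem:Hellinger-Y}, namely $1-\bigl(\E[\sqrt{\widetilde Z(q,X)}]\bigr)^n$, into the left-hand side above. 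This yields the claimed inequality \eqref{bounds-Z-Y} immediately.

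There is essentially no obstacle here: this lemma is a direct corollary of two previously recorded facts and requires no new calculation. The only minor point worth noting is that Lemma \ref{lem:Hellinger-Y} itself, which the argument relies upon, is proved by mimicking the short derivation given for Lemma \ref{lem:Hellinger}: expand $H(\widetilde f_0^{\otimes n},\widetilde f_1^{\otimes n})$ via the affinity formula \eqref{affinity-H}, use the product structure to factor the $n$-fold integral into the $n$-th power of $\int \sqrt{\widetilde f_0(y)\widetilde f_1(y)}\,dy$, and finally rewrite this integral as $\E_{X\sim g_1}[\sqrt{\widetilde Z(q,X)}]$ using \eqref{eq:Ztilde-def} together with $\widetilde f_0=g_1$ from \eqref{f0tilde}. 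Since that derivation was already spelled out for the joint statistic, the present lemma is obtained simply by dropping the $V$-coordinate throughout.
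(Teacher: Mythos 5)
Your argument is correct and matches the paper's own (very short) justification exactly: the paper states that the lemma follows from Lemma \ref{lem:Hellinger-Y} together with the lower bound in \eqref{eq:lb-ub}, which is precisely the two-step derivation you give. Nothing further to add.
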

The proof of Lemma \ref{lem:Hellinger-Y} mimics that of Lemma \ref{lem:Hellinger} and is omitted.  Lemma \ref{lem:lb.up-total-variation-Y} follows from
Lemma \ref{lem:Hellinger-Y} and the lower bound in (\ref{eq:lb-ub}).

The non-covert criterion for the statistic $\{Y_j\}_j$ announced earlier is given below. Its proof follows from (\ref{non-covert-criterion}) and (\ref{bounds-Z-Y}).

\begin{corollary}[Non-covert criterion for the IEPB policy] \hfill \\
 \label{cor:non_covert-Y}
Assume that Willie uses an optimal detector for the statistic $\{Y_j\}_j$. 
Alice's insertions are not covert  if for any  $\delta>0$
\begin{equation}
\label{cor:non-covert-Y}
\limsup_n\left(\E\left[\sqrt{\widetilde Z(q,X)}\right]\right)^n<\delta.
\end{equation}
\end{corollary}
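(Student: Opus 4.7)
The plan is to chain three already-established facts: the non-covert criterion (\ref{non-covert-criterion}), the optimal-test characterization from Theorem \ref{thm:optimal-test} specialized to the (non-sufficient) statistic $Y_{1:n}$, and the lower bound on the total variation distance provided by Lemma \ref{lem:lb.up-total-variation-Y}. The point is that criterion (\ref{non-covert-criterion}) only requires the \emph{existence} of some detector whose error probability has $\limsup_n P_E<\delta$; it need not be the optimal detector based on the sufficient statistic, so restricting Willie to the coarser statistic $Y_{1:n}$ still produces a valid witness.

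The first step is to justify applying Theorem \ref{thm:optimal-test} to $Y_{1:n}$. Since $(Y_1,V_1),\ldots,(Y_n,V_n)$ are iid under both $H_0$ and $H_1$, marginalizing out the $V_j$'s shows $Y_1,\ldots,Y_n$ are iid with pdf $\widetilde f_i$ under $H_i$, so their joint pdf is $\widetilde f_i^{\otimes n}$. The likelihood-ratio test between $\widetilde f_0^{\otimes n}$ and $\widetilde f_1^{\otimes n}$ then achieves
\beqn
P_E^{Y} \;=\; 1-T_V\!\left(\widetilde f_0^{\otimes n},\widetilde f_1^{\otimes n}\right).
\eeqn
The second step is to invoke the lower bound (\ref{bounds-Z-Y}) from Lemma \ref{lem:lb.up-total-variation-Y}, namely $T_V(\widetilde f_0^{\otimes n},\widetilde f_1^{\otimes n})\ge 1-(\E[\sqrt{\widetilde Z(q,X)}])^n$, which after substitution yields $P_E^{Y}\le (\E[\sqrt{\widetilde Z(q,X)}])^n$. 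The third step is simply to take the $\limsup$: under the hypothesis $\limsup_n (\E[\sqrt{\widetilde Z(q,X)}])^n<\delta$, we obtain $\limsup_n P_E^{Y}<\delta$, which is exactly (\ref{non-covert-criterion}) for the detector just exhibited, hence Alice's insertions are not covert.

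There is no real obstacle here; the work has already been done in Lemma \ref{lem:Hellinger-Y} and Lemma \ref{lem:lb.up-total-variation-Y}, and the corollary amounts to unpacking the definition of covertness. The one conceptual point worth emphasizing, which I would state explicitly in the write-up, is that using a detector based on a non-sufficient statistic can only \emph{increase} $P_E$ relative to the optimal detector on $(Y_{1:n},V_{1:n})$, so the corollary yields a (possibly loose) non-covertness criterion rather than a sharp one—this is why it is presented as a sufficient condition for non-covertness in parallel with the sharp criterion (\ref{cor:non-covert}) of Corollary \ref{cor:covert-non_covert}.
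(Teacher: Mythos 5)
Your proof is correct and follows essentially the same route the paper takes: apply Theorem~\ref{thm:optimal-test} to the iid marginal statistic $Y_{1:n}$ to get $P_E = 1 - T_V(\widetilde f_0^{\otimes n}, \widetilde f_1^{\otimes n})$, lower-bound the total variation via Lemma~\ref{lem:lb.up-total-variation-Y}, and invoke the non-covertness criterion \eqref{non-covert-criterion} for the existence of a detector. The explicit remark that a non-sufficient statistic only weakens Willie and hence yields a valid (if conservative) non-covertness test is a helpful clarification, and matches the paper's framing.
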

Corollary  \ref{cor:non_covert-Y} is used in the proofs of Proposition \ref{prop:converse-IEBP} and of the converse (\ref{eq:converse-IEBP-exp}) in Proposition \ref{prop:phase-transition-exponential}.\\

\begin{remark}
In direct analogy with the upper bound in  Lemma \ref{lem:lb.up-total-variation} it is worth noting that $ 
T_V\left(\widetilde f_0^{\otimes n},\widetilde f_1^{\otimes n}\right)$ in Lemma \ref{lem:lb.up-total-variation-Y} is upper bounded by
$\sqrt{2\left(1-\left(\E\left[\sqrt{\widetilde Z(q,X)}\right]\right)^n\right)}$. However, this bound is not useful for establishing a covert result since it does not use  
the sufficient statistic $(Y_{1:n},V_{1:n})$. 
\end{remark}


\section{Proof of \prettyref{prop:achievability-IEBP}}
\label{sec:achievability-IEBP-proof}

The proof of Proposition \ref{prop:achievability-IEBP} relies on an upper bound on the total variation distance between
 $f_0^{\otimes n}$ and $f_1^{\otimes n}$, given in Lemma \ref{lem:up-TV} below.  Recall the definition of $\rho(X,V)$ given in
 (\ref{eq:rho}).
 
\begin{lemma}
\label{lem:up-TV}
Let $(X,V)$ be rvs with density $f_0$.
Assume that $\E[\rho(X,V)]=0$. Then, for every $n\geq 1$,
\begin{equation}
\label{ub:TV}
T_V\left(f_0^{\otimes n},f_1^{\otimes n}\right) \leq \frac{1}{2} \sqrt{(1+q^2 C_0)^n -1},
\end{equation}
where $C_0$ is defined in \eqref{eq:C0}.
\end{lemma}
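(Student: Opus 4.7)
\textbf{Proof plan for Lemma \ref{lem:up-TV}.}

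The plan is to bypass the Hellinger-based bound (\ref{eq:lb-ub}) entirely and use the sharper relation between total variation distance and the $\chi^{2}$-type quantity $\int u_{1}^{2}/u_{0}$, which is the natural object when the likelihood ratio is affine in $q$ as in (\ref{eq:Z-def-with-rho}). The assumption $\E[\rho(X,V)]=0$ is precisely what makes the $\chi^{2}$ divergence reduce to $q^{2}C_{0}$ at first order, giving the desired bound.

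First I would apply Cauchy--Schwarz to the total variation integral in the form
\begin{equation*}
T_V\!\left(f_0^{\otimes n},f_1^{\otimes n}\right)
=\tfrac{1}{2}\int \frac{|f_1^{\otimes n}-f_0^{\otimes n}|}{\sqrt{f_0^{\otimes n}}}\,\sqrt{f_0^{\otimes n}}\;dy_{1:n}dv_{1:n}
\le \tfrac{1}{2}\sqrt{\int \frac{(f_1^{\otimes n}-f_0^{\otimes n})^{2}}{f_0^{\otimes n}}\,dy_{1:n}dv_{1:n}}\,.
\end{equation*}
Expanding the square in the integrand and using that $f_{0}^{\otimes n}$ and $f_{1}^{\otimes n}$ both integrate to one, the right-hand side becomes $\tfrac{1}{2}\sqrt{I_{n}-1}$ with $I_{n}:=\int (f_{1}^{\otimes n})^{2}/f_{0}^{\otimes n}$.

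Next I would tensorize: since $f_{i}^{\otimes n}(y_{1:n},v_{1:n})=\prod_{j=1}^{n} f_{i}(y_{j},v_{j})$, Fubini gives $I_{n}=I_{1}^{n}$, where
\begin{equation*}
I_{1}=\int \frac{f_{1}(y,v)^{2}}{f_{0}(y,v)}\,dy\,dv
=\int f_{0}(y,v)\,Z(q,y,v)^{2}\,dy\,dv
=\E\!\left[Z(q,X,V)^{2}\right].
\end{equation*}
Using the affine form $Z(q,X,V)=1+q\rho(X,V)$ from (\ref{eq:Z-def-with-rho}) and expanding,
\begin{equation*}
I_{1}=1+2q\,\E[\rho(X,V)]+q^{2}\,\E[\rho(X,V)^{2}]=1+q^{2}C_{0},
\end{equation*}
where the cross term vanishes by the hypothesis $\E[\rho(X,V)]=0$ and $C_{0}$ is as defined in (\ref{eq:C0}). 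Combining yields $T_V(f_0^{\otimes n},f_1^{\otimes n})\le \tfrac{1}{2}\sqrt{(1+q^{2}C_{0})^{n}-1}$, which is (\ref{ub:TV}).

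I do not anticipate a real obstacle: the argument is a clean Cauchy--Schwarz plus tensorization, and the hypothesis $\E[\rho]=0$ is used in exactly one place (killing the linear-in-$q$ term). The only subtle point is that one must verify $C_{0}<\infty$ before writing $I_{1}=1+q^{2}C_{0}$; this is assumed in the statement of the lemma, and guarantees that the Cauchy--Schwarz upper bound is finite and nontrivial.
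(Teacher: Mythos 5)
Your proposal is correct and is essentially the paper's proof in slightly different clothing: the Cauchy--Schwarz step $\int|f_1^{\otimes n}-f_0^{\otimes n}|\le\sqrt{\int (f_1^{\otimes n}-f_0^{\otimes n})^2/f_0^{\otimes n}}$ is the same $L^1\le L^2$ bound the paper writes as $\E|1-\prod_j Z|\le\sqrt{\E[(1-\prod_j Z)^2]}$, and both then tensorize and use $\E[\rho(X,V)]=0$ exactly once to reduce $\E[Z^2]$ to $1+q^2C_0$. One small correction to your closing remark: the lemma does not actually assume $C_0<\infty$ (that hypothesis appears only in \prettyref{prop:achievability-IEBP}), but the inequality \eqref{ub:TV} is vacuously true when $C_0=\infty$, so your argument goes through unchanged.
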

\begin{proof}  Let $\{(X_j, V_j)\}_j$ be iid rvs with pdf $f_0$.
 By \eqref{eq:total-variation},
\begin{eqnarray}
2T_V\left(f_0^{\otimes n}, f_1^{\otimes n}\right) &=&\int_{[0,\infty)^{2n}} \left| \prod_{j=1}^n f_0(y_j,v_j)- \prod_{j=1}^n f_1(y_j,v_j)\right| \prod_{j=1}^n dy_j dv_j\nonumber\\
&=& \int_{[0,\infty)^{2n}}  \prod_{j=1}^n f_0(y_j,v_j)\left| 1- \prod_{j=1}^n Z(q,y_j, v_j) \right| \prod_{j=1}^n dy_j dv_j\nonumber\\
&=&\E\left[\, \left|1-\prod_{j=1}^n Z(q,X_j,V_j)\right|\,\right], \label{TV}
\end{eqnarray}
where the second equality follows from \eqref{eq:Z-def}, and (\ref{TV}) follows from the fact that $f_0$  given in (\ref{value-f0}) is the joint pdf of the
independent rvs $X$ and $V$.

Using the inequality $\E|U|\leq \sqrt{E[U^2]}$ in (\ref{TV}) yields
\begin{eqnarray}
\left(2 T_V\left(f_0^{\otimes n},f_1^{\otimes n}\right)\right)^2
&\leq&  1-2\E\left[ \prod_{j=1}^n Z(q,X_j,V_j)\right]+ \E\left[ \left(\prod_{j=1}^n Z(q,X_j,V_j)\right)^2 \right]\nonumber\\
&=& 1- 2 \left[\E Z(q,X,V)\right]^n +\left[\E[Z(q,X,V)^2]\right]^n \label{inq-lem}\\
&=& -1 + \left(1+2q\E[\rho(X,V)] + q^2 \E[\rho(X,V)^2]\right)^n\nonumber\\
&=& -1 + \left(1+ q^2 \E[\rho(X,V)^2]\right)^n,\nonumber
\end{eqnarray}
where we have used the value of $Z(q,x,v)$ obtained  in \eqref{eq:Z-rho} in Appendix \ref{app:Zqxv} and the assumption that $\E[\rho(X,V)]=0$ to establish the last two identities. 

\end{proof}

We are now in position to prove Proposition \ref{prop:achievability-IEBP}.
\begin{proof}[Proof of  \prettyref{prop:achievability-IEBP}] In order to compute the covert throughput, assume that Willie uses an optimal detector for the sufficient statistic 
$\{(Y_j,V_j),j=1,\ldots,n\}$. Let
\[
q=\frac{\delta}{\phi(n)},
\]
with $\delta\in (0,1]$ and $\phi:\{1,2,\ldots\}\to [1,\infty)$, so that by (\ref{value-Tn})
\begin{equation}
\label{eq:Tn}
T(n)=\frac{\delta n}{\phi(n)}.
\end{equation}

First consider the case $\E[\rho(X,V)] = 0$. 
Note that $T(n)= \mcO(\sqrt{n})$ implies 
\begin{equation} 
\label{liminf-phi}
\limsup_n\frac{\sqrt{n}}{\phi(n)}<\infty.
\end{equation}
By Lemma \ref{lem:up-TV}
\begin{align*}
\sup_{k\geq n}T_V\left(f_0^{\otimes k},f_1^{\otimes k}\right)\leq  \sup_{k\geq n} \frac{1}{2}\sqrt{\left(1+\frac{\delta^2C_0}{\phi(k)^2}\right)^k -1}\sim \frac{\delta \sqrt{C_0}}{2}\,\sup_{k\geq n}  \frac{k }{\phi(k)^2}, \quad\text{as } n\to\infty
 \end{align*}
as  $\lim_n\phi(n)=\infty$ by Lemma \ref{lem:technic} in Appendix \ref{app:A0}.
Therefore,
\begin{equation}
\label{inq-achiev}
\limsup_n T_V\left(f_0^{\otimes n},f_1^{\otimes n}\right)\leq  \frac{\delta \sqrt{C_0}}{2} \left(\limsup_n\frac{\sqrt{n}}{\phi(n)}\right)^2.
\end{equation}
By making $\delta$ small enough,  $\limsup_nT_V(f_0^{\otimes n},f_1^{\otimes n})$ can be made arbitrarily small. 
We then conclude from (\ref{covert-criterion}) that Alice is covert when $T(n)= \mcO(\sqrt{n})$, which completes the proof for the case $\E[\rho(X,V)]=0$.

Now consider the case $\E[\rho(X,V)]\ne 0$. Note that
$T(n)=\frac{\delta n}{\phi(n)} = \mcO(1)$ implies there exist $k>0$ and $n_0$ such that for all $n\geq n_0$, $0\leq \frac{n}{\phi(n)}\leq k$.
Using inequality  (\ref{inq-lem}) and the definition of $Z(q,x,v)$ in (\ref{eq:Z-def-with-rho}) gives
\begin{align}
&\quad\left(2 T_V\left(f_0^{\otimes n},f_1^{\otimes n}\right)\right)^2\nonumber\\
 &\leq  1- 2e^{n\log \left(1+\frac{\delta}{\phi(n)}\E[\rho(X,V)]\right)} +e^{n\log \left(1+\frac{2\delta}{\phi(n)} \E[\rho(X,V)] + \frac{\delta^2}{\phi(n)^2} C_0^2 \right)}\nonumber\\
&\sim 1 -2e^{\E[\rho(X,V)] \frac{\delta n}{\phi(n)}} + e^{2  \E[\rho(X,V)]  \frac{\delta n}{\phi(n)} +\delta^2 C_0^2 \frac{n}{\phi(n)^2 }},
\label{inq2-lem}
\end{align}
as $n\to\infty$. Since $\frac{n}{\phi(n)}$ and $\frac{n}{\phi(n)^2}$ are bounded away from infinity as $n\to\infty$, we see that the r.h.s. of (\ref{inq2-lem}) can be made arbitrarily close to $0$ by letting
$\delta \to 0$. We then conclude from (\ref{covert-criterion}) that Alice is covert when $T(n)= \mcO(1)$, which completes the proof.
\end{proof}


\section{Proof of \prettyref{prop:converse-IEBP}}
\label{sec:converse-IEBP-proof}

The proof uses Corollary \ref{cor:non_covert-Y}.  Take $q=\frac{1}{\phi(n)}$ with $\limsup_n q>0$ or, equivalently, $\liminf_n \phi(n)<\infty$.

\textcolor{black}{Assume that Willie uses an optimal detector for the statistic $\{Y_j\}_j$ so that Corollary \ref{cor:non_covert-Y} applies.
Recall (cf. Section \ref{sec:preliminary-results}) that $\widetilde f_1(y)$ (resp. $g_1(y)$) is the pdf of $Y$ under $H1$ (resp. $H_0$), with
$\widetilde{Z}(q,y)=\frac{\widetilde f_1(y)}{g_1(y)}$ being the associated likelihood ratio.}

\textcolor{black}{ We first derive properties of $\widetilde{Z}(q,y)$, to be used in this proof and in the proof of Proposition \ref{prop:phase-transition-exponential}.}

\textcolor{black}{We claim that 
\begin{equation}
\label{eq:Ztilde}
\widetilde Z(q,y) =\int_0^\infty \lambda e^{-\lambda v}  Z(q,y,v)dv.
\end{equation}
Indeed by  (\ref{eq:Z-def}) and (\ref{value-f0})
\[ 
\int_0^\infty \lambda e^{-\lambda v}  Z(q,y,v)dv =\frac{1}{g_1(y)}   \int_0^\infty f_1(y,v) dv=\frac{\widetilde f_1(y)}{g_1(y)}=\widetilde Z(q,y),
\]
from the definition of $\widetilde Z(q,y)$ in (\ref{eq:Ztilde-def}).} Hence by \eqref{eq:Z-def-with-rho}
\begin{eqnarray}
\label{eq:Ztilde-rho}
\widetilde Z(q,y)= 1+q\widetilde \rho(y),
\end{eqnarray}
with 
\begin{eqnarray}
\label{eq:rho-tilde}
\widetilde \rho(y)&:=&\int_0^\infty \lambda e^{-\lambda v} \rho(y,v) dv\nonumber\\ 
&=&\frac{1}{g_1(y)} \int_0^\infty \lambda e^{-\lambda v} \int_0^y g_1(u) g_2(v+y-u) du dv -  \int_0^\infty \lambda e^{-\lambda v} \overline{G}_2(v) dv \quad \hbox{by }(\ref{eq:rho})\nonumber\\
&=& \frac{(g_1*\hat g_2)(y)}{g_1(y)} - p,
\end{eqnarray}
by using the definition of $p$ in (\ref{eq:interference-prob}), and where 
\begin{equation}
\label{eq:g2-hat}
\hat g_2(t):=\int_0^\infty \lambda e^{-\lambda v} g_2(v+t)dv.
\end{equation}
We are now ready to prove \prettyref{prop:converse-IEBP}. Recall that $X$ is a rv with pdf $g_1$.
We have
\begin{align}
\E\left[\sqrt{\widetilde Z(q,X)}\right] &= \int\sqrt{\prod_{i=0}^1\widetilde f_i(x)} dx
\leq \sqrt{ \prod_{i=0}^1\int \widetilde f_i(x)   dx}=1,\label{cauchy-inq}
\end{align}
by Cauchy-Schwarz inequality.
Equality holds in (\ref{cauchy-inq}) if and only if (see e.g. \cite[p. 14]{CS-inq})
$\widetilde f_1(x) = c \widetilde f_0(x)$
for some constant $c>0$. Since both $\widetilde f_0$ and $\widetilde f_1$ are densities, integrating over $[0,\infty)$ yields $c=1$, which is equivalent to $q=0$
from \eqref{eq:Ztilde-def} and \eqref{eq:Ztilde-rho}. This shows that $\E\left[\sqrt{\widetilde Z(q,X)}\right]<1$ \textcolor{black}{if and only if} $0<q\leq 1$.

Since $\liminf_n \phi(n):=d<\infty$ by assumption,  there exists a subsequence of $\{\phi(n)\}_n$, say $\{\phi(k_n)\}_n$, such that $\phi(k_n)\geq d$
with $\lim_n\phi(k_n)=d$. 

Let $M:=\sup_{1/d\leq q\leq 1} \E\left[\sqrt{\widetilde Z(q,X)}\right]$. Note $\sqrt{\widetilde Z(q,X)} = \sqrt{1+q\widetilde \rho(X)} \le \sqrt{1+|\widetilde \rho(X)|} \le 1+|\widetilde \rho(X)|$. By (\ref{eq:rho-tilde}), $\E[|\rho(X)|] \leq \int (g_1*\hat g_2)(t) dt +p$ and $g_1*\hat g_2$ is integrable as both $g_1$ and $\hat g_2$ are integrable. The Dominated Convergence Theorem then guarantees that the function $q\mapsto \E\left[\sqrt{\widetilde Z(q,X)}\right]$ is continuous. Since $\E\left[\sqrt{\widetilde Z(q,X)}\right]<1$ for all $q\in [1/d,1]$ as shown above, we have $M < 1$. Therefore,
\[
\E\left[\sqrt{\widetilde Z(1/\phi(k_n),X)}\right] \leq M < 1
\]
for all $n$. As  a result
\[
\lim_n \left(\E\left[\sqrt{\widetilde Z(1/\phi(k_n),X)}\right]\right)^n=0,
\]
which implies from Corollary \ref{cor:non_covert-Y} that Alice's insertions are not covert when $\liminf_n \phi(n)<0$, or equivalently when $\limsup_n q=\infty$.


\section{Proof of  \prettyref{prop:phase-transition-exponential}}
\label{sec:proof-phase-transition-exponential}
 
Throughout this section, we assume  that Alice and Willie job service times are exponentially distributed with rate
$\mu_2$ and $\mu_1$, respectively, namely, $g_i(x)=\mu_i e^{-\mu_i x}$ for $i=1,2$. 

The proof of  \prettyref{prop:phase-transition-exponential} relies on Corollaries \ref{cor:covert-non_covert} and  \ref{cor:non_covert-Y}, and Lemma \ref{lem:expansion-Y} below.
Before stating the latter, let us introduce some notation.

Let $\mu_1=r\mu $ and $\mu_2=\mu$. For $r\not=1$, define $\beta=\frac{r}{r-1}$ and note that $r=\frac{\beta}{\beta-1}$ and $1-\beta=\frac{1}{r-1}$. 
Let $X_r$ denote an exponential rv with rate $\mu r$.\\

For  $\theta \in [0,1]$, $x\geq 0$, define
\begin{equation}
\label{def-Xi}
\Xi(\theta,x)=\left\{\begin{array}{ll}
1+\theta(\mu  x-1) &\mbox{if $r=1$}\\
1+\theta \left(\frac{e^{(r-1)\mu x}}{r-1}-\beta\right)&\mbox{if $r\not=1$.}
                       \end{array}
              \right.
\end{equation}
By specializing $Z(q,x,v)$ in (\ref{eq:Z-def-with-rho})  to the case where $g_i(x)=\mu_i e^{-\mu_i x}$ for $i=1,2$,  we obtain from (\ref{eq:rho-g2-exp}) that

\begin{equation}
\label{value-Z-exp}
Z(q,x,v) = \Xi(q e^{-\mu v},x),\quad \forall q\in [0,1],\,x\geq 0, v\geq 0.
\end{equation}
One the other hand, by (\ref{eq:Ztilde}) and the fact that $\Xi(\theta,x)$ is linear in $\theta$,
\begin{equation}
\label{value-tildeZ-exp}
\widetilde Z(q,x) = \int_0^\infty \lambda e^{-\lambda v} \Xi(q e^{-\mu v},x) dv = \Xi(\int_0^\infty q\lambda e^{-(\lambda+\mu) v} dv,x) = \Xi(pq,x),\quad \forall q\in [0,1],\,x\geq 0,
\end{equation}
where we have used that  $p=\lambda/(\mu_2+\lambda)$  (see (\ref{eq:interference-prob})) when Alice job service times are exponentially distributed.

For $\theta \in [0,1]$, define 
\begin{equation}
\label{def:xir}
\xi_r(\theta):= \frac{(\beta-1)\theta}{1-\beta\theta },
\end{equation}
for $r\geq 2$ (i.e. $1<\beta\leq 2$),  and 
\begin{equation}
\label{def:Ibeta}
I_{\beta}:=\beta \int_0^\infty \frac{1+\frac{1}{2}t-\sqrt{t+1}}{t^{\beta+1}}dt,
\end{equation}
for $r>2$. Since $\beta\in (1,2)$ when $r>2$, the generalized integral $I_\beta$ is finite and positive.

\begin{lemma}
\label{lem:expansion-Y} 
For $\theta \in [0,1]$, define 
\begin{equation}
F_r(\theta):=\left\{ \begin{array}{ll}
\frac{1-r}{4(r-2)}\theta+o(\theta)^2) &\mbox{if $0<r<1$}\\
 \frac{1}{4}  \xi_2^2(\theta)\log \xi_2(\theta)+ \Delta_2(\xi_2(\theta)) &\mbox{if $r=2$}\\
-  I_\beta \xi_r^\beta(\theta)+ \Delta_r(\xi_r(\theta)) &\mbox{if $r>2$,}
\end{array}
\right.
\label{def-Fr}
\end{equation}
where, for $t>0$,
\begin{equation}
\Delta_r(t):=\left\{ \begin{array}{ll}
o(t^2 \log t) &\mbox{if $r=2$}\\
o(t^\beta) &\mbox{if $r>2$.}
\end{array}
\right.
\label{def-Gr}
\end{equation}
Then, for $r\in (0,1)\cup [2,\infty)$, 
\begin{equation}
\E\left[\sqrt{ \Xi(\theta,X_r)}\right]= 1+ F_r(\theta),\quad 0\leq \theta\leq 1.
\label{expansion-H}
\end{equation}
\end{lemma}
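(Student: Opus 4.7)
The cornerstone of the argument is the substitution $U:=e^{(r-1)\mu X_r}$. Using $1/(r-1)=\beta-1$ one gets the clean factorisation
\[
\Xi(\theta,X_r)=1+\theta\bigl((\beta-1)U-\beta\bigr)=(1-\beta\theta)\bigl(1+\xi_r(\theta)U\bigr),
\]
in which $\xi_r(\theta)$ from \eqref{def:xir} appears spontaneously. For $r>1$ the variable $U$ has the Pareto density $\beta u^{-\beta-1}\bone\{u\ge 1\}$, while for $0<r<1$, $U\in(0,1]$ with density $-\beta u^{-\beta-1}$. Taking expectations and pulling out the deterministic factor gives
\[
\E\bigl[\sqrt{\Xi(\theta,X_r)}\bigr]=\sqrt{1-\beta\theta}\cdot\E\bigl[\sqrt{1+\xi_r(\theta)U}\bigr],
\]
reducing the problem to the small-$\xi$ asymptotics of the one-dimensional expectation $\E[\sqrt{1+\xi U}]$.

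\textbf{Case $0<r<1$.} Here $U\le 1$ so $\rho(X_r):=(\beta-1)U-\beta$ is bounded. Remark~\ref{rmk:rho-exp} (or a direct check using $\E[e^{(r-1)\mu X_r}]=r$) gives $\E[\rho(X_r)]=0$, and $\E[e^{2(r-1)\mu X_r}]=r/(2-r)$ yields a closed form for $\E[\rho(X_r)^2]$. Integrating the uniform expansion $\sqrt{1+t}=1+t/2-t^2/8+O(t^3)$ on the bounded range of $\theta\rho(X_r)$ term by term delivers $F_r(\theta)=c_r\theta^2+o(\theta^2)$ with the coefficient $c_r$ of the form prescribed in the statement.

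\textbf{Cases $r\ge 2$.} After the change of variable $s=\xi_r(\theta)u$,
\[
\E\bigl[\sqrt{1+\xi_r(\theta)U}\bigr]=\beta\,\xi_r(\theta)^{\beta}\!\int_{\xi_r(\theta)}^{\infty}\!\sqrt{1+s}\,s^{-\beta-1}\,ds.
\]
Decompose $\sqrt{1+s}=(1+s/2)-h(s)$ with $h(s):=1+s/2-\sqrt{1+s}\ge 0$. The polynomial piece integrates exactly to $1+(r/2)\xi$, and, multiplied by $\sqrt{1-\beta\theta}=1-\beta\theta/2+O(\theta^2)$ together with $\xi_r(\theta)=(\beta-1)\theta+O(\theta^2)$, the linear term cancels (because $r(\beta-1)=\beta$), leaving a contribution of $1+O(\theta^2)$. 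For $r>2$ (so $1<\beta<2$), $h(s)\sim s^2/8$ at $0$ and $h(s)\sim s/2$ at $\infty$ make $\int_0^\infty h(s)s^{-\beta-1}ds=I_\beta/\beta$ finite, while the truncation error $\beta\xi^\beta\int_0^\xi h(s)s^{-\beta-1}ds=O(\xi^2)$ is $o(\xi^\beta)$, producing $F_r(\theta)=-I_\beta\,\xi_r(\theta)^\beta+o(\xi_r(\theta)^\beta)$.

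\textbf{Main obstacle: the critical case $r=2$.} With $\beta=2$ the integral $\int_0^\infty h(s)s^{-3}ds$ diverges at the origin like $\int 1/(8s)\,ds$. Splitting at $s=1$ and using $h(s)=s^2/8-s^3/16+O(s^4)$ near $0$ gives $\int_\xi^1 h(s)s^{-3}ds=-\tfrac{1}{8}\log\xi+C+O(\xi)$; multiplying by $2\xi^2$ extracts the $\tfrac{1}{4}\xi^2\log\xi$ term of the statement (negative because $\log\xi<0$). The delicate step is verifying that \emph{every} residual contribution -- the $O(\theta^2)$ leftovers from $\sqrt{1-\beta\theta}$, the $O(\xi^2)$ constant-coefficient terms from the decomposition, and the $O(\theta)\cdot\xi^2\log\xi$ cross terms -- is genuinely absorbed into $\Delta_2(\xi_2)=o(\xi_2^2\log\xi_2)$, and for $r>2$ into $\Delta_r(\xi_r)=o(\xi_r^\beta)$. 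This error bookkeeping is where the precise forms of $\Delta_r$ in \eqref{def-Gr} crystallise, but once it is complete all three regimes follow from the single factorisation above combined with a Mellin-type analysis of a one-dimensional integral with a boundary singularity.
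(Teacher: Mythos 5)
Your proposal is correct, and it converges on the same one-dimensional integral $\int_{\xi}^{\infty}\sqrt{1+s}\,s^{-\beta-1}\,ds$ and the same decomposition $\sqrt{1+s}=(1+\tfrac{s}{2})-h(s)$ that the paper analyzes, so the asymptotics of $h$ near $0$ and $\infty$ and the constant $I_\beta$ coincide exactly. The route to that integral is, however, structurally cleaner than the paper's: the substitution $U=e^{(r-1)\mu X_r}$ (giving a Pareto variable for $r>1$, a power-law variable on $(0,1]$ for $r<1$) and the factorization $\Xi(\theta,X_r)=(1-\beta\theta)\bigl(1+\xi_r(\theta)U\bigr)$ let you treat all three regimes through the single pivot $\E[\sqrt{1+\xi U}]$ and a change of variable $s=\xi u$; the paper instead computes $\Pr(\sqrt{\Xi}>z)$ for $r<1$ and the density of $\Xi(\theta,X_r)$ for $r>1$ as two unrelated case-by-case calculations before arriving at the same place. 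For $0<r<1$ you also gain a shortcut: since $U\in(0,1]$ the variable $\rho(X_r)=(\beta-1)U-\beta$ is bounded, so the term-by-term Taylor expansion of $\sqrt{1+\theta\rho}$ under $\E$ is immediately legitimate, whereas the paper works through the tail integral and expands $x^{-\beta}/\sqrt{1-x}$. The error bookkeeping you flag is indeed the only delicate point and goes through exactly as you say, because $\theta^{2}=o(\theta^{\beta})$ for $1<\beta<2$ and $\theta^{2}=o(\theta^{2}\log\theta)$ for $\beta=2$; these are the same estimates used in the paper. One caveat worth acting on: when you actually evaluate the quadratic coefficient in the $0<r<1$ case you get $-\tfrac{1}{8}\E[\rho(X_r)^{2}]=-\tfrac{r}{8(2-r)}$, which does \emph{not} equal the printed constant $\tfrac{1-r}{4(r-2)}$ in the lemma (test $r=\tfrac{1}{2}$: the closed form $\tfrac{1}{3\theta}\bigl[(1+\theta)^{3/2}-(1-\theta)^{3/2}\bigr]=1-\tfrac{1}{24}\theta^{2}+O(\theta^{4})$ gives $-\tfrac{1}{24}$, not $-\tfrac{1}{12}$). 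The lemma's stated coefficient, which is only used downstream through its sign and finiteness, appears to carry a simplification error; trust your own computation rather than forcing agreement with the printed value.
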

The proof of Lemma \ref{lem:expansion-Y} is given in Appendix \ref{app:proof-key-lemma}.\\

Since $\xi_r(\theta)$ defined in (\ref{def:xir}) will only be evaluated at $\theta=qe^{-\mu v}$ with $q=\delta/\phi(n)$ and $\delta\in [0,1]$,  thereby yielding
 $\xi_r(\theta)=\frac{(\beta-1) \delta}{e^{\mu v}\phi(n)- \beta \delta}$, we omit the argument $\theta$ in  $\xi_r(\theta)$ to simplify notation.\\

We are now ready to prove Proposition \ref{prop:phase-transition-exponential}. Recall that at the beginning of an idle period, Alice inserts a job  with
probability $q(n)=\frac{\delta}{\phi(n)}$, $\delta\in (0,1]$, with 
\[
\lim_n \phi(n)=\infty.
\label{hyp-phi}
\]
Henceforth we drop the argument $n$ in $q(n)$. Furthermore 
\begin{equation}
\label{Tn-delta}
T(n)=nq = \frac{\delta n}{\phi(n)}
\end{equation}
is the expected number of Alice's insertions in $n$ W-BPs.

\subsection{Proof of \eqref{eq:achievability-IEBP-exp}}
We assume that Willie uses an optimal detector for the sufficient statistic $\{(Y_j,V_j)\}_j$, which allows us to apply Corollary \ref{cor:covert-non_covert}.

\subsubsection{ Case $\mu_1<2\mu_2$}  

The proof follows from \prettyref{prop:achievability-IEBP} since $\E[\rho(X,V)]=0$ when Alice and Willie job service times are exponentially distributed (cf. \prettyref{rmk:rho-exp})
and since 
$C_0<\infty$ when $\mu_1<2\mu_2$, as shown in Lemma \ref{lem:C-different-cdf}-(1).\\

\subsubsection{Case $\mu_1=2\mu_2$}  

Without loss of generality we assume in this section that 
\begin{equation}
\phi(n)\geq 8,\quad  \forall n\geq 1.
\label{hyp-phi}
\end{equation}
This assumption is motivated by the need to have $\log \phi(n)>2$ (for the proof of Lemma \ref{lem:nDn} in Appendix \ref{app:Dn}).

Recall that $X_2$ denotes an exponential rv with rate $2\mu$. By Lemma \ref{lem:expansion-Y}, 
\begin{eqnarray}
\left(\E\left[\sqrt{Z(\delta/\phi(n),X_2,V)}\right]\right)^n
&=&\left(1+\int_0^\infty \lambda e^{-\lambda v} \left(\frac{1}{4}  \xi_2^2\log \xi_2+ \Delta_2(\xi_2)\right) dv\right)^n\nonumber\\
&=&e^{n\log\left(1+ \int_0^\infty \lambda e^{-\lambda v} \,\xi_2^2\log \xi_2\times \left(\frac{1}{4}  + \frac{\Delta_2(\xi_2)}{\xi_2^2\log \xi_2}\right) dv\right)},
\label{Y-part-b0}
\end{eqnarray}
with $\Delta_2(z)=o(z^2\log z)$ and $\xi_2=\frac{\delta e^{-\mu v}}{\phi(n)-2 \delta e^{-\mu v}}>0$ for all $n\geq 1$ and for all $v\geq 0$ thanks to (\ref{hyp-phi}).
For $v\geq 0$ notice that $\xi_2>0$ for all $n$ and 
$\xi_2\to 0$ as $n\to\infty$.
Define
\begin{equation}
\label{def-Dn}
D_n:=\int_0^\infty \lambda e^{-(\lambda+2\mu) v} \frac{\log \xi_2}{(\phi(n)-2\delta e^{-\mu v})^2}  \left(\frac{1}{4}  + \frac{\Delta_2(\xi_2)}{\xi_2^2\log \xi_2}\right) dv,
\end{equation}
so that (\ref{Y-part-b0}) rewrites
\begin{equation}
\label{Y-part-b1}
\left(\E\left[\sqrt{Z(\delta/\phi(n),X_2,V)}\right]\right)^n =e^{n\log(1+\delta^2 D_n)}. 
\end{equation}
The proof of \eqref{eq:achievability-IEBP-exp} for $\mu_1=2\mu_2$ consists in showing that, as $n\to \infty$, the r.h.s. of (\ref{Y-part-b1})
can be made arbitrary close to one by selecting  $\delta$ small enough, and to apply (\ref{cor:covert}) in Corollary \ref{cor:covert-non_covert}.

The first step is to show that $D_n\to 0$ as $n\to \infty$. This result is shown in Lemma \ref{lem:Dn} in Appendix \ref{app:Dn}. Hence,
\begin{eqnarray}
\left(\E\left[\sqrt{Z(\delta/\phi(n), X_2,V)}\right]\right)^n&\sim_n &e^{\delta^2 n D_n}
\label{liminf-H}
\end{eqnarray}
from (\ref{Y-part-b1}). The second step is to show that $nD_n$ is bounded as $n\to\infty$.  This result is shown in Lemma \ref{lem:nDn}  in Appendix \ref{app:Dn}
under the condition that $T(n)=\mathcal{O}(\sqrt{n/\log n})$.\\

 The proof is concluded as follows: when  $T(n)=\mathcal{O}(\sqrt{n/\log n})$, by (\ref{liminf-H}) and Lemma \ref{lem:nDn}, 
 as $n\to\infty$,$ \left(\E\left[\sqrt{Z(\delta/\phi(n), X_2,V)}\right]\right)^n$ can be made arbitrarily close to one by taking 
$\delta$ small enough. The proof of  \eqref{eq:achievability-IEBP-exp} for $\mu_1=2\mu_2$ then follows from (\ref{cor:covert}) in Corollary \ref{cor:covert-non_covert}.

\subsubsection{Case $\mu_1>2\mu_2$} 
Fix $r>2$ so that $1<\beta<2$. 
By Lemma \ref{lem:expansion-Y}
\begin{eqnarray}
\left(\E\left[\sqrt{Z(\delta/\phi(n),X_r,V)}\right]\right)^n
&=&e^{n\log \left(1+\int_0^\infty\lambda e^{-\lambda v} \left(-  I_\beta \xi_r^\beta + \Delta_r(\xi_r)\right) dv\right)}\nonumber\\
&=& e^{n\log(1+\delta^\beta (\beta-1)^\beta E_n)},
\label{Y-part-c}
\end{eqnarray}
with $\Delta_r(z)=o(z^\beta)$, $\xi_r=\frac{ \delta (\beta-1)}{e^{\mu v}\phi(n)-\beta \delta}$, and
\begin{equation}
\label{def-En}
E_n:=-\int_0^\infty\lambda e^{-\lambda v} \frac{I_\beta- \Delta_r(\xi_r)/\xi_r^\beta}{(e^{\mu \beta}\phi(n)-\delta \beta)^\beta}  dv.
\end{equation}

Lemma \ref{lem:En} in Appendix \ref{app:Dn} states that $E_n\to 0$ as $n\to\infty$ and Lemma \ref{lem:nEn} in  Appendix \ref{app:Dn} states that 
$nE_n$ is bounded as $n\to\infty$ when $T(n)=\mathcal{O}(n^{\mu_2/\mu_1})$.
Therefore, cf (\ref{Y-part-c}),
\begin{equation}
\left(\E\left[\sqrt{Z(\delta/\phi(n),X_r,V)}\right]\right)^n \sim_n e^{\delta^\beta (\beta-1)^\beta n E_n},
\label{liminf-600}
\end{equation}
and when $T(n)=\mathcal{O}(n^{\mu_2/\mu_1})$ as $n\to\infty$  the r.h.s. of (\ref{liminf-600}) can be made arbitrarily close to one by selecting $\delta$ small enough.
The proof of \eqref{eq:achievability-IEBP-exp} for $\mu_1>2\mu_2$ then follows from (\ref{cor:covert}) in Corollary \ref{cor:covert-non_covert}.

\subsection{ Proof of \eqref{eq:converse-IEBP-exp}}
We assume that Willie uses an optimal detector for the statistic $\{Y_j\}_j$, which will allows us to use the non-covert criterion in  Corollary \ref{cor:non_covert-Y}.
Since the proofs in Sections \ref{case1}-\ref{case3}  will not depend on $\delta\in (0,1]$, we assume without loss of generality that  $\delta=1$, yielding  $q=\frac{1}{\phi(n)}$ and $T(n)=\frac{n}{\phi(n)}$.

\subsubsection{Case $\mu_1<\mu_2$}
\label{case1}
Assume that $T(n)=\omega(\sqrt{n})$, or equivalently
\begin{equation}
\label{assump-case-converse-a}
\lim_{n\to \infty} \frac{n}{\phi(n)^2}=\infty.
\end{equation}
Assume first that $0<r<1$.  From (\ref{value-tildeZ-exp}) and Lemma \ref{lem:expansion-Y} we obtain
\begin{eqnarray}
\left(\E\left[\sqrt{\widetilde Z(p/\phi(n),X_r)}\right]\right)^n&=& 
e^{n\log\left(1+\frac{p^2}{4}\left( \frac{1-r}{r-2}\right) /\phi(n)^2+o(1/\phi(n)^2)\right)}\nonumber\\
&& \sim_n  e^{\frac{n}{\phi(n)^2}\frac{p^2 }{4}\left(\frac{1-r}{r-2}\right)}\quad \hbox{as } \phi(n)\to\infty \hbox{ as } n\to\infty\nonumber\\
&&\sim_n 0,  \label{limit-tildeZ-a}
\end{eqnarray}
where the latter follows from (\ref{assump-case-converse-a}) together with  $\frac{1-r}{r-2}<0$ when $0<r<1$.
We invoke Corollary \ref{cor:non_covert-Y}  to conclude 
that Alice is not covert when $T(n)=\omega(\sqrt{n})$ and $0<r<1$.
 
 It remains to show that Alice is not covert for $1\leq r<2$ when $T(n)=\omega(\sqrt{n})$ with $\lim_n n/\phi(n)^2=\infty$.
 Without any additional effort, we will prove a stronger result (to be used in the proof of the case $\mu_1 = 2\mu_2$ of \eqref{eq:converse-IEBP-exp}) that Alice is not covert  when  $T(n)=\omega(\sqrt{n})$ and $r\geq 1$.
 By applying Lemma \ref{lem:stochastic-ordering} in Appendix \ref{app:B} to (\ref{value-tildeZ-exp}), we obtain
  \begin{equation}
 \label{inq-proof-a}
\E\left[\sqrt{ \widetilde Z(p/\phi(n), X_{r'})  }\right]\leq \E\left[\sqrt{\widetilde Z(p/\phi(n), X_r)}\right]
\end{equation}
for any $r'\geq r$.  Combining now (\ref{inq-proof-a}) and (\ref{limit-tildeZ-a}) readily yields
\[
\lim_{n\to\infty }\left(\E[\sqrt{\widetilde Z(p/\phi(n), X_{r'})}\right)^n =0
\]
for any $r'\geq 1$. Similarly to the case $0<r<1$ we then conclude from Corollary \ref{cor:non_covert-Y}  that Alice is not covert 
$T(n)=\omega(\sqrt{n})$ and $r\geq 1$. In summary, we have shown that  Alice is not covert  for all $r>0$ when $T(n)=\omega(\sqrt{n})$.\\

\subsubsection{Case $\mu_1=2\mu_2$} 
\label{case2}
Assume that $T(n)=\omega(\sqrt{n/\log n})$ or, equivalently, 
\begin{equation}
\lim_n \frac{\phi(n)}{\sqrt{n\log n}}=0.
\label{limit-phi-proof-b}
\end{equation}
From (\ref{value-tildeZ-exp}) and Lemma \ref{lem:expansion-Y} 
\begin{equation}
\left(\E\left[\sqrt{\widetilde Z(p/\phi(n),X_2)}\right]\right)^n= e^{n\log\left(1 + \frac{1}{4}  \xi_2^2\log \xi_2 + o(\xi_2^2 \log \xi_2)\right)},
\label{expansion-tildeZ-proof-b}
\end{equation}
with $\xi_2=\frac{p}{\phi(n)-2p}$. 
Since $\xi_2\sim_n 0$ when $\lim_{n}\phi(n)=\infty$,  we have 
\[
\xi_2\log \xi_2 \to 0 \quad \hbox{as } n\to \infty. 
\]
Therefore, from (\ref{expansion-tildeZ-proof-b}),
\begin{equation}
\left(\E\left[\sqrt{\widetilde Z(p/\phi(n),X_2)}\right]\right)^n\sim_n  e^{\frac{n}{4}  \xi_2^2\log \xi_2}.
\label{expansion-2-tildeZ-proof-b}
\end{equation}
We have proved in the case $\mu_1 < 2\mu_2$ of \eqref{eq:converse-IEBP-exp} that Alice is not covert  for all $r>0$ when $T(n)=\omega(\sqrt{n})$. As a result, it suffices to focus
on $T(n)$ satisfying (\ref{limit-phi-proof-b}) when $T(n)\not= \omega(\sqrt{n})$. The latter is equivalent to $\phi(n)=\Omega(\sqrt{n})$, that is,
\begin{equation}
\liminf_n \frac{\phi(n)}{\sqrt{n}}>0.
\label{limit2-phi-proof-b}
\end{equation}
We have
\begin{eqnarray}
n \xi_2^2 \log \xi_2 &=&\frac{p^2}{\left(\frac{\phi(n)}{\sqrt{n\log n}}-\frac{2p}{\sqrt{n\log n}}\right)^2}
\left(\frac{\log p}{\log n} - \frac{\log(\phi(n)-2p)}{\log n}\right)\nonumber\\
&\sim_n& \frac{-p^2}{\left(\frac{\phi(n)}{\sqrt{n\log n}}-\frac{2p}{\sqrt{n\log n}}\right)^2}
\times \frac{\log(\phi(n)-2p)}{\log n}.
\label{xi2-log-xi2}
\end{eqnarray}
 By (\ref{limit-phi-proof-b}) the first factor in the r.h.s. of (\ref{xi2-log-xi2}) converges to $-\infty$ as $n\to\infty$. 
Let us focus on the second factor. We have
\begin{eqnarray*}
 \frac{\log(\phi(n)-2p)}{\log n} = \frac{1}{2}+ \frac{\log\left(\frac{\phi(n)-2p}{\sqrt{n}}\right)}{\log n}\sim_n  \frac{1}{2}+ \frac{\log\left(\frac{\phi(n)}{\sqrt{n}}\right)}{\log n}.
\end{eqnarray*}
Assumption (\ref{limit2-phi-proof-b}) ensures that $\frac{\log\left(\frac{\phi(n)}{\sqrt{n}}\right)}{\log n}\to 0$ as $n\to\infty$ and
\[
 \frac{\log(\phi(n)-2p)}{\log n} \to \frac{1}{2}\quad \hbox{as }n \to\infty.
 \]
In summary, we have shown that $n \xi_2^2 \log \xi_2\to-\infty$ as $n\to\infty$ which, in turn, implies from (\ref{expansion-2-tildeZ-proof-b}) that
$\lim_n \left(\E\left[\sqrt{\widetilde Z(p/\phi(n),X_2)}\right]\right)^n=0$. We conclude from Corollary \ref{cor:non_covert-Y} 
that Alice is not covert if $r=2$ and $T(n)=\omega(\sqrt{n/\log n})$.\\

\subsubsection{Case $\mu_1>2\mu_2$} 
\label{case3}
Assume that $T(n)=n/\phi(n)=\omega(n^{\mu_2/\mu_1})$ or, equivalently, 
\begin{equation}
\lim_{n} \frac{\phi(n)}{n^\beta}=0.
\label{limit-n-beta}
\end{equation}
Let $r>2$ so that $\beta \in (1,2)$.
From (\ref{expansion-H}),
\begin{eqnarray}
\left(\E\left[\sqrt{\widetilde Z(p/\phi(n),X_r)}\right]\right)^n &=& e^{n\log \left(1-  I_\beta \xi_r^\beta + o(\xi_r^\beta)\right)}\nonumber\\
&\sim_n& e^{-  I_\beta n \xi_r^\beta },
\label{tildeZ-c}
\end{eqnarray}
since $\xi_r=\frac{(\beta-1)p}{\phi(n)-\beta p} \to 0$ when $\lim_n\phi(n)=\infty$. We have
\begin{eqnarray*}
n\xi_r^\beta  =\frac{(\beta-1)p)^\beta}{\left(\frac{\phi(n)}{n^\beta}-\frac{\beta p}{n^\beta}\right)^\beta}\to +\infty \quad\hbox{as }n\to\infty.
\end{eqnarray*}
Introducing the above limit in (\ref{tildeZ-c}) and using the finiteness and positiveness of $I_\beta$ for $\beta\in (1,2)$, gives
$\lim_n \left(\E\left[\sqrt{\widetilde Z(p/\phi(n),X_r)}\right]\right)^n=0$, which shows by using again 
Corollary \ref{cor:non_covert-Y}  that Alice is not covert if $r>2$ and $T(n)=\omega({n^{\mu_2/\mu_1}})$. \\

This concludes the proof of Proposition \ref{prop:phase-transition-exponential}.


\section{Insert-at-Idle policy} 
\label{sec:II}

In this section, we consider the variant of the IEBP policy where each time the server idles, Alice inserts a job with probability $q$ and stops with probability $\bar q$ (before she tries again at the end of a new W-BP). We call this policy the {\em Insert-at-Idle (II)} policy.
The difference between the IEBP and II policies is that under the former Alice may only insert one job between the end of a W-BP and the start of the next W-BP, whereas
under the II policy she may insert more than one job during this time period.

We will show that when Alice job service times are exponentially distributed all covert/non-covert results obtained under the IEBP policy hold under the II policy. The intuition behind this
is that when Alice job service times are exponentially distributed, Willie sees "the same system behavior" under either policy; indeed,  under either policy a job of his can interfere with at most one Alice job in a W-BP, whose remaining service time is exponentially distributed.

Throughout this section quantities with the subscript ``$+$'' refer to the II policy.
Let $Y_{+,j}$ be the reconstructed service time of the first Willie job in the $j$-th W-BP, and  $V_{+,j}$ the duration of the idle period between the $(j-1)$-th and the $j$-th W-BPs. 
The rvs $(Y_{+,j},V_{+,j})$, $j=1,\ldots,n$, are iid, and we denote by $(Y_+,V_+)$ a generic element with the same distribution.

The argument in \prettyref{sec:IEBP} to prove that $\{Y_j,V_j\}_j$ is a sufficient statistics under the IEBP policy can be reproduced 
to argue that $\{Y_{+,j},V_{+,j}\}_j$ is a sufficient statistics; this is the case, as, similar to the IEBP policy, only the first Wille job in a W-BP may interfere with 
an Alice job under the II policy.

Introduce $f_{+,i}$  the pdf of $(Y_+,V_+)$ under $H_i$ for $i=0,1$, so that the joint pdf of $\{(Y_{+,j},V_{+,j})\}_{j=1}^n$ under $H_i$ is
$f_{+,i}^{\otimes n}$.
Similarly, let $\widetilde f_{+,i}$ be the pdf of $Y_+$ under $H_i$, $i=0,1$, with $\widetilde f^{\otimes n}_{+,i}$  the pdf under $H_i$ of the iid rvs $\{Y_{+,j}\}_{j=1}^n$.

Clearly, $f_{+,0}(x,v)=f_0(x,v)=\lambda e^{-\lambda v}g_1(x) $ (cf.  \eqref{value-f0}) and $\widetilde f_{+,0}(x)=\widetilde f_0 (x)= g_1(x)$.

The following lemma is proved in Appendix \ref{app:D0}.
\begin{lemma}[pdfs $f_{+,1}$ and $\widetilde f_{+,1}$ under the II$_+$ policy]\hfill
\label{lem:II}

For any pdf $g_1$ and $g_2(x)=\mu_2 e^{-\mu_2 x}$,
\begin{eqnarray}
f_{+,1}(x,v)&=& \lambda e^{-\lambda v} g_1(x) \left[1+ qe^{-\mu_2\bar q v}\left( \frac{(g_1*g_2)(x)}{g_1(x)}-1\right)\right]\label{lem:f+1xv}\\
\widetilde f_{+,1}(x)&=&  g_1(x) \left[1+ \frac{pq}{1-\bar p q}\left(\frac{(g_1*g_2)(x)}{g_1(x)}-1\right)\right],\label{lem:f+1x}
\end{eqnarray}
for all $x\geq 0$, $v\geq 0$.
\end{lemma}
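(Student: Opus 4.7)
The plan is to decompose the joint law of $(Y_+,V_+)$ under $H_1$ by conditioning on whether an Alice job happens to be in service at the instant the next Willie job arrives.

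First I would note that $V_+=A_{M_{j-1}+1}-D_{M_{j-1}}$ is just the interarrival time to the next point of the Poisson process of Willie arrivals after the last departure $D_{M_{j-1}}$, so $V_+\sim\mathrm{Exp}(\lambda)$ with density $\lambda e^{-\lambda v}$, independently of all service times and of Alice's coin flips. In particular, the event ``Alice has a job in service at time $v$ after $D_{M_{j-1}}$'' (computed as if Willie never arrived) is independent of Willie's next arrival being at time $v$, which is what lets us factor the joint density of $(Y_+,V_+)$ as $\lambda e^{-\lambda v}$ times a conditional density for $Y_+$ given $V_+=v$.

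The central computation is therefore $p(v):=\Prob(\text{an Alice job is in service at time }v)$ under the II policy. I would model Alice's idle-period dynamics as a two-state absorbing continuous-time Markov chain with states ``busy'' and ``stopped'': at time $0$ she starts ``busy'' with probability $q$ and ``stopped'' with probability $\bar q$; from ``busy'' she exits at rate $\mu_2$, and at each such exit she either launches a fresh $\mathrm{Exp}(\mu_2)$ job (probability $q$, back to ``busy'') or absorbs (probability $\bar q$). The thinned absorption rate is $\mu_2\bar q$, yielding $p(v)=q\,e^{-\mu_2\bar q v}$. The same formula drops out of the renewal equation $p(v)=qe^{-\mu_2 v}+q\int_0^v\mu_2 e^{-\mu_2 u}p(v-u)\,du$ via a Laplace transform. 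By the memoryless property of the exponential, on the event that an Alice job is in service at time $v$ its residual service time is again $\mathrm{Exp}(\mu_2)$, density $g_2$.

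The first Willie job of the next W-BP must wait out this residual Alice service (when present) before receiving his own $g_1$-distributed service, so $Y_+\mid V_+=v$ has density $(g_1*g_2)(x)$ with probability $p(v)$ and density $g_1(x)$ with probability $1-p(v)$. Assembling,
\[
f_{+,1}(x,v)=\lambda e^{-\lambda v}\bigl[p(v)(g_1*g_2)(x)+(1-p(v))g_1(x)\bigr]=\lambda e^{-\lambda v}g_1(x)\Bigl[1+qe^{-\bar q\mu_2 v}\Bigl(\tfrac{(g_1*g_2)(x)}{g_1(x)}-1\Bigr)\Bigr],
\]
which is \eqref{lem:f+1xv}. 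The marginal \eqref{lem:f+1x} then follows by integrating out $v$: the only nontrivial integral is $q\int_0^\infty\lambda e^{-(\lambda+\bar q\mu_2)v}dv=\tfrac{\lambda q}{\lambda+\bar q\mu_2}$, which, after dividing numerator and denominator by $\lambda+\mu_2$ and using $p=\lambda/(\lambda+\mu_2)$, $\bar p=\mu_2/(\lambda+\mu_2)$, simplifies to $\tfrac{pq}{1-\bar p q}$. The only delicate step I anticipate is justifying the Markov-chain description of Alice's idle-period activity and its independence from the Poisson arrival time $V_+$; once that is in place the remainder is bookkeeping.
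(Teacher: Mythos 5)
Your proof is correct and arrives at the same two identities, but by a noticeably cleaner route than the paper's. The paper (Appendix H) proceeds entirely by bookkeeping: it decomposes $\P(Y_+<x,V_+<v)$ over all events ``Alice's $i$-th inserted job is the one that collides with the first Willie job,'' writes out the resulting sums of convolution powers $g_2^{(i)}$, and only \emph{then} imposes $g_2=\mathrm{Exp}(\mu_2)$ to collapse the geometric series (\ref{G2-convol})--(\ref{g2-convol}) before differentiating. You instead exploit exponentiality at the outset: you model Alice's idle-period activity as a two-state absorbing chain whose absorption rate is the thinned rate $\mu_2\bar q$, giving directly $\Prob(\text{Alice busy at time }v)=qe^{-\mu_2\bar q v}$, and then invoke memorylessness to identify the residual Alice service as a fresh $\mathrm{Exp}(\mu_2)$ so that $Y_+\mid V_+=v$ is an $\{g_1,(g_1*g_2)\}$-mixture. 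The one step you flag as delicate --- independence of Alice's idle-period dynamics from the Poisson arrival epoch $V_+$ --- is exactly right and is justified by the same assumptions (Poisson arrivals, independence of all service times) that the paper uses implicitly when it writes the integral $\int_0^v \lambda e^{-\lambda t}\,\P(\cdot\mid V=t)\,dt$ in (\ref{int1000}). The trade-off: the paper's decomposition is general (it works for arbitrary $g_2$ up until the final specialization), whereas your argument is essentially wedded to exponential $g_2$ from the first line; but for this lemma, which assumes $g_2$ exponential anyway, your argument is shorter and exposes \emph{why} the parameter $\bar q\mu_2$ appears in the answer.
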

From now on $g_2(x)=\mu_2 e^{-\mu_2 x}$. Recall that $X$ is a rv with pdf $g_1$ and $V$ is an exponential rv with rate $\lambda$, independent of $X$.

By replacing $f_i$ by $f_{+,i}$, $i=0,1$, in the derivation of (\ref{inq-lem}) in the proof of Lemma \ref{lem:up-TV},  we obtain 
\begin{eqnarray}
\lefteqn{(2T_V)^2\left(f^{\otimes n}_{+,0},f^{\otimes n}_{+,1}\right)\leq
1-2\left(\E\left[\frac{f_{+,1}(X,V)}{f_{+,0}(X,V)}\right]\right)^n +\left(\E\left[\frac{f_{+,1}(X,V)^2}{f_{+,0}(X,V)^2}\right]\right)^n}\nonumber\\
&=& 1-2\left(1+\E\left[e^{-\mu_2 \bar q V}\right]\E\left[\frac{g_1*g_2(X)}{g_1(X)}-1\right]\right)^n
+\left(\E\left[\left(1+ qe^{-\mu_2\bar q V}\left( \frac{(g_1*g_2)(X)}{g_1(X)}-1\right)\right)^2\right]\right)^n\nonumber\\
&=&-1+\left(1+q^2 \E\left[ e^{-2\mu_2 \bar q V}\left(\frac{(g_1*g_2)(X)}{g_1(X)}-1\right)^2\right]\right)^n,
\label{ub:TV-variant0}
\end{eqnarray}
where the latter equality follows from the identity $\E\left[\frac{g_1*g_2(X)}{g_1(X)}\right]=\int_0^\infty g_1*g_2(x)dx=1$ and the independence of the rvs $X$ and $V$.
Define $C_1=\E\left[ e^{-2\mu_2 \bar q V}\left(\frac{(g_1*g_2)(X)}{g_1(X)}-1\right)^2\right]$. Inequality (\ref{ub:TV-variant0}) then rewrites
\begin{equation}
T_V\left(f^{\otimes n}_{+,0},f^{\otimes n}_{+,1}\right)\leq \frac{1}{2}\sqrt{(1+q^2C_1)^n-1}, \quad n\geq 1.
\label{ub:TV-variant}
\end{equation}

Let $T_{+,n}$ be the expected number of jobs inserted by Alice over $n$ W-BPs. Observe that
\begin{equation}
\label{def-T+n}
T_+(n)=n\times q(p+\bar p\bar q)\sum_{i\geq 0} (i+1) (\bar p q)^i = \frac{n q}{1-\bar p q}.
\end{equation}
Mimicking now the proof of \prettyref{prop:achievability-IEBP} with \eqref{ub:TV-variant} replacing \eqref{ub:TV}, and $T_+(n)$ in (\ref{def-T+n}) replacing $T(n)=nq$,  we obtain the following covert result:
\begin{proposition} \label{prop:achievability-II}
Assume that $g_2(x)=\mu_2 e^{-\mu_2 x}$ and  $C_1<\infty$.  Under the II policy, Alice can achieve a covert throughput of $T_+(n)=\mcO(\sqrt{n})$.
\end{proposition}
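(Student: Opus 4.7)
The plan is to transplant the proof of Proposition~\ref{prop:achievability-IEBP}, using the upper bound \eqref{ub:TV-variant} in place of \eqref{ub:TV} and the II throughput formula \eqref{def-T+n} in place of $T(n)=nq$. The one structural wrinkle compared to the IEBP case is that the constant $C_1$ depends on $q$ through the factor $e^{-2\mu_2 \bar q V}$. I would remove this complication at the outset by observing that $e^{-2\mu_2 \bar q V}\le 1$ for every $q\in[0,1]$, so $C_1$ is dominated uniformly in $q$ by
\[
\bar C \,:=\, \E\!\left[\left(\frac{(g_1\!*\!g_2)(X)}{g_1(X)}-1\right)^{\!2}\right],
\]
which is finite under the hypothesis $C_1<\infty$ (take, e.g., $q=1$). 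In particular $C_1\le \bar C$ uniformly along any sequence $q=q(n)\to 0$.

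Next I would assume that Willie uses the optimal detector for the sufficient statistic $(Y_{+,1:n},V_{+,1:n})$ and parameterize the insertion probability as $q=\delta/\phi(n)$ with $\delta\in(0,1]$ and $\phi:\{1,2,\dots\}\to[1,\infty)$. From \eqref{def-T+n}, the requirement $T_+(n)=\mathcal{O}(\sqrt{n})$ together with $\phi(n)\to\infty$ (which we may assume via Lemma~\ref{lem:technic} as in the proof of Proposition~\ref{prop:achievability-IEBP}) forces $\limsup_n \sqrt{n}/\phi(n)<\infty$, since the prefactor $1/(1-\bar p q)$ converges to $1$.

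Applying \eqref{ub:TV-variant} and the uniform bound $C_1\le \bar C$ then yields
\[
T_V\!\left(f_{+,0}^{\otimes n},f_{+,1}^{\otimes n}\right)\,\le\,\frac{1}{2}\sqrt{\left(1+\tfrac{\delta^2 \bar C}{\phi(n)^2}\right)^{\!n}-1}\,\sim\,\frac{\delta\sqrt{\bar C}}{2}\cdot\frac{\sqrt{n}}{\phi(n)}\quad\text{as }n\to\infty,
\]
where the asymptotic equivalence is obtained exactly as in the derivation leading to \eqref{inq-achiev}. Consequently,
\[
\limsup_n T_V\!\left(f_{+,0}^{\otimes n},f_{+,1}^{\otimes n}\right)\,\le\,\frac{\delta\sqrt{\bar C}}{2}\left(\limsup_n \frac{\sqrt{n}}{\phi(n)}\right),
\]
which can be made arbitrarily small by choosing $\delta$ sufficiently small. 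The covertness criterion \eqref{covert-criterion2} then delivers the claim.

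The only genuine hurdle is the coupling between $q$ and $C_1$, but as noted this is dispatched immediately by the trivial estimate $e^{-2\mu_2\bar q V}\le 1$; everything else is a line-by-line transcription of the IEBP argument, with $T(n)=nq$ replaced by $T_+(n)=nq/(1-\bar p q)$, the two being asymptotically equivalent in the regime $q\to 0$ that is relevant here.
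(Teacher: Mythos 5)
Your proposal is correct and follows essentially the approach the paper prescribes: the paper's ``proof'' of this proposition is in fact just the statement that one should mimic the proof of Proposition~\ref{prop:achievability-IEBP}, substituting~\eqref{ub:TV-variant} for~\eqref{ub:TV} and $T_+(n)=nq/(1-\bar p q)$ for $T(n)=nq$, and that is exactly what you carry out. One genuine refinement you provide that the paper omits: you spell out how to neutralize the $q$-dependence of $C_1$ via the crude but decisive estimate $e^{-2\mu_2\bar q V}\le 1$, so $C_1$ is uniformly dominated by the $q$-independent constant $\bar C=\E\bigl[\bigl(\frac{(g_1*g_2)(X)}{g_1(X)}-1\bigr)^2\bigr]$ along any sequence $q(n)\to 0$, which makes the transcription of the IEBP argument airtight. (The only infelicity, inherited directly from the paper's own proof of Proposition~\ref{prop:achievability-IEBP}, is that the displayed asymptotic equivalence $\tfrac{1}{2}\sqrt{(1+\delta^2\bar C/\phi(n)^2)^n-1}\sim \tfrac{\delta\sqrt{\bar C}}{2}\cdot\tfrac{\sqrt n}{\phi(n)}$ is only exact when $n/\phi(n)^2\to0$; when $n/\phi(n)^2\to L>0$ the correct limit is $\tfrac12\sqrt{e^{\delta^2\bar C L}-1}$, but this still tends to $0$ as $\delta\to0$, so the conclusion is unaffected.)
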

The lemma below gives the Hellinger distances between $f^{\otimes n}_{+,0}$ and $f^{\otimes n}_{+,1}$,  and between $\widetilde f^{\otimes n}_{+,0}$ and $\widetilde f^{\otimes n}_{+,1}$ when 
 Alice and Willie job service times are exponentially distributed.
 \begin{lemma}
 \label{lem-II-2}
 Assume that $g_i(x)=\mu_i e^{-\mu_i x}$, $i=0,1$. Then, for every $n\geq 1$,
 \begin{eqnarray}
 H\left(f^{\otimes n}_{+,0},f^{\otimes n}_{+,1}\right)&=& 1- \left(\E\left[\sqrt{\Xi\left(qe^{-\mu_ 2\bar q V},X\right)}\right]\right)^n
 \label{H*-gen}\\
 H\left(\widetilde f^{\otimes n}_{+,0},\widetilde f^{\otimes n}_{+,1}\right)&=& 1- \left(\E\left[\sqrt{\Xi(p q/(1-\bar p q),X)}\right]\right)^n,
 \label{H*-only}
 \end{eqnarray}
with $X$ an exponential rv with rate $\mu_1$, where the mapping $\Xi$ is defined in (\ref{def-Xi}).
\end{lemma}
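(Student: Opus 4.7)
The plan is to combine the tensor-product identity (\ref{eq:H-alt}) with the explicit form of $f_{+,1}$ and $\widetilde f_{+,1}$ given by Lemma \ref{lem:II}, and then recognize the resulting likelihood ratios as $\Xi$ through the exponential-convolution identity already used in the IEBP analysis.

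For (\ref{H*-gen}), applying (\ref{eq:H-alt}) to $u_0=f_{+,0}$, $u_1=f_{+,1}$ and rewriting the affinity integral as an expectation under $f_{+,0}$ gives
\begin{equation*}
H\left(f^{\otimes n}_{+,0},f^{\otimes n}_{+,1}\right)=1-\left(\int_{[0,\infty)^2} f_{+,0}(x,v)\sqrt{\frac{f_{+,1}(x,v)}{f_{+,0}(x,v)}}\,dx\,dv\right)^n=1-\left(\E\!\left[\sqrt{f_{+,1}(X,V)/f_{+,0}(X,V)}\right]\right)^n,
\end{equation*}
since $f_{+,0}(x,v)=\lambda e^{-\lambda v}g_1(x)$ is precisely the joint density of $X$ (with pdf $g_1$, here exponential of rate $\mu_1$) and the independent exponential $V$. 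From (\ref{lem:f+1xv}) of Lemma \ref{lem:II} the ratio is
\begin{equation*}
\frac{f_{+,1}(x,v)}{f_{+,0}(x,v)}=1+q e^{-\mu_2\bar q v}\left(\frac{(g_1*g_2)(x)}{g_1(x)}-1\right).
\end{equation*}
The next step, and the only point requiring verification, is the identification
\begin{equation*}
1+\theta\left(\frac{(g_1*g_2)(x)}{g_1(x)}-1\right)=\Xi(\theta,x)
\end{equation*}
when both $g_1,g_2$ are exponential. This is a direct computation: for $r\neq 1$ one has $(g_1*g_2)(x)=\frac{\mu_1\mu_2}{\mu_2-\mu_1}(e^{-\mu_1 x}-e^{-\mu_2 x})$, so $(g_1*g_2)(x)/g_1(x)-1=e^{(r-1)\mu x}/(r-1)-\beta$, matching the second line of (\ref{def-Xi}); for $r=1$, $(g_1*g_2)(x)=\mu^2 x e^{-\mu x}$ yields $\mu x-1$. (Equivalently, this is the content of (\ref{value-Z-exp}) specialized to the IEBP ratio.) Plugging this identification into the expectation above, with $\theta=qe^{-\mu_2\bar q v}$, gives (\ref{H*-gen}).

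The proof of (\ref{H*-only}) is identical in structure, starting from (\ref{eq:H-alt}) with $u_i=\widetilde f_{+,i}$:
\begin{equation*}
H\left(\widetilde f^{\otimes n}_{+,0},\widetilde f^{\otimes n}_{+,1}\right)=1-\left(\E\!\left[\sqrt{\widetilde f_{+,1}(X)/g_1(X)}\right]\right)^n,
\end{equation*}
and then invoking (\ref{lem:f+1x}) to write $\widetilde f_{+,1}(x)/g_1(x)=1+\tfrac{pq}{1-\bar p q}\bigl((g_1*g_2)(x)/g_1(x)-1\bigr)=\Xi(pq/(1-\bar pq),x)$ by the same identification. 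The main (and essentially only) obstacle is therefore the exponential calculation matching the convolution ratio with $\Xi$; everything else is a mechanical application of the tensor-product Hellinger formula and Lemma \ref{lem:II}.
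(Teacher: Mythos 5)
Your proof is correct and follows exactly the route the paper intends: apply the tensor-product Hellinger identity (\ref{eq:H-alt}), substitute the explicit densities from Lemma \ref{lem:II}, and recognize the resulting likelihood ratio as $\Xi$ by direct computation of the exponential convolution. The paper omits the proof of this lemma entirely (only offering "Hint: introduce (\ref{value-Z-exp}) and (\ref{value-tildeZ-exp}) in (\ref{eq:Hellinger})" for the analogous IEBP identities), and the argument you reconstructed is the one implicitly intended.
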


For the sake of comparison, recall that under the IEBP policy the  Hellinger distances  corresponding to (\ref{H*-gen}) and (\ref{H*-only}) are given by 
(Hint: introduce (\ref{value-Z-exp}) and (\ref{value-tildeZ-exp}) in (\ref{eq:Hellinger}), respectively)
  \begin{eqnarray}
 H\left(f^{\otimes n}_{0},f^{\otimes n}_{1}\right)&=& 1- \left(\E\left[\sqrt{\Xi\left(qe^{-\mu_ 2 V},X\right)}\right]\right)^n
 \label{sec5:H-gen}\\
 H\left(\widetilde f^{\otimes n}_{0},\widetilde f^{\otimes n}_{1}\right)&=& 1- \left(\E\left[\sqrt{\Xi(p q,X)}\right]\right)^n,
 \label{sec5:H-only}
 \end{eqnarray}
 respectively,  when  $g_i(x)=\mu_i e^{-\mu_i x}$ for $i=0,1$.
 
 We then see that, for $q$ small, $H\left(f^{\otimes n}_{+,0},f^{\otimes n}_{+,1}\right)\sim H\left(f^{\otimes n}_{0},f^{\otimes n}_{1}\right)$ and
 $H\left(\widetilde f^{\otimes n}_{+,0},\widetilde f^{\otimes n}_{+,1}\right) \sim H\left(\widetilde f^{\otimes n}_{0},\widetilde f^{\otimes n}_{1}\right)$, thereby explaining why Proposition \ref{prop:phase-transition-exponential} holds under
 the II policy, as announced earlier (a rigorous proof mimicks the (very lengthy) proof of Proposition \ref{prop:phase-transition-exponential}).\\
 
 In conclusion, as $n\to\infty$, policies IEBP and II behave the same as far as covert/non-covert results are concerned when Alice job service times are exponentially distributed.  This means that Alice should rather use the II policy since the expected number of jobs that she inserts over a {\em finite} number $n$ of W-BPs, given by $nq/(1-\bar p q)$, is larger  under the II policy that it is under the IEBP policy (given by $nq$).


\section{Insert-at-Idle-and-at-Arrivals Policy }
\label{sec:IIA}
Throughout this section we assume that the service times of Willie and Alice are exponentially distributed with rate $\mu_1$ and $\mu_2$, respectively.

We have observed at the beginning of \prettyref{subsec:IEBP} that Alice should preferably inserts jobs at idle times;  this was the motivation for introducing and 
investigating the IEBP policy in \prettyref{sec:IEBP} and its variant, the II policy investigated in   \prettyref{sec:II}.

But can Alice submit more jobs covertly  if she also inserts jobs at other times than at idle times, typically, just after an arrival /departure of a Willie job? 
This is the question we try to answer in this section.
Note that, because of the FIFO assumption, Alice cannot benefit from inserting a job at a time $t+$ if time $t$ is neither an arrival time nor a departure time of a Willie job. 

In this section, we assume that Alice inserts jobs  at idle times {\em and} at arrival times (see Remark \ref{rem:II-D}). More precisely, 
\begin{itemize}
\item
each time the server idles  Alice inserts one job with probability $q$ and does not insert a job  with probability $\bar q$;
\item
after the arrival of each Willie job, Alice inserts a  batch of $s\geq 0$ jobs  with probability $qQ(s)$ and with probability $1-q$ she does not insert any job.
\end{itemize}
These policies are called  {\em Insert-at-Idle-and-at-Arrivals} (II-A) policies. Let ${\mathcal A}$  be the set of all such policies.  A policy in ${\mathcal A}$ is fully characterized
by the pair $(q,Q)$, with $Q$ a pdf with support in $\{0,1,\ldots\}$.
Notice that the II-A policy reduces to the II policy when $Q_B(0)=1$ (no job inserted at arrival times).

For the time being we do not make any assumption on $Q$ (later on we will assume that it has a finite  support). We assume that successive job batch sizes inserted by Alice just after arrival epochs are mutually independent rvs. 

Let 
\[
\mathcal{G}_Q(z)=\sum_{s\geq 0}z^sQ(s)
\]
 be the generating function of $Q$ and denote by $B$ the expected batch size. 
For the time being, the pair $(q,Q)$ is fixed, that is, we focus on a particular policy in ${\mathcal A}$.
Under this policy,  all of Willie jobs are susceptible to interference from Alice.  This is in contrast with the IEBP policy, where only the first of Willie  jobs in a W-BP can be affected by Alice. 
 
Unlike for the IEBP policy  (cf. Proposition \ref{prop:phase-transition-exponential}),  we have not been able to find a sufficient statistics composed of iid rvs.  As a result, we will
only focus on obtaining an upper bound for $T(n)$. To obtain such a result, recall that Willie does not need to work with a sufficient statistics as it is enough for Willie to use a 
detector that prevents Alice from being covert (this argument was used to prove the non-covert part of Proposition \ref{prop:phase-transition-exponential}).

The non-covert result  is stated in  Proposition \ref{prop:non-covert-pi}. We will see that it  gives a loose upper bound since, in particular, it does not reduce to the non-covert
result  obtained under the II policy (see the remark after the proof of Proposition \ref{prop:geometric-exponential}). This is due to the fact that Willie does not use the full information he has about Alice jobs or, equivalently, he does not use a sufficient statistics.
 This said, we conjecture that the results in Proposition \ref{prop:phase-transition-exponential} should hold for all policies in ${\mathcal A}$ and also for a much broader class of policies
 (e.g. stationary policies) provided service times are exponentially distributed. \\

Recall the definition of a {\em Willie Busy Period} (W-BP) and \emph{Willie Idle Period} (W-IP) introduced at the beginning of \prettyref{sec:IEBP}. We call a {\em cycle} the period consisting of a W-IP followed by a W-BP.  
Denote by $N_A$ and $N_W$ the expected number of Alice  jobs and Willie  jobs served during a cycle, respectively.
 \begin{lemma}
\label{lem:NA-NW}
 Under the II-A policy the queue is stable iff $\rho_1+q\rho_2 B<1$. In this case,
  \begin{eqnarray}
 \lefteqn{\E[N_W]=\Biggl[1 +
q\frac{\rho_2  p}{1-q\bar p}\left(\frac{1}{1-q\bar p} - \frac{(1+p)Q(1)} {\bar p}\right)}\label{lem:NW}\\ 
&& +    q^2 \frac{\rho_2}{1-q\bar p}\left(p B-\frac{p}{1-qp}(1-\mathcal{G}_Q(\bar p))\right)\Biggr]\times \frac{1}{1- \rho_1 - q\rho_2B}\nonumber\\
&=&\frac{1}{1-\rho_1} +\frac{\rho_2}{1-\rho_1}\left(p -(2- p) Q(1)-\frac{B}{1-\rho_1}\right) q + o(q),
 \end{eqnarray}
and
\begin{eqnarray}
\E[N_A]&=& qB \E[N_W] + q\,\frac{\bar p \bar q +p}{(1-q\bar p)^2} \left(\bar q \overline{Q}(0)+\mathcal{G}_Q(\bar p) \right)\label{lem:NA}\\
&=&q\left(\frac{B}{1-\rho_1}+\overline{Q}(0)+\mathcal{G}_Q(\bar p)\right)+ o(q).\nonumber
\end{eqnarray}
 \end{lemma}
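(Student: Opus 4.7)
The plan is to exploit the Markov property coming from the exponential service times and reduce the analysis to an M/G/1 busy-period calculation with an exceptional first customer. Throughout, I would use the fact that, by memorylessness, the state at any epoch is encoded by the number of Alice jobs present (their residual services are iid $\mathrm{Exp}(\mu_2)$). The key state variables are $A_{\text{end}}$, the number of Alice jobs in the system when a W-BP ends, and $A_0$, the number present when the next W-BP starts (i.e.\ when Willie arrives after the W-IP). Because the W-IP has length $V\sim\mathrm{Exp}(\lambda)$ and Alice services are memoryless, the transition $A_{\text{end}}\mapsto A_0$ is governed entirely by racing exponential clocks with the probabilities $p=\lambda/(\lambda+\mu_2)$ and $\bar p=\mu_2/(\lambda+\mu_2)$.

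First I would set up and solve the renewal recursion for the W-IP. Starting from $A_{\text{end}}$ leftover Alice jobs, each one either completes before Willie's arrival (prob.\ $\bar p$ at each head-of-line race) or survives (prob.\ $p$); once the queue is truly empty, Alice inserts with probability $q$ and we iterate. This produces geometric-type sums of the form $\sum_k(q\bar p)^k$ and $\mathcal{G}_Q(\bar p)=\sum_s \bar p^{\,s}Q(s)$ (the latter being the probability that none of a surviving batch of size $s$ has completed by Willie's arrival), leading to closed-form expressions for $\mathbb{P}(A_0=k)$ and in particular for $\mathbb{E}[A_0]$. This step depends on the stationary distribution of $A_{\text{end}}$, which must be obtained as the fixed point of the cycle map; in the end all the factors $(1-q\bar p)^{-1}$, $(1-qp)^{-1}$, and $\mathcal{G}_Q(\bar p)$ visible in \eqref{lem:NW}--\eqref{lem:NA} should come out of this bookkeeping.

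Next I would cast the W-BP as an M/G/1 busy period. A generic Willie arrival is an ``effective customer'' whose service is its own $\mathrm{Exp}(\mu_1)$ plus the Alice batch queued immediately behind it under FIFO (size drawn from $Q$ w.p.\ $q$, zero otherwise); this gives mean effective service $1/\mu_1+qB/\mu_2$ and load $\rho=\rho_1+q\rho_2 B$, hence the stability condition $\rho_1+q\rho_2 B<1$ and the denominator $1-\rho_1-q\rho_2 B$ in \eqref{lem:NW}. The first Willie of the W-BP is an \emph{exceptional} first customer whose service also incorporates the residual Alice work $A_0/\mu_2$ found on arrival. Applying the classical formula $\mathbb{E}[N_W]=1+\lambda\,\mathbb{E}[S_0]/(1-\rho)$ and simplifying produces the bracketed numerator of \eqref{lem:NW}; the term $q\rho_2p/(1-q\bar p)$ arises from the idle-time insertion contribution to $\mathbb{E}[A_0]$, while the $q^2\rho_2(pB-\ldots)$ correction captures batch-job residuals that carry over from the previous W-BP.

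Finally, for \eqref{lem:NA} I would decompose $N_A$ into (i)~Alice jobs inserted at Willie arrivals during the W-BP, and (ii)~Alice jobs inserted at server-idle epochs during the W-IP. Part (i) equals $qB\cdot\mathbb{E}[N_W]$ by Wald, since the batch sizes at distinct arrivals are iid and independent of $N_W$. Part (ii) is computed directly from the same W-IP renewal as in Step~1, summing the expected number of insertions over successful-insert, failed-insert, and stop outcomes; this gives the second term $q(\bar p\bar q+p)/(1-q\bar p)^2\cdot(\bar q\overline{Q}(0)+\mathcal{G}_Q(\bar p))$. The small-$q$ asymptotics follow by Taylor expanding and keeping only the $O(q)$ terms. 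The main obstacle I anticipate is the coupling between successive cycles through $A_{\text{end}}$: batches inserted late in a W-BP have nonzero probability of surviving into the next W-IP, so one really needs the stationary joint law at the cycle boundary rather than a one-shot calculation, and this is where most of the combinatorial factors in the statement arise.
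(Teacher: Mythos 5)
Your approach is essentially the same as the paper's: cast the W-BP as an M/M/1 (equivalently M/G/1) busy period with an exceptional first customer whose service absorbs the residual Alice work found at the end of the W-IP, read off the numerator and denominator of \eqref{lem:NW} from the classical expected-busy-period formula, and split $\E[N_A]$ into the Wald term $qB\,\E[N_W]$ plus a direct idle-period renewal sum. The effective ordinary-customer mean $1/\mu_1 + qB/\mu_2$ and the load $\rho_1 + q\rho_2 B$ are the right ingredients for the stability condition and the denominator.

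However, the ``main obstacle'' you anticipate is not actually there, and worrying about it could send you off on an unnecessary detour. Because the discipline is FIFO and Alice batch $j$ is inserted immediately behind Willie job $j$, when the last Willie customer $W_{N_W}$ of a W-BP departs, every Alice job queued ahead of it (all of $A_0$ and batches $1,\dots,N_W-1$) has already been served; the only Alice jobs that can remain are precisely those in the batch inserted after $W_{N_W}$'s arrival, and that batch has not yet begun service. Hence the number $\kappa$ of Alice jobs at the start of the W-IP has the \emph{explicit, history-free} law $\P(\kappa=0)=\bar q+qQ(0)$, $\P(\kappa=k)=qQ(k)$ for $k\ge 1$ — a fresh iid draw each cycle. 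There is no ``stationary joint law at the cycle boundary'' to solve for: the cycle map factors through this one-shot distribution, and the factors $(1-q\bar p)^{-1}$, $(1-qp)^{-1}$, $\mathcal G_Q(\bar p)$ come purely from the idle-period race of exponential clocks starting from $\kappa$, exactly as in your Step 1. So your plan is correct, but you should replace the fixed-point argument with this observation; without it the derivation of $\P(\mathcal E(s))$ and of the second term in \eqref{lem:NA} would be needlessly clouded.
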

 The proof is given in Appendix \ref{app:C}. From now on we assume that $q\in [0,q_0)$ with $q_0:=(1-\rho_1)/(\rho_2 B)$
 so that the stability condition $\rho_1+q\rho_2 B<1$ holds (recall that $\rho_1<1$ -- see  \prettyref{sec:IEBP}). In particular, W-BPs have finite expected lengths when $\rho_1+q\rho_2 B<1$.\\
 
To apply the results of \prettyref{sec:IEBP}, Willie needs to come up with a detector built in such a way that the reconstructed service times form an iid sequence.
 To this end, he will use the following detector, hereafter refers to as $D_W$: from each of the first $n$ W-BP, he picks a job uniformly at random and reconstructs its service time.
 Under the enforced assumptions (Poisson arrivals and exponential service times), this detector produces an iid sequence of reconstructed service times. 
 
 We consider a generic cycle and denote by $J\in\{1,\ldots,N_W\}$ the identity of the Willie  job picked at random in the W-BP.  Let $\pi_J:=\P(J=1)$ be the probability that the first job is picked.
 Let $Y$ be the reconstructed service time of the randomly picked job $J$ (recall that in \prettyref{sec:IEBP} $Y$ denotes the reconstructed service time of the first job, corresponding to $\pi_J = 1$. We use the same notation here for the sake
of simplicity, as no confusion should arise).
 
 We have
 \begin{equation}
 \label{y-pi}
 Y=\sigma_1+ \sum_{r=1}^{S_J} \tau_r,
 \end{equation}
 where $\sigma_1$ is a generic service time for a Willie  job, $\tau_1, \ldots,\tau_r$ are the service times of $r$ different Alice' s jobs, and $S_J$ is the number of Alice
 jobs that interferes with $J$. Define
 \begin{equation}
w_i(x):=\frac{d}{dx}\P_{H_i}(Y<x), \quad i=0,1,
  \label{def:Yqx}
 \end{equation}
 the pdf of the Willie job reconstructed service time in a W-BP under $H_i$. Clearly, $w_0(x)=\frac{d}{dx}\P_{H_0}(Y<x)=g_1(x)$.  
 The Hellinger distance between the pdfs $w_0$ and $w_1$ is (cf. \eqref{eq:Hellinger-def})
 \begin{equation}
 \label{Hellinger-II-A}
 H(w_0,w_1)=1- \E\left[\sqrt{W(q,X)}\right],
 \end{equation}
 where $X$ is an exponential rv with parameter $\mu_1$, and
 \begin{equation}
 \label{def:W}
 W(q,x)=\frac{w_1(x)}{g_1(x)}.
  \end{equation}
 The mapping $W$ corresponds to the mapping $Z$ for the IEBP policy (see (\ref{eq:Z-def})).
  The lemma below determines $W(q,x)$. The proof is provided in Appendix \ref{app:D}.

 \begin{lemma} 
 \label{lem-Yqx}
 For $q\in [0,q_0)$, 
 \begin{equation} 
W(q,x)=\Delta_1(q)+\Delta_2 (q)\Phi_1(x)+q\Phi_2(x), \quad x\geq 0,
\label{lem:eq:Y}
\end{equation}
where
\begin{align}
\Delta_1(q)&:= \frac{\bar q}{1-q\bar p}\left(\bar q + q \mathcal{G}_Q(\bar p)\right)\pi_J +(1-q\overline{Q}(0))\bar \pi_J\geq 0
\label{def:Delta1}\\
\Delta_2(q)&:=q\,\frac{\bar p(1-q\overline{Q}(0)) +\mathcal{G}_Q(\bar p)-Q(0)}{\bar p(1-q\bar p)}\geq 0
\label{def:Delta2}\\
\Phi_1(x) & :=p  \pi_J \frac{(g_1*h_1)(x)}{g_1(x)}
\label{def:Phi1}\\
\Phi_2(x) &:= p \pi_J\sum_{s\geq 2} \frac{(g_1*h_s) (x)}{g_1(x)}\sum_{l\geq s} Q(l)\bar p^{l-s} \nonumber\\
&\quad \quad +  \bar \pi_J \sum_{s\geq 1}\frac{(g_1*h_s)(x)}{g_1(x)} Q(s),
\label{def:Phi2}
\end{align}
where $h_s(x)=\mu_2^{s} x^{s-1}e^{-\mu_2 x}/(s-1)!$ is the pdf of a $s$-stage Erlang rv with mean $k/\mu_2$.
\end{lemma}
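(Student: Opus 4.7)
The plan is to compute $\P(S_J=s)$ for every $s\ge 0$ and read off $W(q,x)$ from the mixture
\[
w_1(x)=\P(S_J=0)\,g_1(x)+\sum_{s\ge 1}\P(S_J=s)\,(g_1*h_s)(x),
\]
which follows from $Y=\sigma_1+\sum_{r=1}^{S_J}\tau_r$ and the mutual independence of $\sigma_1\sim g_1$, of the $\tau_r$'s (iid $\sim g_2$) and of $S_J$. Dividing by $g_1(x)$ reduces the claim to identifying the coefficients of $1,(g_1*h_1)(x)/g_1(x),(g_1*h_2)(x)/g_1(x),\dots$ in terms of $\Delta_1(q),\Delta_2(q)\Phi_1(x),q\Phi_2(x)$. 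It is natural to condition on $\{J=1\}$ (probability $\pi_J$) and $\{J\ge 2\}$ (probability $\bar\pi_J$).

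The case $J\ge 2$ is immediate: by FIFO the only Alice jobs that can delay Willie job $J$ are those in the batch Alice inserted just after the preceding Willie arrival $A_{J-1}$; this batch is queued between Willie jobs $J-1$ and $J$ and served entirely before $J$ begins. Hence $\P(S_J=0\mid J\ge 2)=1-q\overline Q(0)$ and $\P(S_J=s\mid J\ge 2)=qQ(s)$ for $s\ge 1$.

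The case $J=1$ is more delicate. The batch Alice inserts at $A_1$ itself is queued \emph{behind} Willie~1 and so cannot delay it; the relevant interferers are the remnants of the batch, of size $L$, that Alice inserted at the last Willie arrival of the \emph{previous} W-BP. Under the FIFO/W-BP geometry every earlier batch of the previous W-BP is fully served before that W-BP ends, so this last batch is the only Alice content remaining when the previous W-BP ends; it begins service there and spills into the W-IP. By the memorylessness of the Willie inter-arrival time (rate $\lambda$) and of the Alice services (rate $\mu_2$), given $L=l$ the number $K$ of these batch jobs completing before $A_1$ satisfies $\P(K=k\mid L=l)=\bar p^{k}p$ for $0\le k<l$ and $\P(K=l\mid L=l)=\bar p^l$; when $K<l$, exactly $l-K\ge 1$ batch jobs interfere with Willie~1. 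If instead $K=l$ (probability $\bar p^l$), the server idles strictly before $A_1$, and first-step analysis of the geometric idle-insertion chain gives $\P(\text{Alice in service at }A_1\mid K=l)=qp/(1-q\bar p)$, with complement $\bar q/(1-q\bar p)$. Marginalising over $L$ (where $\P(L=0)=\bar q+qQ(0)$ and $\P(L=l)=qQ(l)$ for $l\ge 1$) and using $\sum_{l\ge 0}\P(L=l)\bar p^l=\bar q+q\mathcal G_Q(\bar p)$ yields
\begin{align*}
\P(S_1=0\mid J=1)&=\frac{\bar q}{1-q\bar p}\bigl(\bar q+q\mathcal G_Q(\bar p)\bigr),\\
\P(S_1=s\mid J=1)&=qp\sum_{l\ge s}Q(l)\bar p^{l-s}\quad(s\ge 2),\\
\P(S_1=1\mid J=1)&=qp\sum_{l\ge 1}Q(l)\bar p^{l-1}+\frac{qp(\bar q+q\mathcal G_Q(\bar p))}{1-q\bar p}.
\end{align*}

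Substituting these, together with the expressions for $J\ge 2$, into the mixture for $W(q,x)$ and grouping terms by $s$ gives directly the $s=0$ coefficient $\Delta_1(q)$, the $s\ge 2$ together with the $\bar\pi_J$-at-$s=1$ contributions $q\Phi_2(x)$, and the $\pi_J$-at-$s=1$ contribution $\Delta_2(q)\Phi_1(x)$. The only nontrivial algebra is the identity
\[
q\sum_{l\ge 1}Q(l)\bar p^{l-1}+\frac{q(\bar q+q\mathcal G_Q(\bar p))}{1-q\bar p}=\frac{q\bigl[\bar p(1-q\overline Q(0))+\mathcal G_Q(\bar p)-Q(0)\bigr]}{\bar p(1-q\bar p)},
\]
which follows by clearing the common denominator $\bar p(1-q\bar p)$ and using $\bar q+qQ(0)=1-q\overline Q(0)$ together with $\mathcal G_Q(\bar p)-Q(0)=\sum_{l\ge 1}Q(l)\bar p^l$. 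The main obstacle is the bookkeeping for $J=1$---recognising that the relevant batch is the previous W-BP's \emph{last} batch (not the one at $A_1$, which is queued behind Willie~1) and enumerating the disjoint events ``batch still in progress at $A_1$'' versus ``batch exhausted, followed by the geometric idle-insertion dynamics'' in a way that keeps the memoryless calculations transparent.
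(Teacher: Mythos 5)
Your proof is correct and takes essentially the same approach as the paper: both condition on $J=1$ vs.\ $J\ge 2$, recognise that only the batch following the last Willie arrival of the preceding W-BP (plus subsequent idle insertions) can delay Willie job 1, and compute the interference‐count distribution by conditioning on that batch size and exploiting memorylessness of the Willie interarrival and Alice service times. The only superficial difference is that you obtain the geometric idle-insertion factor $qp/(1-q\bar p)$ by a first-step recursion whereas the paper sums the series over the number of idle insertions; the two are equivalent.
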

  \begin{lemma}
\label{lem-power-series}
Assume that  the support of $Q$ is finite. If $\mu_1<2\mu_2$ there exists a finite constant $c_0$ such that 
\begin{equation}
\label{power-series-sqrt-Y}
\E\left[\sqrt{W(q,X)}\right]=1+c_0 q^2 +o(q^2),
\end{equation}
where $X$ is an exponential rv with rate $\mu_1$.
\end{lemma}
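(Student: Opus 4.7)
The plan is to combine the algebraic identity $\sqrt{u}=1+(u-1)/2-(\sqrt{u}-1)^2/2$ with the normalization $\E[W(q,X)]=1$, which follows because $w_1$ is a probability density function so that $\int_0^\infty g_1(x)W(q,x)dx=\int_0^\infty w_1(x)dx=1$. Taking expectations in the identity with $u=W(q,X)$ yields
\begin{equation*}
\E[\sqrt{W(q,X)}]\;=\;1\;-\;\tfrac{1}{2}\,\E\!\left[(\sqrt{W(q,X)}-1)^2\right],
\end{equation*}
so the claim is equivalent to showing $\E[(\sqrt{W(q,X)}-1)^2]=\alpha\, q^2+o(q^2)$ for some finite $\alpha\geq 0$, after which $c_0=-\alpha/2$.

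Next I would write $W(q,x)-1=qR(q,x)$. Using \eqref{lem:eq:Y} together with $\Delta_1(0)=\pi_J+\bar\pi_J=1$ and $\Delta_2(0)=0$,
\begin{equation*}
R(q,x)=\frac{\Delta_1(q)-1}{q}+\frac{\Delta_2(q)}{q}\,\Phi_1(x)+\Phi_2(x).
\end{equation*}
From the explicit forms \eqref{def:Delta1}--\eqref{def:Delta2}, both $(\Delta_1(q)-1)/q$ and $\Delta_2(q)/q$ extend continuously to $q=0$, taking there the values $a:=\Delta_1'(0)$ and $b:=\Delta_2'(0)$, and remain bounded on some neighborhood $[0,q_1]$ of $0$. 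Using the identity $(\sqrt{W}-1)^2=(W-1)^2/(\sqrt{W}+1)^2$, one obtains pointwise
\begin{equation*}
\frac{(\sqrt{W(q,x)}-1)^2}{q^2}\;=\;\frac{R(q,x)^2}{(\sqrt{W(q,x)}+1)^2}\;\xrightarrow[q\downarrow 0]{}\;\frac{R(0,x)^2}{4},\qquad R(0,x)=a+b\,\Phi_1(x)+\Phi_2(x).
\end{equation*}

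The remaining task, and the main obstacle, is justifying the exchange of limit and expectation. Since $(\sqrt{W}+1)^{-2}\leq 1$ and the two scalar coefficients in $R(q,\cdot)$ are uniformly bounded in $q\in[0,q_1]$, there is a finite constant $K$ such that
\begin{equation*}
\frac{(\sqrt{W(q,x)}-1)^2}{q^2}\;\leq\;R(q,x)^2\;\leq\;K\bigl(1+\Phi_1(x)^2+\Phi_2(x)^2\bigr),\qquad q\in[0,q_1].
\end{equation*}
Thus it suffices to establish $\Phi_1,\Phi_2\in L^2(g_1\,dx)$. By \eqref{def:Phi1}--\eqref{def:Phi2} this reduces to verifying
\begin{equation*}
\int_0^\infty\frac{(g_1\ast h_s)(x)^2}{g_1(x)}\,dx\;<\;\infty\qquad\text{for every }s=1,\ldots,s_{\max},
\end{equation*}
where $s_{\max}$ is the (finite) upper end of $\mathrm{supp}(Q)$. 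A direct convolution with $g_1(x)=\mu_1 e^{-\mu_1 x}$ and $h_s(x)=\mu_2^s x^{s-1}e^{-\mu_2 x}/(s-1)!$ yields $(g_1\ast h_s)(x)/g_1(x)=P_s(x)\,e^{(\mu_1-\mu_2)x}+Q_s(x)$ for polynomials $P_s,Q_s$ of degree at most $s-1$ when $\mu_1\neq\mu_2$ (a similar, simpler formula holds when $\mu_1=\mu_2$). Squaring and integrating against $g_1$ then produces finitely many integrals of the form $\int_0^\infty x^{2k} e^{(\mu_1-2\mu_2)x}dx$ and $\int_0^\infty x^{2k} e^{-\mu_1 x}dx$, all finite exactly because $\mu_1<2\mu_2$.

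With this dominating function in place, dominated convergence gives $\E[(\sqrt{W(q,X)}-1)^2]=\tfrac{1}{4}\E[R(0,X)^2]\,q^2+o(q^2)$, and the lemma follows with $c_0=-\tfrac{1}{8}\E[R(0,X)^2]$, a finite (and nonpositive) constant. The only non-routine step is the $L^2(g_1)$ bound on $\Phi_1$ and $\Phi_2$, which is a natural extension of Lemma \ref{lem:C-different-cdf}(1) from the single convolution $g_1\ast g_2$ to the $s$-fold convolutions $g_1\ast h_s$; it crucially uses both hypotheses, $\mu_1<2\mu_2$ to control the exponential tail, and finite support of $Q$ to keep the sum in $\Phi_2$ finite.
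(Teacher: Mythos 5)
Your proof is correct, and it takes a genuinely different and arguably more streamlined route than the paper's. The paper defines $F(q)=\E[\sqrt{W(q,X)}]$, verifies a long list of hypotheses to ensure $F$ is twice differentiable near $q=0$ (via Leibniz and dominated convergence arguments on $f(q,x)=\sqrt{W(q,x)}$ and its first two $q$-derivatives), computes $F(0)=1$ from normalization, proves $F'(0)=0$ in a separate lemma (Lemma \ref{lem:app-F-2}) by an explicit cancellation, and then invokes Taylor's theorem so that $c_0=F''(0)/2$. You instead use the elementary identity $\sqrt{u}=1+\tfrac{u-1}{2}-\tfrac{(\sqrt{u}-1)^2}{2}$, which, combined with $\E[W(q,X)]=1$, eliminates the linear term before any asymptotics are done: the whole problem reduces to the second-order quantity $\E[(\sqrt{W(q,X)}-1)^2]$. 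This buys you two things. First, you never need to compute or cancel a first derivative; the vanishing of the linear coefficient is forced structurally by $w_1$ being a density, which is precisely what the paper laboriously reverifies in Lemma \ref{lem:app-F-2} by expanding $\Delta_i$ and $\Phi_i$. Second, you only have to dominate one ratio, $(\sqrt{W}-1)^2/q^2=R(q,x)^2/(\sqrt{W}+1)^2$, using the trivial bound $(\sqrt{W}+1)^{-2}\le 1$, rather than establish separate $L^1$ bounds for $f$, $\partial_q f$, and $\partial^2_q f$ as the paper does. The technical core — finiteness of $\int (g_1*h_s)^2/g_1$ for each $s$ in the support of $Q$, using $\mu_1<2\mu_2$ and the finite support — is shared by both proofs (it is the paper's Lemma \ref{lem:app-F3}), and your sketch of it via the explicit form $(g_1*h_s)(x)/g_1(x)=P_s(x)e^{(\mu_1-\mu_2)x}+\text{const}$ is right (your ``$Q_s(x)$'' is in fact a constant, but that only strengthens the bound). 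Your resulting $c_0=-\tfrac{1}{8}\E[R(0,X)^2]$, with $R(0,x)=\Delta_1'(0)+\Delta_2'(0)\Phi_1(x)+\Phi_2(x)$, agrees with the paper's $\tfrac12 F''(0)$ since $\Delta_1'(0)=\alpha$ and $\Delta_2'(0)=\beta$ as defined in \eqref{def:alpha}--\eqref{def:beta}.
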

The proof is provided in Appendix \ref{app:E}. Below is the main result of this section.
\begin{proposition}
\label{prop:non-covert-pi}
Assume that the support of $Q$ is finite. 
For all $\mu_1$ and $\mu_2$, Alice is not covert if $T(n)=\omega(\sqrt{n})$.
\end{proposition}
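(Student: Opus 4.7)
The plan is for Willie to use the detector $D_W$ introduced just before Lemma \ref{lem-Yqx}, which extracts an iid sample $Y_1,\ldots,Y_n$ (one randomly chosen job per W-BP) with marginal pdf $w_i$ under $H_i$. By Theorem \ref{thm:optimal-test} applied to this (generally sub-optimal) statistic, the minimum $P_E$ achievable with $D_W$ equals $1 - T_V(w_0^{\otimes n},w_1^{\otimes n})$, and combining the Hellinger lower bound in \eqref{eq:lb-ub} with \eqref{eq:H-alt} and \eqref{Hellinger-II-A} reduces non-covertness to proving $(\E[\sqrt{W(q,X)}])^n \to 0$, where $X \sim g_1$. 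From Lemma \ref{lem:NA-NW}, $\E[N_A] = qK + o(q)$ with $K := B/(1-\rho_1) + \overline{Q}(0) + \mathcal{G}_Q(\bar p) > 0$, so $T(n) = n\E[N_A] \sim Knq$, and the hypothesis $T(n) = \omega(\sqrt n)$ is equivalent to $nq^2 \to \infty$. It therefore suffices to show $n(1 - \E[\sqrt{W(q,X)}]) \to \infty$ whenever $nq^2 \to \infty$.

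When $\mu_1 < 2\mu_2$, this follows immediately from Lemma \ref{lem-power-series}, which gives $\E[\sqrt{W(q,X)}] = 1 + c_0 q^2 + o(q^2)$. The Cauchy--Schwarz step used in the proof of Proposition \ref{prop:converse-IEBP} shows $\E[\sqrt{W(q,X)}] \leq 1$ with equality iff $q = 0$, so the leading coefficient in a smooth expansion satisfies $c_0 < 0$. Then $n(1 - \E[\sqrt{W(q,X)}]) \sim |c_0|\,nq^2 \to \infty$, closing this case.

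When $\mu_1 \geq 2\mu_2$, Lemma \ref{lem-power-series} does not apply since the associated $c_0$ diverges, so I would replace the sharp quadratic expansion by a Le Cam--style lower bound on the Hellinger distance. Taylor-expanding $\Delta_1(q)$ and $\Delta_2(q)$ at $q = 0$ in the formula of Lemma \ref{lem-Yqx} gives $W(q,x) = 1 + q A(x) + O(q^2)$ pointwise, with $\E[A(X)] = 0$ (since $\int w_1 = 1$ for every $q$) and $A \not\equiv 0$ (from direct inspection of the $q\Phi_2(x)$ term). Hence there exists $t > 0$ with $\psi := \int_t^\infty A(x) g_1(x)\,dx \neq 0$, and the bounded test function $\phi(x) = \mathbf{1}_{\{x > t\}}$ inserted into the Le Cam inequality $H(w_0,w_1) \geq (\E_0[\phi] - \E_1[\phi])^2 / (8\|\phi\|_\infty^2)$ yields $H(w_0,w_1) \geq \psi^2 q^2 / 8$ for all small $q$. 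Tensorizing via \eqref{eq:H-alt} gives $(\E[\sqrt{W(q,X)}])^n \leq (1 - \psi^2 q^2/8)^n \to 0$ when $nq^2 \to \infty$.

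The main obstacle is this second regime: one must justify the first-order Taylor expansion of $W(q,x)$ uniformly enough in $x$ to conclude $\E[A(X)] = 0$ by swapping limit and integration, and verify from the explicit formulas in Lemma \ref{lem-Yqx} that $A$ is not identically zero. This is precisely where the hypothesis of finite support for $Q$ enters: it makes the sums defining $\Phi_1, \Phi_2$ finite and tames the exponential-in-$x$ growth of $(g_1 * h_s)(x)/g_1(x)$ that would otherwise arise when $\mu_1 \geq \mu_2$ and preclude a uniform-in-$x$ control of the $O(q^2)$ remainder.
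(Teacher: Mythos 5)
Your proof matches the paper's for the sub-case $\mu_1<2\mu_2$, but takes a genuinely different route for $\mu_1\geq 2\mu_2$. The paper handles that regime by observing that $x\mapsto W(q,x)$ is nondecreasing (from the explicit form of $(g_1*h_s)/g_1$ in \eqref{g1*hs}), so that $X\leq_{st}\widetilde X_{\nu_1}$ with $\nu_1<2\mu_2$ gives $\E[\sqrt{W(q,X)}]\leq\E[\sqrt{W(q,\widetilde X_{\nu_1})}]$, and then invokes the sub-Gaussian-regime estimate for the lower-rate exponential. Your alternative replaces this stochastic-ordering step by a Le Cam two-point lower bound on the Hellinger distance via an indicator test function; this is arguably cleaner, because it only requires first-order (in $q$) control of $\int_t^\infty g_1 W(q,\cdot)$, which is absolutely convergent regardless of the sign of $\mu_1-2\mu_2$ — so the finite-support hypothesis on $Q$ is needed only to make the sums in \eqref{def:Phi1}--\eqref{def:Phi2} finite, not to tame a second moment. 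Your concern about justifying the Taylor expansion ``uniformly in $x$'' is in fact not an issue: after integrating against $g_1(x)\,dx$, the $q$-dependence of $\int_t^\infty g_1 W(q,\cdot)$ sits entirely in the rational scalar coefficients $\Delta_1(q),\Delta_2(q),q$, so the expansion is ordinary one-dimensional Taylor and $\psi=\int_t^\infty g_1 A$ with $A=\alpha+\beta\Phi_1+\Phi_2$. What is a genuine (though shared) gap is the claim that $A\not\equiv 0$, equivalently that $c_0=-\tfrac18\int g_1 A^2<0$: your Cauchy--Schwarz argument only yields $c_0\leq 0$, and the paper does not explicitly rule out $c_0=0$ either; both proofs implicitly rely on $\alpha+\beta\Phi_1(x)+\Phi_2(x)$ being non-constant, which is plausible but deserves a line of verification (e.g.\ $\Phi_1$ is strictly increasing and unbounded while $\alpha$ is constant, so $A$ cannot be identically zero). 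A minor accounting point: you set $T(n)=n\E[N_A]\sim Knq$, whereas the paper simply takes $T(n)=n/\phi(n)$ with $q=1/\phi(n)$; both reduce to the same condition $nq^2\to\infty$, so this is harmless.
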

\begin{proof} Without loss of generality assume that the support of $Q$ is contained in $\{0,1,\ldots,S\}$ with $S<\infty$.

Let $Y_1,\ldots,Y_n$ be the Willie reconstructed job service times over $n$ W-BPs. 
Under $H_1$ (resp. $H_0$), the joint pdf of
$Y_1,\ldots,Y_n$ is $w^{\otimes n}_1$ (resp. $w^{\otimes n}_0=g^{\otimes n}_1$) 
since these rvs are iid.  Hence, from \eqref{eq:H-alt}, 
\[
H\left(w_0^{\otimes n},w_1^{\otimes n}\right)=1-\left(\E\left[\sqrt{W(q,X)}\right]\right)^n,\quad \forall n\geq 1,
\]
which in turn gives, by using (\ref{eq:lb-ub}), 
\begin{equation}
\label{inq-W}
1-\left(\E\left[\sqrt{W(q,X)}\right]\right)^n \leq T_V\left(w_0^{\otimes n},w_1^{\otimes n}\right),\quad \forall n\geq 1.
\end{equation}
Let $T_n=\frac{n}{\phi(n)}=\omega(\sqrt{n})$ or, equivalently, $\lim_{n\to\infty} \frac{\phi(n)^2}{n}=0$. Upon replacing $\widetilde f_1(x)$ by $w_1(x)$, the same argument in the proof of  
\prettyref{prop:converse-IEBP} shows that Alice is not covert if $\liminf_n \phi(n)<\infty$. Therefore, we assume from now on that $\lim_n \phi(n)=\infty$.

Assume first that $\mu_1<2 \mu_2$.  By Lemma \ref{lem-power-series}, we conclude that 
\begin{equation}
\label{converse}
\lim_{n\to\infty}\left( \E\left[\sqrt{W(1/\phi(n),X))}\right]\right)^n =0
\end{equation}
 which proves, thanks to (\ref{inq-W}) and  (\ref{covert-criterion2}), that Alice is not covert if $T(n)=\omega(\sqrt{n})$ and $\mu_1<2 \mu_2$.

Let us show that (\ref{converse}) holds when $\mu_1\geq 2\mu_2$ which will complete the proof.
From (\ref{g1*hs}), we see that for each $s=1,\ldots,S$,  the mapping $x\to (g_1*h_s)(x)/g_1(x)$  is non-decreasing in $[0,\infty)$. On the other hand,
notice that both sums in (\ref{def:Phi2}) are finite under the assumption that $Q$ has a finite support, and  observe that each term $(g_1*h_s)(x)/g_1(x)$, $s=1,\ldots,S$, is multiplied
by a non-negative constant. Therefore,  the mappings $x\to \Phi_i(x)$, $i=1,2$, are non-decreasing in $[0,\infty)$, which in turn
shows that the mapping $x\to W(q,x)$ (given in (\ref{lem:eq:Y}))  is non-decreasing in $[0,\infty)$ for all $q\in [0,1]$, since $\Delta_i(q)\geq 0$ for $i=1,2$ (Hint: in (\ref{def:Delta2})
$\mathcal{G}_Q(\bar p)\geq Q(0)$ by definition of the generating function $\mathcal{G}_Q$).\\

Let $\widetilde X_\nu$ be an exponential rv with rate $\nu$. Take  $\nu_1$ such that $\mu_1 \geq  2\mu_2> \nu_1$. The stochastic inequality
$X=\widetilde X_{\mu_1}\leq_{st} \widetilde X_{\nu_1}$
together with increasingness (shown above) of the  mapping $x\to W(q,x)$ for $q\in [0,1]$ , yield
\begin{equation}
\label{inq:X}
 \E\left[\sqrt{W(q,X)}\right] \leq \E\left[\sqrt{W(q,\widetilde X_{\nu_1})}\right], \quad \forall q\in [0,1].
\end{equation}
Then,  (\ref{converse})  and (\ref{inq:X})  imply
\[
\lim_{n\to\infty}\left(\E\left[\sqrt{W(1/\phi(n), X}\,\right]\right)^n=0
\]
when $T(n)=\omega(\sqrt{n})$. This completes the proof.
\end{proof}
In general, the asymptotic upper bound in Proposition \ref{prop:non-covert-pi} is loose as Willie's detector lacks of information (cf. discussion at the beginning of this section).
This is the case  when $Q_B(0)=1$ (i.e. the II-A policy reduces to the II policy)  as the bound is larger than the bound for $\mu_1=2\mu_2$ ($\omega(\sqrt{n/\log n})$)
and for  $\mu_1>2\mu_2$ ($\omega(n^{\mu_2/\mu_1})$) under the II policy (see \prettyref{sec:II}). 

Last, we consider a variant of the II-A policies where Alice inserts a batch of jobs that is geometrically distributed with mean $1/a$ at times the server becomes idle and immediately after the arrival of Willie  job, both with probability $q$.  We have the following result:

\begin{proposition}
\label{prop:geometric-exponential}
Assume that $g_i(x)=\mu_i  e^{-\mu_i x}$ for $i=1,2$. When Willies uses detector $D_W$,
Alice is not covert if she inserts \begin{itemize}
\item[(a)] $\omega(\sqrt{n})$ jobs when $\mu_1<2a \mu_2$
\item[(b)] $\omega(\sqrt{n/\log n})$  jobs when $\mu_1=2a\mu_2$
\item[(c)] $\omega(n^{a\mu_2/\mu_1})$ jobs when $\mu_1>2a\mu_2$
\end{itemize}
on average over $n$ W-BPs.
\end{proposition}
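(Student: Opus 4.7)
The plan is to reduce the statement to Proposition \ref{prop:phase-transition-exponential} (with the extension to the II policy noted in Section \ref{sec:II}) via an exact distributional equivalence. First I would have Willie apply a simplified version of detector $D_W$, with $\pi_J = 1$, i.e., always reconstructing the service time of the \emph{first} Willie job in each W-BP. Under FIFO, any batch Alice inserts immediately after a Willie arrival enters the queue behind that arrival, and therefore cannot affect the reconstructed service time of any earlier Willie job. In particular, after-arrival batches never contribute to $Y_j$, so Willie's observation $\{Y_j\}_j$ depends only on Alice's idle-time insertions.

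Next I would invoke the classical identity that a geometric sum of iid exponentials is exponential: if $B$ is geometric with mean $1/a$ and $\tau_1, \tau_2, \ldots$ are iid $\mathrm{Exp}(\mu_2)$, then $\sum_{i=1}^{B}\tau_i \sim \mathrm{Exp}(a\mu_2)$. Combined with memorylessness, this shows that the residual work of an in-progress geometric batch at any stopping time (in particular, at the arrival epoch of the first Willie job of a W-BP) is itself $\mathrm{Exp}(a\mu_2)$. Viewed through the detector, a geometric batch is therefore indistinguishable from a single $\mathrm{Exp}(a\mu_2)$ Alice job. This yields an exact distributional equivalence between the II-A geometric-batch policy (Willie rate $\mu_1$, Alice rate $\mu_2$, mean batch size $1/a$, insertion probability $q$) and the II policy with Alice service rate $a\mu_2$ and the same $q$: the joint law of $\{(Y_j, V_j)\}_j$ is identical in both models.

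With this reduction in hand I would apply the converse part of Proposition \ref{prop:phase-transition-exponential} with $\mu_2$ replaced by $a\mu_2$, using the fact (Section \ref{sec:II}) that the same phase transition governs the II policy. This yields the three non-covertness bounds $\omega(\sqrt{n})$, $\omega(\sqrt{n/\log n})$, $\omega(n^{a\mu_2/\mu_1})$ on the expected number of insertions $T'(n)$ in the reduced model according to whether $\mu_1 < 2a\mu_2$, $\mu_1 = 2a\mu_2$, or $\mu_1 > 2a\mu_2$. Finally, the expected number of inserted jobs $T(n)$ in the original II-A geometric policy differs from $T'(n)$ by only a multiplicative constant (essentially $1/a$ times the mean number of trigger points per W-BP, summing idle-time and after-arrival contributions), which is bounded uniformly in $n$; so the $\omega$ bounds on $T'(n)$ translate directly to the claimed bounds on $T(n)$.

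The main obstacle will be making the distributional equivalence precise, because within a single W-IP the server may idle repeatedly: once at the start of the W-IP, and again whenever a previously inserted batch completes before the next Willie arrival. Each such idle instant is an independent insertion trigger. One must check that the sequence of triggers and the resulting residual workload at the first Willie arrival of the next W-BP has the same joint distribution as in the II policy with single $\mathrm{Exp}(a\mu_2)$ jobs; this follows by induction on the number of triggers, using the geometric-sum identity at each step together with memorylessness of the exponential idle-period length.
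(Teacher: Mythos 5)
There is a genuine gap in your first step, and it propagates through the claimed ``exact distributional equivalence.''

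You argue that, since any batch Alice inserts immediately after a Willie arrival enters the queue \emph{behind} that arrival, ``after-arrival batches never contribute to $Y_j$.'' The premise is correct (the batch cannot affect \emph{earlier} Willie jobs), but the conclusion is a non sequitur: $Y_j$ is the reconstructed service time of the \emph{first} Willie job of W-BP $j$, which arrives \emph{after} the last Willie arrival of W-BP $j-1$. The batch inserted immediately after that last arrival is enqueued behind it, starts service at the departure that ends W-BP $j-1$, and may very well still be in service when the first Willie job of W-BP $j$ arrives --- in which case its residual is precisely what inflates $Y_j$. (This is exactly the role played by the events $\mathcal{G}_l$ in the paper's Lemma \ref{lem-Yqx}: the last Willie arrival of the W-BP triggers a batch that survives into the W-IP.) So even with $\pi_J=1$ the observation depends on after-arrival insertions. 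As a consequence, your exact equivalence with the II policy at rate $a\mu_2$ and the \emph{same} $q$ fails at the base case of the induction you sketch: at the instant the server would otherwise first idle in a W-IP, the II-A geometric policy has a batch present with probability $q + \bar q\, q = q(1+\bar q)$ (arrival trigger, or else idle trigger), not $q$; continuation probabilities thereafter are $q$. The law of $(Y_j,V_j)$ therefore does not coincide with that of the II policy at the same $q$.

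This error happens to go in the favorable direction --- the II-A geometric policy produces \emph{more} interference, so Willie's Hellinger/TV distance is \emph{larger}, and non-covertness can only become easier to establish. That is precisely what the paper's (terse) proof exploits: it notes that the pdf under $H_1$ has the same functional form as \eqref{eq:Z-def}--\eqref{eq:rho} with $\mu_2$ replaced by $a\mu_2$ and with an effective interference probability $p > \lambda/(\lambda+a\mu_2)$, and then the monotone structure of the converse arguments of Proposition \ref{prop:phase-transition-exponential} carries through. To repair your proof, you should replace ``exact distributional equivalence'' by an inequality: the reconstructed-service-time statistic under II-A geometric stochastically dominates that under II at rate $a\mu_2$ (the likelihood ratio has the same $\Xi$-form but with a larger effective insertion parameter), and invoke the monotonicity already used in the converse, rather than an equality that does not hold. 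Your final multiplicative-constant step relating $T(n)$ to $T'(n)$ is fine once the orders of growth are established.
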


\begin{proof}  Note that each  batch of Alice  jobs incurs a total amount of service time that is exponentially distributed with rate $a\mu_2$. This coupled with Willie  detector produces a pdf for the hypothesis $H_1$ of the form (\ref{eq:Z-def}) for some $p > \lambda/(\lambda+a\mu)$ in (\ref{eq:rho}). The arguments leading to the converse in Proposition \ref{prop:phase-transition-exponential} apply to this case to yield the desired result. 
\end{proof}

\begin{remark}[Geometric batch size]
Note that geometric batching provably reduces covert throughput in the range $2a\mu_2 \le \mu_1 <2\mu_2$ under a variant of the II-A policy using batches with finite support.  This appears to be due to the exponential tail.  We conjecture that batches of size greater than one can only reduce covert throughput.
A similar result holds for a variant of the IEBP policy where Alice introduces a batch of jobs with probability $q$ each time the server becomes idle where the batch is geometrically distributed with mean $1/a$ leading to a considerably smaller covert throughput than is possible when Alice introduces only one job at a time.  This is evidence that batching again may be harmful and that Alice should introduce only one job at a time. 
\end{remark}

\begin{remark}[Insert-at-Idle-and-at-Departure]\label{rem:II-D}
The analysis of the policy, called II-D, where Alice may insert a job each time the server idles and may also insert a batch of jobs after each Willie job departure (provided the system is not empty)  is more involved than that of the II-A policy. This is so because the reconstructed service time of a Willie job in a W-BP depends on what happened in this busy period  prior to the arrival  of this job. To illustrate this, assume first that the $j$th Willie job ($j>1$) in a W-BP arrives during the service time of the 1st Willie job in this W-BP. Then, job $j$ will not be affected by any Alice's insertions in this W-BP. But if job $j$ arrives during the service time
of the $(j-1)$st Willie job then it may be affected by $0,1$ or up to $j-2$ Alice batches, depending on how many batches Alice insert at departures of Wille jobs $1,2,\ldots, j-2$.
This is in contrast with the II-A policy, where job $j>1$ in a W-BP will be affected by at most one Alice's batch (the batch 
inserted after the arrival of customer $j-1$, if any).
\end{remark}


\section{Concluding remarks}
\label{sec:conclusion}
In this paper we have studied covert cycle stealing in an M/G/1 queue. We have obtained  a phase transition result on the expected number of
jobs that Alice can covertly insert in $n$ busy periods when both Alice and Willie's  jobs have exponential service times and established partial covert results for arbitrary service times.   
Several research directions present themselves.  We conjecture that \prettyref{prop:phase-transition-exponential} holds for a more general class of distributions; it would be interesting to verify this. It would be useful to weaken the assumption that Willie's detectors rely on observations being independent and identically distributed random variables; this would lead to consideration of a larger class of policies on Alice's behalf.  Another direction would be to allow Alice to control her job sizes and study what benefit this would provide her. Yet another is to consider other hypothesis testing techniques including generalized likelihood ratio test (GLRT), sequential detection, etc.  GLRT could lead to relaxing the need for Willie to know Alice's parameters whereas sequential detection could lead to more timely detection of Alice.

\bibliographystyle{plain}
\bibliography{arxiv3-bib}

\appendix

\section{Appendix: Proof of Lemma \ref{lemma:likelihood}}
\label{app:sufficient-statistics}

Let $p_{i,n}(w_{1:n})$ be the pdf of $W_{1:n}=(W_1,\ldots,W_n)$ at $w_{1:n}=(w_1,\ldots,w_n)$ under $H_i$ for $i=0,1$.  
Also let $\widetilde p_{i,j}(w_j)$ be the pdf\,\footnote{Clearly $\widetilde p_{i,j}(w_j)=\int_{\R^{n-1}} p_{i,n}(w_{1:n})dw_1\cdots dw_{j-1} dw_{j+1} \cdots dw_n$.}
 of $W_j$ at $w_j$ under $H_i$ for $i=0,1$. 
Note that $p_{i,n}$ (resp. $\widetilde p_{i,j}$) is a  generalized pdf since $W_{1:n}$ (resp. $W_j$) contains integer and continuous components.

By the general multiplicative formula,
\begin{equation}
\label{piWn}
\widetilde p_{i,n}(w_{1:n})=\widetilde p_{i,1}(w_1)\times \prod_{j=2}^n \widetilde p_{i,j}(w_j\,|\, W_{1:j-1}=w_{1:j-1}).
\end{equation}
Let $w_j=(m_j, a_{(m_{j-1}+1):m_j}, s_{(m_{j-1}+1):m_j})$, $y_j=s_{m_{j-1}+1}$, and $v_j=a_{m_{j-1}+1}-d_{m_{j-1}}$.
We have
\[
\widetilde p_{i,j}(w_j\,|\, W_{1:j-1}=w_{1:j-1})=\widetilde p_{i,j}(w_j\,|\, A_{m_{j-1}}=a_{m_{j-1}}, D_{m_{j-1}}=d_{m_{j-1}},A_{m_{j-1}+1}>d_{m_{j-1}}),
\]
since the probability distribution of the number of customers served in a busy period in an M/G/1 queue is entirely determined 
once we know the duration of the first service time in this busy period \cite[Chapter 5.9]{Kleinrock75}. Hence, 
\begin{eqnarray}
\lefteqn{\widetilde p_{i,j}(w_j\,|\, W_{1:j-1}=w_{1:j-1}) ={\bf 1}\left(a_{m_{j-1}+1}>d_{m_{j-1}}\right)f_i(y_j, v_j)}\nonumber\\
&&\times p\left(M_j=m_j, A_{(m_{j-1}+2):m_j}=a_{(m_{j-1}+2):m_j}, S_{(m_{j-1}+2):m_j}=s_{(m_{j-1}+2):m_j}\,|\, Y_j=y_j, V_j=v_j\right),
\label{piWj}
\end{eqnarray}
where the latter density is independent of $H_0$ and $H_1$. The pdf $\widetilde p_{i,1}(w_1)$ is given by the r.h.s. of (\ref{piWj}) by letting $j=1$. Putting (\ref{piWn}) and
 (\ref{piWj})  together yields the factorization result
 \begin{equation}
 \label{piWn-value}
 p_{i,n}(w_{1:n})=\prod_{j=1}^n f_j(y_j,v_j)\times \hbox{other factors independent of $H_0$ and $H_1$},
 \end{equation}
 which proves that $(Y_{1:n},V_{1:n})$ is a sufficient statistic \cite[Chapter 1.9]{LR05}.


\section{Appendix}
\label{app:Zqxv}
Recall that $Z(q,x,v)=\frac{f_1(x,v)}{f_0(x,v)}$, with $f_i(x,v)$ the pdf of $(Y,V)$ at $(x,v)$ under $H_i$ for $i=0,1$.

\begin{lemma}
\label{lem:Z-rho}
\begin{equation} \label{eq:Z-rho}
Z(q,x,v)=1+q\rho(x,v),  
\end{equation}
where 
\begin{equation}
\label{eq:rho2}
\rho(x,v):=\frac{1}{g_1(x)}\int_0^x g_1(u)g_2(v+x-u)du -\overline G_2(v).
\end{equation}
 \end{lemma}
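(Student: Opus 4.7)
The plan is to compute $f_1(y,v)$ explicitly by conditioning on Alice's action at the end of the previous W-BP and on the resulting service-time realization, then divide by $f_0(y,v)=g_1(y)\lambda e^{-\lambda v}$ to read off $Z$ and identify $\rho$. The starting observation is that, because Willie arrivals form a Poisson process independent of everything else, the length $V$ of the idle period is exponential with rate $\lambda$ under both $H_0$ and $H_1$; hence the $\lambda e^{-\lambda v}$ factor cancels in the ratio and the whole computation reduces to finding the conditional density of $Y$ given $V=v$ under $H_1$.

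The key step is the case analysis on what happens to the first Willie job of the W-BP. With probability $\bar q$ Alice does not insert, so $Y=\sigma_1\sim g_1$. With probability $q$ she inserts a job with service time $\sigma_2\sim g_2$; then two subcases arise depending on how $\sigma_2$ compares to $V=v$. If $\sigma_2\le v$, Alice's job finishes before the first Willie job arrives and again $Y=\sigma_1\sim g_1$. If $\sigma_2>v$, the first Willie job finds residual Alice work $\sigma_2-v$ in service, waits, and then receives its own service, so $Y=(\sigma_2-v)+\sigma_1$. Combining,
\begin{equation*}
g_{Y\mid V=v}^{(1)}(y)=\bigl[\bar q+qG_2(v)\bigr]g_1(y)+q\int_v^{v+y}g_2(s)\,g_1\bigl(y-(s-v)\bigr)\,ds,
\end{equation*}
and the change of variables $u=y-(s-v)$ turns the second term into $q\int_0^{y}g_1(u)g_2(v+y-u)\,du$. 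Using $\bar q+qG_2(v)=1-q\overline{G}_2(v)$ and dividing by $g_1(y)$ yields
\begin{equation*}
Z(q,y,v)=\frac{f_1(y,v)}{f_0(y,v)}=1+q\left(\frac{1}{g_1(y)}\int_0^{y}g_1(u)g_2(v+y-u)\,du-\overline{G}_2(v)\right),
\end{equation*}
which is exactly $1+q\rho(y,v)$ with $\rho$ as in \eqref{eq:rho2}.

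The only potentially delicate step is the convolution subcase: one must be careful that the event $\{\sigma_2>v\}$ together with the distribution of $\sigma_1$ really produces the kernel $\int_0^y g_1(u)g_2(v+y-u)\,du$ on the set $\{y>0\}$, and that the limits of integration after the substitution $u=y-s+v$ come out as $u\in[0,y]$ (the lower endpoint $u=0$ from $s=v+y$, the upper endpoint $u=y$ from $s=v$). Once this is in place, the linearity of $Z$ in $q$ is immediate and no further calculation is needed.
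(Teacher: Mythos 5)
Your proposal is correct and takes essentially the same approach as the paper's proof: condition on whether Alice inserts ($Y=\sigma_1$ with probability $\bar q$), and given insertion, on whether $\sigma_2\le v$ or $\sigma_2>v$, then assemble the conditional density of $Y$ given $V=v$ and divide by $f_0$. Your change-of-variables check matches the paper's kernel $\int_0^x g_1(u)g_2(x+v-u)\,du$ exactly, so there is no substantive difference.
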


\begin{proof}
Consider a generic W-BP. Let $\sigma_1$ (resp. $\sigma_2$) denote a generic service time of a Willie (resp. Alice) job. Let $A$ be the event that Alice inserts a job at the end of the W-BP. 
Then
\[
Y = \sigma_1 + \ind{A}\cdot (\sigma_2 - V)^+,
\]
where $(z)^+ = \max\{z,0\}$. We first compute the conditional density $f_1(x\mid v)$ of $Y$ given $V$. Given $A^C$, $Y=\sigma_1$, so 
\[
f_1(x\mid v, A^c) = g_1(x).
\]
Given $A$ and $V=v$, we have $Y = \sigma_1 + (\sigma_2-v)^+$, so that
\[
f_1(x\mid v,A) = g_1(x) G_2(v) + \int_0^x g_1(u) g_2(x+v-u) du.
\]
Recall the probability of $A$ under $H_1$ is $q$,  so
\begin{align}
f_1(x\mid v) & = q f_1(x\mid v,A) +  \bar q f_1(x\mid v, A^c)\nonumber\\
&=  g_1(x) +  q \left[\int_0^x g_1(u) g_2(x+v-u) du - g_1(x) \bar G_2(v)\right]\nonumber\\
&= g_1(x) [1 + q\rho(x,v)], \label{lem:f1xv}
\end{align}
by using the definition of $\rho(x,v)$ in (\ref{eq:rho2}). Therefore,
\[
Z(q,x,v)=\frac{f_1(x\mid v)\lambda e^{-\lambda v}}{f_0(x,v)}=1+q\rho(x,v),
\]
by using (\ref{value-f0}), which concludes the proof.
\end{proof}

\section{Appendix: Proof of Lemma \ref{lem:C-different-cdf}}
\label{app:A}
 
 Let $g_2(x)=\sum_{l=1}^{K_2} p_{2,l} g_{2,l}(x)$ with $g_{2,l}(x):=\mu_{2,l} e^{-\mu_{2,l} x}$, $p_{2,l}\geq 0$ for all $l$ and $\sum_{l=1}^{K_2} p_{2,l}=1$, namely, Alice job service times follow an hyper-exponential distribution with mean $1/\mu_2=\sum_{l=1}^{K_2} 1/\mu_{2,l}$.  Denote by $G^*_1(s)=\int_0^\infty e^{-s x} g_1(x)dx$ the Laplace transform of Willie job service times.\\
 
 By using \eqref{eq:rho}, we find
 \[
 \rho(x,v)=\frac{1}{g_1(x)}\sum_{l=1}^{K_2} p_{2,l} e^{-\mu_{2,l} v} (g_1*g_{2,l})(x) - \sum_{l=1}^{K_2} p_{2,l} e^{-\mu_{2,l}v},
 \]
 so that 
 \[ 
\E\left[ \rho(X,V)^2\right] = \alpha_1 -2\alpha_2 + \alpha_3
 \]
with 
\begin{eqnarray*}
\alpha_1&:=& \int_{[0,\infty)^2} \frac{\lambda e^{-\lambda v}}{g_1(x)} \left[\sum_{l=1}^{K_2} p_{2,l}e^{-\mu_{2,l}v} (g_1*g_{2,l})(x)\right]^2 dv dx\\
&=& \sum_{l=1\atop m=1}^{K_2}\frac{\lambda p_{2,l}p_{2,m}  }{\mu_{2,l}+\mu_{2,m}}\int_0^\infty  \frac{(g_1*g_{2,l})(x)\times (g_1*g_{2,m})(x)}{g_1(x)} dx;\label{def:alpha1}\\
\alpha_2&:=&\sum_{l=1\atop m=1}^{K_2} p_{2,l} p_{2,m}\int_{[0,\infty)^2} \lambda e^{-(\lambda+\mu_{2,l}) v}(g_1*g_{2,l})(x) e^{\mu_{2,m}x}dv dx\\
&=&\sum_{l=1\atop m=1}^{K_2} p_{2,l}p_{2,m}  \frac{\lambda \mu_{2,l}G^*_1(\mu_{2,m})}{(\lambda +\mu_{2,l})(\mu_{2,l}+\mu_{2,m})}\leq 1;\\
\alpha_3&:=&\sum_{l=1\atop m=1}^{K_2}p_{2,l} p_{2,m}  \int_{[0,\infty)^2} \lambda e^{-\lambda v} g_1(x) e^{-(\mu_{2,l}+\mu_{2,m})x} dv dx\\
&=&\sum_{l=1\atop m=1}^{K_2}p_{2,l} p_{2,m}  G^*_1(\mu_{2,l}+\mu_{2,m})\leq 1.
\end{eqnarray*}
We conclude from the above that $\E[ \rho(X,V)^2]<\infty$ if and only if 
\begin{equation}
\label{def:beta-l-m}
\beta_{l,m}:= \int_0^\infty  \frac{(g_1*g_{2,l})(x)\times (g_1*g_{2,m})(x)}{g_1(x)} dx<\infty
\end{equation}
for all $l,m=1,\ldots, K_2$.\\

{\bf Case 1:} $g_1(x)=\sum_{i=1}^{K_1} p_{1,i} \mu_{1,i} e^{-\mu_{1,i} x}$, $p_{1,i}\geq 0$ for all $i$ and $\sum_{i=1}^{K_1} p_{1,i}=1$, namely, Willie job service times follow an
 hyper-exponential distribution with mean $1/\mu_1=\sum_{i=1}^{K_1} 1/\mu_{1,i}$.

We have
\[
(g_1*g_{2,l})(x)= \sum_{i=1}^{K_1} p_{1,i} \mu_{1,i}\mu_{2,l} \Biggl[x e^{-\mu_{2,l}x} \,{\bf 1}(\mu_{1,i}=\mu_{2,l})+\frac{e^{-\mu_{2,l}x}-e^{-\mu_{1,i}x}}{\mu_{1,i}-\mu_{2,l}}\,{\bf 1}(\mu_{1,i}\not=\mu_{2,l})\Biggr]
\]
for $l=1,\ldots, K_2$, so that
\begin{eqnarray*}
\lefteqn{(g_1*g_{2,l})(x)\times (g_1*g_{2,m})(x)=\sum_{i=1\atop j=1}^{K_1} p_{1,i}p_{1,j}\mu_{1,i}\mu_{1,j} \Biggl[ P_{i,l}(x) P_{j,m}(x)e^{-(\mu_{2,l}+\mu_{2,m})x}} \\
&& - a_{i,l}b_{j,m}x e^{-(\mu_{1,i}+\mu_{2,m})x}-a_{j,m}b_{i,l} x e^{-(\mu_{1,j}+\mu_{2,l})x}+ b_{i,l}b_{j,m} e^{-(\mu_{1,i}+\mu_{1,j})x}\Biggr],
\end{eqnarray*}
with
\begin{eqnarray*}
a_{i,l}&:=&\mu_{2,l} {\bf 1}(\mu_{1,i}=\mu_{2,l})\\
b_{i,l}&:=&\frac{\mu_{2,l}}{\mu_{1,i}-\mu_{2,l}}{\bf 1}(\mu_{1,i}\not=\mu_{2,l}) \\
P_{i,l}(x)&:=&a_{i,l}x+b_{i,l},
\end{eqnarray*}
for $i=1,\ldots,K_1$, $l=1,\ldots,K_2$. 
Define $\mu^*_1=\max_{1\leq i\leq K_1} \mu_{1,i}$. Then,
\begin{eqnarray}
\beta_{l,m}&=&\sum_{i=1\atop j=1}^{K_1} p_{1,i}p_{1,j}\mu_{1,i}\mu_{1,j}
\int_0^\infty \frac{P_{i,l}(x)P_{j,m}(x) e^{-(\mu_{2,l}+\mu_{2,m}-\mu^*_1)x}}{\sum_{i=1}^{K_1} p_{1,i} \mu_{1,i} e^{(\mu^*_1-\mu_{1,i})x}} dx\nonumber\\
&-&\sum_{j=1}^{K_1}p_{1,j}\mu_{1,j}b_{j,m} \int_0^\infty \frac{\sum_{i=1}^{K_1}p_{1,i}\mu_{1,i}a_{i,l}e^{-\mu_{1,i}x}}{\sum_{i=1}^{K_1} p_{1,i} \mu_{1,i}  e^{-\mu_{1,i}x}}
x e^{-\mu_{2,m}x} dx\nonumber\\
&-&\sum_{j=1}^{K_1}p_{1,j}\mu_{1,j} b_{j,l} \int_0^\infty \frac{\sum_{i=1}^{K_1}p_{1,i}\mu_{1,i}a_{i,m}e^{-\mu_{1,i}x}}{\sum_{i=1}^{K_1} p_{1,i}\mu_{1,i}  e^{-\mu_{1,i} x}} 
x e^{-\mu_{2,l}x} dx\nonumber\\
&+& \sum_{j=1}^{K_1}p_{1,j}\mu_{1,j} b_{j,m}\int_0^\infty \frac{\sum_{i=1}^{K_1} p_{1,i}\mu_{1,i}b_{i,l}e^{-\mu_{1,i}x}}{\sum_{i=1}^{K_1} p_{1,i} \mu_{1,i} e^{-\mu_{1,i} x}} e^{-\mu_{1,j}x}dx.
\label{C0-cond-hyp}
\end{eqnarray}
The second, third, and fourth integrals in  the r.h.s. of (\ref{C0-cond-hyp}) are finite since $\lim_{x\to\infty} \frac{\sum_{i=1}^{K_1}p_{1,i}\mu_{1,i}a_{i,l}e^{-\mu_{1,i}x}}{\sum_{i=1}^{K_1} p_{1,i}\mu_{1,i} e^{-\mu_{1,i} x}}$ and $\lim_{x\to\infty} \frac{\sum_{i=1}^{K_1}p_{1,i}\mu_{1,i}b_{i,l}e^{-\mu_{1,i}x}}{\sum_{i=1}^{K_1} p_{1,i}\mu_{1,i} e^{-\mu_{1,i} x}}$
are finite for any $l=1,\ldots, K_2$. The first integral is finite if and only if 
\begin{equation}
\label{finiteness-C0-hyp}
\mu^*_1=\max_{1\leq i\leq K_1}\mu_{1,i}\leq 2 \min_{1\leq l\leq K_2} \mu_{2,l}.
\end{equation}
This shows that $C_0<\infty$ when (\ref{finiteness-C0-hyp}) holds.

In particular, when $K_1=K_2=1$ (exponential service times for both Alice and Willie jobs) then $C_0<\infty$ if and only if $\mu_1<2\mu_2$.  For further reference, note that
\begin{equation}
\label{rho-expo}
\rho(x,v)=\left\{\begin{array}{ll}
e^{-\mu_1 v} (\mu_1 x-1 ) & \mbox{if $\mu_1=\mu_2$}\\
e^{-\mu_2 v}\left(\frac{\mu_2 e^{-(\mu_2-\mu_1)x}-\mu_1}{\mu_1-\mu_2}\right) & \mbox{if $\mu_1\not=\mu_2$}
                        \end{array}
                \right.
\end{equation}
when $g_i(x)=\mu_i e^{-\mu_i x}$, $i=1,2$.

{\bf Case 2:} $g_1(x)=\nu_1^{K_1} x^{K_1-1} e^{-\nu_1 x}/(K_1-1)!$ with $1/\mu_1=K_1/\nu_1$ (Willie job service times follow a $K_1$-stage Erlang pdf with mean $1/\mu_1$).

We have, with $\eta_l:=\frac{\nu_1^{K_1} \mu_{2,l} }{(K_1-1)!}$,
\[
(g_1*g_{2,l})(x)=\left\{\begin{array}{ll}
\eta_l \, \frac{x^{K_1}}{K_1} e^{-\mu_{2,l}x}&\mbox{if $\nu_1=\mu_{2,l}$}\\
\eta_l   e^{-\mu_{2,l}x}\int_0^x u^{K_1-1} e^{-(\nu_1-\mu_{2,l})u} du &\\
&\mbox{if $\nu_1\not=\mu_{2,l}$,}
                                  \end{array}
                            \right.
\]
for $l=1,\ldots,K_2$.  

Define $\xi_l(k)=\int_0^x u^{k-1}e^{-(\nu_1-\mu_{2,l}) u}du$ for $k\geq 1$. Integrating by part gives
\[
\xi_l(k)= \frac{-x^{k-1}e^{-(\nu_1-\mu_{2,l}) x}}{\nu_1-\mu_{2,l}}+ \frac{k-1}{\nu_1-\mu_{2,l}} \xi_l(k-1),\,\,k\geq 2,
\]
which yields (use that $\xi_l(1)=(1-e^{-(\nu_1-\mu_{2,l}) x})/(\nu_1-\mu_{2,l})$)
\[
\xi_l(k)=-e^{-(\nu_1-\mu_{2,l}) x} \sum_{i=1}^{k} \frac{(k-1)!}{(k-i)!} \frac{x^{k-i}}{(\nu_1-\mu_{2,l})^i}+\frac{(k-1)!}{(\nu_1-\mu_{2,l})^k}.
\]
Therefore,
\begin{equation}
\label{g1*g2-erl}
(g_1*g_{2,l})(x)=Q_{1,l}(x)e^{-\mu_{2,l}x} -Q_{2,l}(x)e^{-\nu_1 x},
\end{equation}
with
\begin{eqnarray}
Q_{1,l}(x)&:=&\eta_l\frac{x^{K_1}}{K_1}\,{\bf 1}(\nu_1=\mu_{2,l}) +\eta_l\frac{(K_1-1)!}{(\nu_1-\mu_{2,l})^{K_1}} \,{\bf 1}(\nu_1\not=\mu_{2,l})\nonumber\\
&&\label{def:Q1l}\\
Q_{2,l}(x)&:=& \eta_l \sum_{i=1}^{K_1} \frac{(K_1-1)!}{(K_1-i)!} \frac{x^{K_1-i}}{(\nu_1-\mu_{2,l})^i}\,{\bf 1}(\nu_1\not=\mu_{2,l}).\nonumber\\
&&\label{def:Q2l}
\end{eqnarray}
When $\nu_1=\mu_{2,l}$ or $\nu_1=\mu_{2,m}$ it is easily seen from (\ref{g1*g2-erl})-(\ref{def:Q2l}) that $\beta(l,m)<\infty$ if and only if $\nu_1<\mu_{2,l}+\mu_{2,m}$.

Let us investigate the (less trivial) remaining case when $\nu_1\not= \mu_{2,l}$ and $\nu_1\not= \mu_{2,m}$. In this case we have, from (\ref{g1*g2-erl})-(\ref{def:Q2l}),
\begin{eqnarray*}
\lefteqn{\frac{(g_1*g_{2,l})(x) (g_1*g_{2,m})(x)}{g_1(x)} }\\
&= &\frac{\eta_l\eta_m}{\nu_1^{K_1} x^{K_1} e^{-\nu_1 x}}
\Biggl[ \frac{(K_1-1)!}{(\nu_1-\mu_{2,l})^{K_1}} e^{-\mu_{2,l}x} - \sum_{i=1}^{K_1} \frac{(K_1-1)!}{(K_1-i)!} \frac{x^{K_1-i}}{(\nu_1-\mu_{2,l})^i} e^{-\nu_1 x}\Biggr]\\
&&\times\Biggl[ \frac{(K_1-1)!}{(\nu_1-\mu_{2,lm})^{K_1}} e^{-\mu_{2,m}x} - \sum_{i=1}^{K_1} \frac{(K_1-1)!}{(K_1-i)!} \frac{x^{K_1-i}}{(\nu_1-\mu_{2,m})^i} e^{-\nu_1 x}\Biggr]\\
&=&\frac{\eta_l\eta_m ((K-1)!)^2}{\nu_1^{K_1}   (\nu_1-\mu_{2,l})^{K_1} (\nu_1-\mu_{2,m})^{K_1}     }\\
&&\times\Biggl[ e^{(\nu_1-\mu_{2,l})x} - \sum_{j=0}^{K_1-1} \frac{(x(\nu_1-\mu_{2,l}))^j }{j!}\Biggr] \times \frac{1}{x^{K_1}}
\Biggl[e^{-\mu_{2,m}x} - e^{-\nu_1 x} \sum_{j=0}^{K_1-1} \frac{ (x(\nu_1-\mu_{2,m})^j}{j!}\Biggr],
\end{eqnarray*}
which shows that $(g_1*g_{2,l})(x) (g_1*g_{2,m})(x)/g_1(x)$ is well-defined when $x\to 0$ and is $[0,\infty)$-integrable if and only if $\nu_1<\mu_{2,l}+\mu_{2,m}$. 

In summary, $C_0<\infty$ if and only if $\nu_1<2\min_{1\leq l\leq K_2} \mu_{2,l}$ or, equivalently, if and only if 
$\mu_1<\frac{2}{K_1}\min_{1\leq l\leq K_2} \mu_{2,l}$.


\section{Appendix}
\label{app:A0}
\begin{lemma}
\label{lem:technic}
Let $f,g :\N \to [0,\infty)$. 
If $\limsup_n\frac{g(n)}{f(n)}<\infty$ with $\lim_n g(n)=\infty$ then $\lim_n f(n)=\infty$.
\end{lemma}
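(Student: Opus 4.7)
The plan is to prove this directly from the definitions, essentially as a one-line argument packaged carefully.

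Let $M := \limsup_n g(n)/f(n)$; by hypothesis $M < \infty$. First I would note that since $\lim_n g(n) = \infty$, we must have $f(n) > 0$ for all sufficiently large $n$ (otherwise the ratio $g(n)/f(n)$ would be $+\infty$ for arbitrarily large $n$, contradicting $M < \infty$). By the definition of $\limsup$, there exists $n_0$ such that for every $n \ge n_0$,
\[
\frac{g(n)}{f(n)} \le M + 1,
\]
and hence $f(n) \ge g(n)/(M+1)$.

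Since $g(n) \to \infty$ as $n \to \infty$, we have $g(n)/(M+1) \to \infty$, which forces $f(n) \to \infty$. This completes the proof.

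There is no real obstacle here; the only mild subtlety is handling the possibility that $f(n) = 0$ for some $n$ (in which case $g(n)/f(n)$ is not finite), but this is excluded for large $n$ by the boundedness assumption together with $g(n) \to \infty$.
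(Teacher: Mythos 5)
Your proof is correct and follows essentially the same route as the paper's: once $\limsup_n g(n)/f(n)$ is bounded by some finite constant, you obtain $f(n)\ge g(n)/C$ for all sufficiently large $n$, and $g(n)\to\infty$ finishes the argument. You present it a bit more cleanly (the paper makes an unnecessary case split on whether the limsup is zero), and you explicitly handle the edge case $f(n)=0$, which the paper leaves implicit, but these are cosmetic differences rather than a different method.
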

\begin{proof}
If $\lim_n \frac{g(n)}{f(n)}=0$ then clearly $\lim_n f(n)=\infty$. Assume now that  there exist $0< L<\infty$ and $n_0$ such that for all $n>n_0$
\[
 \sup_{k\geq n} \frac{g(k)}{f(k)}<L.
 \]
Since $\sup_{k\geq n} \frac{g(k)}{f(k)} \geq \frac{g(n)}{f(n)}$, $f(n)>L^{-1}g(n)$ for $n>n_0$, 
which proves the lemma since $\lim_n g(n)=\infty$. 
\end{proof}
%


\section{Appendix: Proof of Lemma \ref{lem:expansion-Y}} 
\label{app:proof-key-lemma}

\begin{proof}
Assume that $0<r<1$ (i.e. $\beta<0$).  Recalling that  $X_r$ is an exponential rv with rate $\mu r$,  definition (\ref{def:xir}) yields  
\begin{eqnarray*}
\lefteqn{\P\left(\sqrt{ \Xi(\theta,X_r)}>z\right)}\\
&=&
\left\{\begin{array}{ll}
0 &\mbox{if $z>\sqrt{1-\theta\beta}$}\\
\left(\frac{1-\theta\beta -z^2}{\theta(1-\beta)} \right)^{-\beta} &\mbox{if $\sqrt{1-\theta}\leq z\leq \sqrt{1-\theta\beta}$}\\
1 &\mbox{if $z< \sqrt{1-\theta}$,}
        \end{array}
         \right.
 \end{eqnarray*}
which gives
\begin{eqnarray}
\lefteqn{\E\left[\sqrt{\Xi(\theta,X_r)}\right] =\int_0^\infty \P\left(\sqrt{\Xi(\theta,X_r)}>z\right) dz}\nonumber \\
 &=& \sqrt{1-\theta} +(\theta(1-\beta))^\beta 
  \int_{\sqrt{1-\theta}}^{\sqrt{1-\theta\beta}}(-y^2+1-\theta\beta)^{-\beta} dy\nonumber\\
&=&  \sqrt{1-\theta} +\left(\frac{\theta(1-\beta)}{1-\theta\beta}\right)^\beta (1-\theta\beta)^{1/2}
 \int_{\sqrt{\frac{1-\theta}{1-\theta\beta}}}^1 (1-y^2)^{-\beta} dy.\nonumber\\
&& \label{sqrtY-r-less-1}
 \end{eqnarray}
Recall that $\xi_r(\theta)=(1-\beta)\theta/(1-\theta\beta)$, so that $\theta=\xi_r(\theta)/(1-\beta+\beta \xi_r(\theta))$. Substitution into (\ref{sqrtY-r-less-1}) yields
(with $\xi_r\equiv \xi_r(\theta)$ with a slight abuse of notation)
\begin{eqnarray*}
\E\left[\sqrt{ \Xi(\theta,X_r)}\right]&=&\sqrt{1-\frac{\xi_r}{1-\beta + \beta\xi_r}} +\xi_r^\beta \left(1+ \frac{\beta\xi_r}{1-\beta}\right)^{-1/2} \int_{\sqrt{1-\xi_r}}^1 (1-y^2)^{-\beta} dy\\
&=&\sqrt{1-\frac{\xi_r}{1-\beta + \beta\xi_r}} +\frac{1}{2}\xi_r^\beta \left(1+ \frac{\beta\xi_r}{1-\beta}\right)^{-1/2}
\int_0^{\xi_r} \frac{x^{-\beta}}{\sqrt{1-x}}dx.
 \end{eqnarray*}
When $x$ is small, $x^{-\beta}/\sqrt{1-x}\sim x^{-\beta}+x^{1-\beta}/2$, so that  
$\int_0^{\xi_r} \frac{x^{-\beta}}{\sqrt{1-x}}dx \sim \xi_r^{1-\beta}/(1-\beta)+\xi_r^{2-\beta}/(2(2-\beta))$ as $\xi_r$ is small. With this, we obtain
\begin{eqnarray}
\E\left[\sqrt{ \Xi(\theta,X_r)}\right] &=&1 +\frac{1-4\beta+2\beta^2}{4(2-\beta)(1-\beta)^2} \,\xi_r^2+o(\xi_r^2)\nonumber\\
&=&1+  \frac{1-4\beta+2\beta^2}{4(2-\beta)}  \theta^2+o(\theta^2).
\label{series-EY}
 \end{eqnarray}
 Since $ \frac{1-4\beta+2\beta^2}{4(2-\beta)} =\frac{1-r}{4(r-2)}$,  this proves the lemma when $r<1$.\\
 
 Consider now the case where $r\geq 2$.  Notice that $1<\beta\leq 2$ when $r\geq 2$. It is easily seen from  (\ref{def-Xi}) that, for $r>1$, 
\[
\frac{d}{dz}\P( \Xi(\theta,X_r)<z)=
\left\{\begin{array}{ll}
\frac{\beta (\theta (\beta-1))^\beta}{ (z-1+\theta\beta)^{\beta+1}}&\mbox{ if $z\geq 1-\theta$}\\
0& \mbox{ if $z<1-\theta$,}
        \end{array}
\right.
 \]
which yields
\begin{eqnarray}
\E\left[\sqrt{ \Xi(\theta,X_r)}\right]&=&\beta \theta^\beta  (\beta-1))^\beta \int_{1-\theta}^\infty \frac{\sqrt{z}}{(z-1+\theta\beta)^{\beta+1}}dz\nonumber\\
&=&\frac{2\beta \theta^\beta (\beta-1)^\beta}{(1-\theta\beta)^{\beta-1/2}}\int_{\sqrt{\frac{1-\theta}{1- \theta\beta}}}^\infty \frac{t^2}{(t^2-1)^{\beta+1}} dt.
\label{sqrtY-r-more-1}
\end{eqnarray}
We are now ready to address the case when $r\geq 2$.\\

Assume first that $r=2$, so that $\beta=2$, $\xi_2\equiv \xi_2(\theta) = \frac{ \theta}{1-2\theta}$, and $\theta = \frac{\xi}{2 \xi + 1}$.
 By (\ref{sqrtY-r-more-1}) we have 
\begin{align*}
\E\left[\sqrt{ \Xi(\theta,X_2)}\right]&=4\left(1+2\xi_2\right)^{-1/2} \xi_2^2
\int_{\sqrt{\xi_2+1}}^\infty \frac{t^2}{(t^2-1)^{3}} dt\\
&=2\left(1+2\xi_2\right)^{-1/2} \xi_2^2
\int_{\xi_2}^\infty \frac{\sqrt{y+1}}{y^{3}} dy.
\end{align*}
Let 
$h(y) = \frac{1+\frac{1}{2} y - \sqrt{y+1}}{y^3}.$
We have (Hint: use L'H\^opital's rule to get the 2nd equality)
\[
\lim_{\xi_2\to 0}\frac{2\xi_2^2  \int_{\xi_2}^\infty \frac{\sqrt{y+1}}{y^{3}} dy - 1 -  \xi_2}{2 \xi_2^2 \log \xi_2}
= \lim_{\xi_2\to 0}  \frac{-\int_{\xi_2}^\infty h(y) dy}{\log\xi_2}= \lim_{\xi_2\to 0} \frac{h(\xi_2)}{\xi_2^{-1}}
= \lim_{\xi_2\to 0} \frac{1+\frac{1}{2}\xi_2 - \sqrt{\xi_2+1}}{\xi_2^{2} }= \frac{1}{8},
\]
and
\[
2\xi_2^2  \int_{\xi_2}^\infty \frac{\sqrt{y+1}}{y^{3}} dy = 1+ \xi_2 + \frac{1}{4}  \xi_2^2\log \xi_2 + o(\xi_2^2 \log \xi_2).
\]
It follows that
\begin{eqnarray}
\E\left[\sqrt{ \Xi(\theta,X_2)}\right] &=& \left(1+2\xi_2\right)^{-1/2} 2\xi_2^2
\int_{\xi_2}^\infty \frac{\sqrt{y+1}}{y^{3}} dy\nonumber\\
& =& \left(1- \xi_2  + O(\xi_2^2)\right) \left(1+ \xi_2 +\frac{1}{4}  \xi_2^2\log \xi_2 + o(\xi_2^2 \log \xi_2)\right)\nonumber\\
&=& 1 + \frac{1}{4}  \xi_2^2\log \xi_2 + o(\xi_2^2 \log \xi_2).
\label{expansion-Y-proof-b}
\end{eqnarray}
This proves the lemma when $r=2$.\\

Finally, assume that $r>2$.  Recall that $\xi_r \equiv \xi_r(\theta)= \frac{(\beta-1)\theta}{1-\beta\theta}$,  so that $\theta = \frac{\xi_r}{\beta \xi_r + \beta - 1}$ and, by
 (\ref{sqrtY-r-more-1}),
\begin{align*}
\E\left[\sqrt{ \Xi(\theta,X_r)}\right]&=2\beta\left(1+\frac{\beta}{\beta - 1}\xi_r\right)^{-1/2} \xi_r^\beta
\int_{\sqrt{\xi_r+1}}^\infty \frac{x^2}{(x^2-1)^{\beta+1}} dx\\
&=\beta\left(1+\frac{\beta}{\beta - 1}\xi_r\right)^{-1/2} \xi_r^\beta
\int_{\xi_r}^\infty \frac{\sqrt{y+1}}{y^{\beta+1}} dy.
\end{align*}
Let 
\[
h(y) = \frac{1+\frac{1}{2} y - \sqrt{y+1}}{y^{\beta+1}}.
\]
Note that $h(y)>0$ for $y>0$. As $y\to\infty$, $h(y) \sim \frac{1}{2} y^{-\beta}$. As $y\to 0$, $h(y) \sim \frac{1}{8} y^{1-\beta}$. Since $\beta\in (1,2)$ for $r>2$, the generalized integral $I_\beta:=\beta \int_0^\infty h(y) dy$ is finite and positive. 

Therefore,
\begin{eqnarray*}
\lim_{\xi_r\to 0}\frac{\beta\xi_r^\beta  \int_{\xi_r}^\infty \frac{\sqrt{y+1}}{y^{\beta+1}} dy - 1 - \frac{\beta}{2(\beta-1)} \xi_r}{\beta \xi_r^\beta}
&=& \lim_{\xi_r\to 0}  \int_{\xi_r}^\infty \frac{\sqrt{y+1}}{y^{\beta+1}} dy -\frac{1}{\beta} \xi_r^{-\beta} - \frac{1}{2(\beta-1)} \xi_r^{1-\beta}\\
&=&\lim_{\xi_r\to 0} -\int_{\xi_r}^\infty h(y)dy = -\frac{I_\beta}{\beta},
\end{eqnarray*}
and
\[
\beta\xi_r^\beta  \int_{\xi_r}^\infty \frac{\sqrt{y+1}}{y^{\beta+1}} dy = 1+  \frac{\beta}{2(\beta-1)} \xi_r -  I_\beta  \xi_r^\beta + o(\xi_r^\beta).
\]
It follows that
\begin{eqnarray}
\E\left[\sqrt{ \Xi(\theta, X_r}\right] &=& \left(1+\frac{\beta}{\beta - 1}\xi_r\right)^{-1/2} \beta\xi_r^\beta
\int_{\xi_r}^\infty \frac{\sqrt{y+1}}{y^{\beta+1}} dy\nonumber\\
& =& \left(1-\frac{\beta}{2(\beta-1)} \xi_r  + O(\xi_r^2)\right) \left(1+  \frac{\beta}{2(\beta-1)} \xi_r -  I_\beta \xi_r^\beta + o(\xi_r^\beta)\right)\nonumber\\
&=& 1-  I_\beta \xi_r^\beta + o(\xi_r^\beta).
\label{limit-c}
\end{eqnarray}
This  proves the lemma  when $r>2$.
\end{proof}


\section{Appendix}
\label{app:Dn}

\begin{lemma}
\label{lem:Dn}
\begin{equation}
\lim_n D_n=0, 
\label{limit-Dn}
\end{equation}
where $D_n$ is defined in (\ref{def-Dn}).
\end{lemma}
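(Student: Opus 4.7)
\textbf{Proof plan for Lemma~\ref{lem:Dn}.}

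My plan is to control $D_n$ by straightforward pointwise estimation of the integrand, rather than by dominated convergence, since the integrand itself decays uniformly like $\log\phi(n)/\phi(n)^2$ once the three nontrivial factors in it are bounded separately. Recall that $\phi(n)\geq 8$, $\delta\in(0,1]$, $\lim_n\phi(n)=\infty$, and
\[
\xi_2 \;=\; \frac{\delta e^{-\mu v}}{\phi(n)-2\delta e^{-\mu v}}, \qquad 0<\xi_2\leq \frac{\delta}{\phi(n)-2\delta}.
\]
In particular $\xi_2\to 0$ uniformly in $v\geq 0$ as $n\to\infty$.

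First I would handle the correction term $\Delta_2(\xi_2)/(\xi_2^2\log\xi_2)$. Since $\Delta_2(z)=o(z^2\log z)$ as $z\to 0^+$, for any $\varepsilon>0$ there exists $\eta>0$ such that $|\Delta_2(z)|\leq \varepsilon\,|z^2\log z|$ whenever $0<z<\eta$. By the uniform bound on $\xi_2$ displayed above, there exists $n_0$ such that $\xi_2<\eta$ for every $v\geq 0$ as soon as $n\geq n_0$. Hence for $n\geq n_0$,
\[
\left|\tfrac14+\frac{\Delta_2(\xi_2)}{\xi_2^2\log\xi_2}\right|\;\leq\;\tfrac14+\varepsilon\;\leq\;1,
\]
giving a uniform $O(1)$ bound on the rightmost factor of the integrand. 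Next I would bound $|\log\xi_2|$: from $\log\xi_2=\log\delta-\mu v-\log(\phi(n)-2\delta e^{-\mu v})$ and $\phi(n)-2\delta e^{-\mu v}\in[\phi(n)-2,\phi(n)]$, I obtain $\log\xi_2\leq 0$ and
\[
|\log\xi_2|\;\leq\;\log\phi(n)+\mu v+|\log\delta|.
\]
Finally, the denominator satisfies $(\phi(n)-2\delta e^{-\mu v})^2\geq(\phi(n)-2)^2$.

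Combining these three estimates,
\[
|D_n|\;\leq\;\frac{1}{(\phi(n)-2)^2}\int_0^\infty \lambda e^{-(\lambda+2\mu)v}\bigl(\log\phi(n)+\mu v+|\log\delta|\bigr)\,dv,
\]
and the integral equals $\tfrac{\lambda}{\lambda+2\mu}\bigl(\log\phi(n)+|\log\delta|\bigr)+\tfrac{\mu\lambda}{(\lambda+2\mu)^2}=O(\log\phi(n))$. Therefore $|D_n|=O\bigl(\log\phi(n)/\phi(n)^2\bigr)\to 0$, which is the claim.

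I do not expect any real obstacle here; the only point needing care is the uniform-in-$v$ smallness of $\xi_2$, which is what allows me to invoke the little-$o$ property of $\Delta_2$ globally on $[0,\infty)$ rather than merely pointwise. Everything else reduces to the explicit elementary bounds above.
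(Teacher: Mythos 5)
Your proof is correct. The key observations match the paper's first step (using $\xi_2 \to 0$ uniformly in $v$ to turn the $o(\cdot)$ hypothesis on $\Delta_2$ into a uniform bound $\tfrac14+\varepsilon$ on the second factor), but you then diverge. The paper defines $a_n(v) := \bigl|\log(\phi(n)-2\delta e^{-\mu v})+\mu v-\log\delta\bigr|/(\phi(n)-2\delta e^{-\mu v})^2$, bounds it by an $n$-independent integrable function, verifies $a_n(v)\to 0$ pointwise, and invokes the dominated (bounded) convergence theorem. You instead simply split $|\log\xi_2|\le \log\phi(n)+\mu v+|\log\delta|$, bound the denominator by $(\phi(n)-2)^2$, and integrate directly, obtaining the quantitative rate $|D_n|=O(\log\phi(n)/\phi(n)^2)$. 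Your route is more elementary (no convergence theorem needed) and slightly stronger, since it produces an explicit decay rate; this rate is also exactly what Lemma~\ref{lem:nDn} later needs, so in some sense your bound anticipates the next step more efficiently than the paper's DCT argument does. The only delicate point, which you correctly flag, is the uniformity in $v$ of $\xi_2\to 0$, and it holds because $\xi_2 \le \delta/(\phi(n)-2\delta)$ for all $v\ge 0$. All elementary estimates check out, including the sign of $\log\xi_2$ (which follows from $\xi_2\le \delta/(\phi(n)-2\delta)<1$ when $\phi(n)\ge 8$ and $\delta\le 1$) and the value of the resulting exponential-moment integral. No gaps.
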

\begin{proof}
 Fix $\epsilon>0$. Since $\Delta_2(z)=o(z^2\log z)$ there exists $z_\epsilon>0$ such that for all $0<z<z_\epsilon$, 
$\left|\frac{\Delta_2(z)}{z^2\log z}\right|<\epsilon$. Since for all $n$ such that $\frac{\delta}{\phi(n)-2\delta}<z_\epsilon$ we have 
$\xi_2=\frac{\delta e^{-\mu v}} {\phi(n)-2\delta e^{-\mu v}}<z_\epsilon$ for all $v\geq 0$ (Hint: the mapping $v\to \xi_2$ is nonincreasing
in $[0,\infty)$ and $\xi_2=\frac{\delta}{\phi(n)-2\delta}$ when $v=0$), we conclude that for $n$ large enough,
\begin{equation}
\label{Delta_2-bound}
\sup_{v\geq 0}\left|\frac{\Delta_2(\xi_2)}{\xi_2^2\log \xi_2}\right|<\epsilon.
\end{equation}
Hence, for $n$ large enough,
\begin{eqnarray}
|D_n| &\leq&  \left(\frac{1}{4} +\epsilon\right) \int_0^\infty \lambda e^{-(\lambda+2\mu) v} \left| \frac{\log \xi_2}{(\phi(n)-2\delta e^{-\mu v})^2}\right| dv\nonumber\\
&=&\left(\frac{1}{4} +\epsilon\right) \int_0^\infty \lambda e^{-(\lambda+2\mu) v}\left|\frac{\log(\phi(n)-2\delta e^{-\mu v})+\mu v-\log \delta}{(\phi(n)-2\delta e^{-\mu v})^2}\right|dv.
\label{inq-Y-2}
\end{eqnarray}
For $n$ large enough 
\begin{eqnarray}
a_n(v)&:=&\left|\frac{\log(\phi(n)-2\delta e^{-\mu v})+\mu v-\log \delta}{(\phi(n)-2\delta e^{-\mu v})^2}\right|
 \label{def-an}\\
&\leq&
\mu v -\log \delta+\frac{|\log(\phi(n)-2\delta e^{-\mu v})|}{(\phi(n)-2\delta e^{-\mu v})^2}\nonumber
\end{eqnarray}
for all $v\geq 0$.
It is easy to check that  for all $n$ such that $\phi(n)>2\delta +\sqrt{e}$,  the mapping $v\to\frac{\log(\phi(n)-2-\delta e^{-\lambda v})}{(\phi(n)-2\delta e^{-\lambda v})^2}$ is non-decreasing in $[0,\infty)$. Therefore, for all $n$ such that $\phi(n)>2\delta  +\sqrt{e}$,   
\[
\frac{\log(\phi(n)-2\delta e^{-\mu v})}{(\phi(n)-2\delta e^{-\mu v})^2} \leq    \frac{\log(\phi(n)-2\delta)}{(\phi(n)-2\delta)^2}\quad \hbox{for all } v\geq 0.
\]
This shows that for all $n$ such that $\phi(n)>2\delta  +\sqrt{e}$ [Hint: $\phi(n)-2\delta e^{-\lambda v}>1$ for all $v\geq 0$ when $\phi(n)>2\delta  +\sqrt{e}$]
\begin{eqnarray}
\frac{|\log(\phi(n)-2\delta e^{-\mu v})|}{(\phi(n)-2\delta e^{-\mu v})^2} &=& \frac{\log(\phi(n)-2\delta e^{-\mu v})}{(\phi(n)-2\delta e^{-\mu v})^2}\nonumber\\
&\leq& \frac{\log(\phi(n)-2\delta)}{(\phi(n)-2\delta)^2}  \quad \forall v\geq 0.
\label{inq200} 
\end{eqnarray}
We conclude from (\ref{def-an}) and (\ref{inq200}) that for $n$ large enough
[Hint: for $n$ large enough,  $\log(\phi(n)-2\delta)/(\phi(n)-2\delta)^2)<1$ since  $\log t/t^2\to 0$ as $t\to \infty$ and $\phi(n)\to\infty$ as $n\to \infty$]
\[
0\leq a_n(v)\leq \mu v +1-\log \delta\quad \hbox{for all } v\geq 0.
\]
Since for every $v\geq 0$, $a_n(v)\to  0$ as $n\to \infty$ (cf. (\ref{def-an})), and 
\[
\int_0^\infty \lambda e^{-(\lambda+2\mu) v} ( \mu v +1-\log \delta) dv =\frac{\lambda \mu}{(\lambda+2\mu)^2} +\frac{\lambda (1-\log \delta)}{\lambda+2\mu}<\infty,
\]
we may apply the Bounded Convergence Theorem to the sequence $\{a_n(v)\}_n$, to get from (\ref{inq-Y-2}) that
\[
\lim_n |D_n|\leq \left(\frac{1}{4}+\epsilon\right)\lim_n\int_0^\infty \lambda e^{-(\lambda+2\mu) v} a_n(v)dv=  
 \left(\frac{1}{4}+\epsilon\right)\int_0^\infty \lambda e^{-(\lambda+2\mu) v} \lim_n a_n(v)dv=0.
\]
This concludes the proof of the lemma. 
\end{proof}
\begin{lemma}
\label{lem:nDn}

Assume that $T(n)=\mathcal{O}(\sqrt{n/\log n})$. Then $nD_n$ is bounded as $n\to\infty$.
\end{lemma}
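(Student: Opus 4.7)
The plan is to refine the pointwise bound developed in the proof of Lemma~\ref{lem:Dn} into a quantitative estimate $|D_n| = \mathcal{O}(\log\phi(n)/\phi(n)^2)$, and then combine this with the growth constraint $\phi(n) = \Omega(\sqrt{n\log n})$ imposed by the hypothesis $T(n)=\mathcal{O}(\sqrt{n/\log n})$ to conclude that the factor of $n$ is absorbed.

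First, I would reuse estimate \eqref{Delta_2-bound}: fix $\epsilon>0$, so that for all $n$ large enough,
\[
|D_n| \leq \left(\tfrac{1}{4}+\epsilon\right)\int_0^\infty \lambda e^{-(\lambda+2\mu)v}\,\frac{|\log \xi_2|}{(\phi(n)-2\delta e^{-\mu v})^2}\,dv.
\]
Since $e^{-\mu v}\leq 1$, the denominator is bounded below by $(\phi(n)-2\delta)^2$, which I pull out of the integral. For the numerator, using $\log\xi_2 = \log\delta - \mu v - \log(\phi(n)-2\delta e^{-\mu v})$ together with $\phi(n)-2\delta e^{-\mu v}\leq \phi(n)$, I get
\[
|\log\xi_2|\leq |\log\delta| + \mu v + \log\phi(n)
\]
for $n$ large enough that $\log\phi(n)>0$. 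Integrating this linear-in-$v$ bound against $\lambda e^{-(\lambda+2\mu)v}$ gives a quantity of the form $A + B\log\phi(n)$, with $A,B$ finite constants independent of $n$. Hence
\[
|D_n|\leq \frac{(\tfrac{1}{4}+\epsilon)\,(A+B\log\phi(n))}{(\phi(n)-2\delta)^2} = \mathcal{O}\!\left(\frac{\log\phi(n)}{\phi(n)^2}\right).
\]

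The hypothesis $T(n) = \delta n/\phi(n) = \mathcal{O}(\sqrt{n/\log n})$ gives a constant $c>0$ with $\phi(n)\geq c\sqrt{n\log n}$ for all $n$ large. Then $(\phi(n)-2\delta)^2 \geq c^2 n\log n\,(1+o(1))$ and $\log\phi(n)=\tfrac{1}{2}\log n + \tfrac{1}{2}\log\log n + O(1)$, so
\[
n|D_n| \leq \frac{(\tfrac{1}{4}+\epsilon)\,n\,(A+B\log\phi(n))}{(\phi(n)-2\delta)^2} = \frac{\tfrac{1}{2}B\log n + O(\log\log n)}{c^2\log n\,(1+o(1))}\cdot(\tfrac{1}{4}+\epsilon) + o(1) = O(1),
\]
which is the desired bound.

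The argument is essentially quantitative bookkeeping on top of the qualitative limit already established in Lemma~\ref{lem:Dn}, so I do not expect a substantial obstacle. The one delicate point is the handling of signs: $\xi_2\in(0,1)$ so $\log\xi_2<0$, and the factor $\log\phi(n)$ in the numerator must be \emph{exactly} matched (not dominated) by the factor $\log n$ in $\phi(n)^2\geq c^2 n\log n$. This tight matching is precisely why the threshold $\sqrt{n/\log n}$ (rather than $\sqrt{n}$) appears at the phase transition $\mu_1=2\mu_2$: a polylog correction in $\phi(n)$ is needed to cancel the logarithmic blow-up in $D_n$.
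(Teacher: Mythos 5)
Your overall strategy is the same as the paper's: refine the estimate from Lemma~\ref{lem:Dn} to $|D_n|=\mathcal{O}(\log\phi(n)/\phi(n)^2)$, then invoke the growth hypothesis on $\phi(n)$ to absorb the factor of $n$. The first half of your argument (the bound $|\log\xi_2|\le|\log\delta|+\mu v+\log\phi(n)$, the uniform lower bound on the denominator, integrating the linear-in-$v$ term) is correct and matches the paper in substance.

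There is, however, a genuine gap in the last step. From $T(n)=\mathcal{O}(\sqrt{n/\log n})$ you correctly get $\phi(n)\ge c\sqrt{n\log n}$, which is a \emph{lower} bound, but you then assert the \emph{equality} $\log\phi(n)=\tfrac12\log n+\tfrac12\log\log n+O(1)$. That identity holds only if $\phi(n)=\Theta(\sqrt{n\log n})$, i.e.\ if there is also a matching upper bound; nothing in the hypothesis prevents $\phi(n)$ from being, say, $n$ or $e^{\sqrt{n}}$, in which case $\log\phi(n)$ is much larger than $\tfrac12\log n$. As written, your manipulation substitutes a lower bound in the denominator while pretending to have an upper bound of the same order on $\log\phi(n)$ in the numerator, which is not valid. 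The conclusion is still true, but it needs an additional step: either observe that for $n$ fixed large, $x\mapsto\log x/(x-2\delta)^2$ is decreasing for $x$ large, so the worst case over $\phi(n)\ge c\sqrt{n\log n}$ is attained at $\phi(n)=c\sqrt{n\log n}$; or do the case split the paper does, distinguishing $\limsup_n\phi(n)/\sqrt{n\log n}<\infty$ (where your asymptotics are legitimate) from $\limsup_n\phi(n)/\sqrt{n\log n}=\infty$ (where the ratio tends to $0$). Your closing remark about the logarithm needing to be ``exactly matched'' is true along the critical scale $\phi(n)\asymp\sqrt{n\log n}$, but the argument needs to also handle $\phi(n)$ growing faster than that.
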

\begin{proof}
Define
\[
f_n(v):=- \frac{n \log \xi_2 }{(\phi(n)-2\delta e^{-\mu v})^2} \left(\frac{1}{4}+\frac{\Delta_2(\xi_2)}{\xi_2^2\log \xi_2}\right),
\]
so that  (cf. (\ref{def-Dn}))
\begin{equation}
\label{nDn}
nD_n= -\int_0^\infty \lambda e^{-(\lambda +2\mu)v} f_n(v)dv.
\end{equation}
Since $\xi_2=\frac{\delta e^{-\mu v}}{\phi(n)-2\delta e^{-\mu v}}$, $f_n(v)$ rewrites
\begin{equation}
f_n(v)=\frac{n\left(\log\left(\phi(n)-2\delta e^{-\mu v}\right)-\log \delta+\mu v \right)}{(\phi(n)-2\delta e^{-\mu v})^2}  \left(\frac{1}{4}+\frac{\Delta_2(\xi_2)}{\xi_2^2\log \xi_2}\right).
\label{def:fnv}
\end{equation}
For any $v\geq 0$ and $\delta\in (0,1)$, we see from (\ref{def:fnv}) that
\begin{equation}
\label{inq-fn}
f_n(v)\geq \frac{n(\log \phi(n)-2)}{(\phi(n)-2\delta e^{-\mu v})^2} \left(\frac{1}{4}+\frac{\Delta_2(\xi_2)}{\xi_2^2\log \xi_2}\right).
\end{equation}
Thanks to assumption (\ref{hyp-phi}), $\log \phi(n)>2$ for $n$ large enough. Also,  by (\ref{Delta_2-bound}) 
$\sup_{v\geq 0}\frac{\Delta_2(\xi_2)}{\xi_2^2\log \xi_2}$ can be made arbitrarily small by letting $n\to \infty$. These two properties combined show from (\ref{inq-fn}) that
$f_n(v)\geq 0$ for $n$ large enough.

On the other hand, from (\ref{Delta_2-bound})  and (\ref{def:fnv}) we see that, for $n$ large enough,
\[
f_n(v)\leq \left(\frac{1}{4}+\epsilon\right) \frac{n\log\phi(n)-\log \delta +\mu v}{(\phi(n)-2)^2}, \quad \forall v\geq 0.
\]
Therefore, for $n$ large enough ,
\begin{eqnarray}
\lefteqn{0\leq \int_0^\infty \lambda e^{-(\lambda+2\mu) v} f_n(v)dv }\nonumber\\
&\leq &\left(\frac{1}{4}+\epsilon\right) \int_0^\infty \lambda e^{-(\lambda+2\mu) v}  \frac{n\log\phi(n)-\log \delta +\mu v}{(\phi(n)-2)^2}dv\nonumber\\
&=&\left(\frac{1}{4}+\epsilon\right)\left( \frac{\lambda}{\lambda +2\mu}\right)  \frac{n\log\phi(n)-\log \delta +\mu/(\lambda+2\mu)}{(\phi(n)-2)^2}\nonumber\\
&\sim_n&  \left(\frac{1}{4}+\epsilon\right)\left( \frac{\lambda}{\lambda +2\mu}\right)  \frac{n\log\phi(n)}{\phi(n)^2},
\label{inq-int-fn}
\end{eqnarray}
by using that $\lim_n \phi(n)=\infty$. We are left with finding $\phi$ such that $\frac{n\log\phi(n)}{\phi(n)^2}=\mathcal{O}(1)$.

To this end,  let $T(n)=\mathcal{O}(\sqrt{n/\log n})$ or, equivalently by (\ref{Tn-delta}), 
 $\liminf_n \frac{\phi(n)}{\sqrt{n\log n}}=a$ for some $a>0$. 
Let us write $n\log\phi(n)/\phi(n)^2$ as
\begin{equation}
\label{identity-temp}
\frac{n\log\phi(n)}{\phi(n)^2}=\frac{1}{\left(\phi(n)/\sqrt{n\log n}\right)^2}\left(\frac{\log(\phi(n)/\sqrt{n\log n})}{\log n}+\frac{\log(\log n)}{\log n}+\frac{1}{2}\right).
\end{equation}
Assume that $\limsup_n \frac{\phi(n)}{\sqrt{n\log n}}=b<\infty$. Then,
\[
\limsup_{n\to \infty} \frac{n\log\phi(n)}{\phi(n)^2}\leq \frac{1}{a^2} \limsup_{n\to \infty} \left(\frac{\log(\phi(n)/\sqrt{n\log n})}{\log n}+\frac{\log(\log n)}{\log n}+\frac{1}{2}\right)=\frac{1}{2a^2}<\infty
\]
by using $\lim_{x\to\infty}\frac{\log x}{x}=0$.  Assume now that $\limsup_n\frac{\phi(n)}{\sqrt{n\log n}}=\infty$. By rewriting (\ref{identity-temp}) as 
\[
\frac{n\log\phi(n)}{\phi(n)^2}=\frac{1}{\log n} \cdot \frac{\log(\phi(n)/\sqrt{n\log n}}{\left(\phi(n)/\sqrt{n\log n}\right)^2} + 
\frac{1}{\left(\phi(n)/\sqrt{n\log n}\right)^2}\left(\frac{\log(\log n)}{\log n}+\frac{1}{2}\right),
\]
we immediately conclude that $\lim_n \frac{n\log\phi(n)}{\phi(n)^2}=0$ thanks again to $\lim_{x\to\infty}\frac{\log x}{x}=0$.  
This shows that $\frac{n\log\phi(n)}{\phi(n)^2}=\mathcal{O}(1)$ when $T(n)=\mathcal{O}(\sqrt{n/\log n})$, and therefore by (\ref{inq-int-fn}) and (\ref{nDn}),  
that 
\begin{equation}
\label{nDn-bounded}
nD_n=\mathcal{O}(1),
\end{equation}
when $T(n)=\mathcal{O}(\sqrt{n/\log n})$, which completes the proof.
\end{proof}
\begin{lemma}
\label{lem:En}
\begin{equation}
\lim_n E_n=0, 
\label{limit-Dn}
\end{equation}
where $E_n$ is defined in (\ref{def-En}).
\end{lemma}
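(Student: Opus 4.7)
The plan is to mimic the structure of the proof of Lemma \ref{lem:Dn}, using the definition $\Delta_r(z) = o(z^\beta)$ to control the non-leading part of the integrand uniformly in $v$, and then exploiting the fact that the denominator $(e^{\mu v}\phi(n)-\delta\beta)^\beta$ is bounded below uniformly in $v$ by a quantity diverging to infinity.

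First, I would observe that $\xi_r \equiv \xi_r(v,n) = \frac{\delta(\beta-1)}{e^{\mu v}\phi(n)-\beta\delta}$ is nonincreasing in $v \geq 0$, with maximum value attained at $v=0$ equal to $\delta(\beta-1)/(\phi(n)-\beta\delta)$. Since $\phi(n)\to\infty$, this sup tends to $0$, so $\sup_{v\geq 0}\xi_r \to 0$ as $n\to\infty$. Fix $\epsilon > 0$. Because $\Delta_r(z) = o(z^\beta)$ as $z \to 0^+$, there exists $z_\epsilon > 0$ such that $|\Delta_r(z)/z^\beta| < \epsilon$ for all $z \in (0, z_\epsilon)$. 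Therefore, for all $n$ large enough,
\[
\sup_{v\geq 0}\left|\frac{\Delta_r(\xi_r)}{\xi_r^\beta}\right| < \epsilon.
\]

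Second, since $e^{\mu v} \geq 1$ for all $v\geq 0$ and $\phi(n) > \beta\delta$ for $n$ large enough, we have $e^{\mu v}\phi(n)-\delta\beta \geq \phi(n)-\delta\beta > 0$ uniformly in $v$. Using $\beta > 1$,
\[
\frac{1}{(e^{\mu v}\phi(n)-\delta\beta)^\beta} \leq \frac{1}{(\phi(n)-\delta\beta)^\beta}, \quad \forall v\geq 0.
\]
Combining the two bounds, for $n$ large enough,
\[
|E_n| \leq (I_\beta + \epsilon)\int_0^\infty \lambda e^{-\lambda v}\frac{dv}{(e^{\mu v}\phi(n)-\delta\beta)^\beta} \leq \frac{I_\beta + \epsilon}{(\phi(n)-\delta\beta)^\beta}.
\]
Since $\lim_n \phi(n) = \infty$, the right-hand side tends to $0$, yielding $\lim_n E_n = 0$.

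The proof is genuinely short; there is no real obstacle. The only point that must be handled cleanly is the uniform-in-$v$ control of the $o(\xi_r^\beta)$ term, which is straightforward because $v\mapsto \xi_r$ is monotone and its supremum over $v$ vanishes. If one wished to avoid pulling out the uniform bound $(\phi(n)-\delta\beta)^{-\beta}$, an equally valid alternative would be to apply the Dominated Convergence Theorem, using the pointwise limit of the integrand (which is $0$) together with an integrable dominant such as $\lambda e^{-\lambda v}$ (valid for $n$ large, once the denominator exceeds $1$).
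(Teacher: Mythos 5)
Your proposal is correct and follows essentially the same route as the paper: both proofs first bound $\sup_{v\ge 0}\bigl|\Delta_r(\xi_r)/\xi_r^\beta\bigr|<\epsilon$ for $n$ large by combining the $o(\xi_r^\beta)$ property with the monotonicity of $v\mapsto\xi_r$, and then use $e^{\mu v}\ge 1$ to pull the factor $(\phi(n)-\delta\beta)^{-\beta}$ out of the integral, obtaining $|E_n|\le (I_\beta+\epsilon)/(\phi(n)-\delta\beta)^\beta\to 0$. The dominated-convergence variant you sketch at the end is a valid alternative but unnecessary here.
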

\begin{proof}
Fix $\epsilon>0$. Since $\Delta_r(z)=o(z^{\beta})$ when $r>2$, there exists $z_\epsilon>0$ such that for all $0<z<z_\epsilon$, 
$\left|\frac{\Delta_r(z)} {z^{\beta}  }\right|<\epsilon$. Since for all $n$ such that $\frac{\delta(\beta-1)}{\phi(n)-\delta \beta}<z_\epsilon$ we have 
$\xi_r=\frac{\delta (\beta-1)} {e^{\mu v}\phi(n)-\delta \beta}<z_\epsilon$ for all $v\geq 0$ (Hint: the mapping $v\to \xi_r$ is nonincreasing
in $[0,\infty)$ and $\xi_r=\frac{\delta(\beta-1)}{\phi(n)-\delta\beta}$ when $v=0$), we conclude that for $n$ large enough,
\begin{equation}
\label{Delta_r-bound}
\sup_{v\geq 0}\left|\frac{\Delta_r(\xi_r)}{\xi_r^\beta}\right|<\epsilon.
\end{equation}
Hence, for $n$ large enough,
\begin{eqnarray*}
|E_n| &\leq& (I_\beta +\epsilon) \int_0^\infty \frac{\lambda e^{-\lambda v}}{(\phi(n)e^{\mu v}-\delta\beta )^\beta} dv \nonumber\\
&\leq& \frac{I_\beta +\epsilon}{(\phi(n)-\delta\beta)^\beta} \int_0^\infty \lambda e^{-\lambda v} dv \nonumber\\
&= &\frac{I_\beta +\epsilon}{(\phi(n)-\delta\beta )^\beta} \to 0\quad \hbox{as } n\to\infty.
\label{inq-En}
\end{eqnarray*}
\end{proof}
\begin{lemma}
\label{lem:nEn}

Assume that $T(n)=\mathcal{O}(n^{\mu_2/\mu_1})$ with $\mu_1>2\mu_2$. Then $E_n$ defined in (\ref{def-En}) is bounded as $n\to\infty$.
\end{lemma}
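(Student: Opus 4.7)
The plan is to show $|nE_n|$ is bounded by combining a uniform upper bound on the integrand of $E_n$ with the hypothesis on $T(n)$, which translates into a lower bound on $\phi(n)$. First, by exactly the same argument as in (\ref{Delta_r-bound}) in the proof of Lemma~\ref{lem:En}, for any fixed $\epsilon>0$ there exists $n_1$ such that for all $n\ge n_1$,
\[
\sup_{v\ge 0}\left|\frac{\Delta_r(\xi_r)}{\xi_r^\beta}\right|\le \epsilon,\qquad\text{hence}\qquad \left|I_\beta -\frac{\Delta_r(\xi_r)}{\xi_r^\beta}\right|\le I_\beta+\epsilon
\]
uniformly in $v\ge 0$. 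Second, since $\phi(n)\to\infty$, for $n$ large enough $\phi(n)\ge 2\delta\beta$, and then
\[
e^{\mu v}\phi(n)-\delta\beta \;\ge\; \tfrac{1}{2}\,e^{\mu v}\phi(n),\qquad\text{so}\qquad (e^{\mu v}\phi(n)-\delta\beta)^\beta \;\ge\; 2^{-\beta} e^{\mu\beta v}\phi(n)^\beta.
\]

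Plugging these two bounds into the definition (\ref{def-En}) and multiplying by $n$,
\[
|nE_n|\;\le\; \frac{n\cdot 2^\beta (I_\beta+\epsilon)}{\phi(n)^\beta}\int_0^\infty \lambda e^{-(\lambda+\mu\beta)v}\,dv \;=\; \frac{n\cdot 2^\beta (I_\beta+\epsilon)\lambda}{\phi(n)^\beta(\lambda+\mu\beta)}.
\]
It remains to show $n/\phi(n)^\beta=\mathcal{O}(1)$. Recall $\mu_2/\mu_1=1/r$ and $\beta=r/(r-1)$, so $1/\beta=(r-1)/r=1-1/r$. The hypothesis $T(n)=\delta n/\phi(n)=\mathcal{O}(n^{1/r})$ is equivalent to $\phi(n)=\Omega(n^{1-1/r})=\Omega(n^{1/\beta})$, which gives $\phi(n)^\beta=\Omega(n)$, i.e. $n/\phi(n)^\beta=\mathcal{O}(1)$. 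Combining this with the bound above yields $|nE_n|=\mathcal{O}(1)$, completing the proof.

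I do not anticipate a serious obstacle, since the structure parallels Lemma~\ref{lem:nEn}'s companion Lemma~\ref{lem:En} (and is in fact simpler than Lemma~\ref{lem:nDn}, which had to contend with a logarithmic factor). The only small care needed is to correctly convert the assumption $T(n)=\mathcal{O}(n^{\mu_2/\mu_1})$ into the exponent $1/\beta$ for $\phi(n)$, via the identity $1-\mu_2/\mu_1=1/\beta$.
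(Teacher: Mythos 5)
Your proof is correct and follows essentially the same route as the paper: a uniform bound $|I_\beta - \Delta_r(\xi_r)/\xi_r^\beta|\le I_\beta+\epsilon$ from the $o(\cdot)$ estimate, a lower bound on the denominator $(e^{\mu v}\phi(n)-\delta\beta)^\beta$ that is uniform in $v$, and then converting $T(n)=\mathcal{O}(n^{\mu_2/\mu_1})$ into $n/\phi(n)^\beta=\mathcal{O}(1)$ via $1-\mu_2/\mu_1 = 1/\beta$. The only cosmetic difference is that you bound $e^{\mu v}\phi(n)-\delta\beta\ge\tfrac12 e^{\mu v}\phi(n)$ (retaining the extra decay $e^{-\mu\beta v}$), whereas the paper simply uses $e^{\mu v}\ge 1$ and then normalizes by $n^{1/\beta}$ inside the denominator; both yield the same conclusion.
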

\begin{proof}
Define
\begin{equation}
k_n(v)=n\frac{I_\beta- \Delta_r(\xi_r)/\xi_r^\beta}{(e^{\mu\beta v}\phi(n)-\delta \beta)^\beta},\label{def-knv}
\end{equation}
where $I_\beta$ is defined in (\ref{def:Ibeta}).
With this new function we can rewrite $nE_n$ (cf. (\ref{def-En})) as 
\begin{equation}
 nE_n=  \int_0^\infty\lambda e^{-\lambda  v}k_n(v)  dv.
\label{nEn}
\end{equation}
Notice that
\begin{eqnarray}
\label{def:gn}
k_n(v) &= & \frac{I_\beta- \Delta_r(\xi_r)/\xi_r^\beta}{ (e^{\mu\beta v} \phi(n)/n^{1/\beta}-\delta \beta/n^{1/\beta})^\beta} \nonumber \\
&\leq &  \frac{I_\beta- \Delta_r(\xi_r)/\xi_r^\beta}{(\phi(n)/n^{1/\beta}-\delta \beta/n^{1/\beta})^\beta},
\label{bound-knv}
\end{eqnarray}
for all $n\geq 1$ and $v\geq 0$. Recall that $I_\beta>0$.  Let $\epsilon<I_\beta$ in  (\ref{Delta_r-bound}).  From (\ref{bound-knv}) we see that  for  $n$ large enough
\begin{equation}
\label{bounds-knv}
0\leq k_n(v)\leq \frac{I_\beta +\epsilon}{(\phi(n)/n^{1/\beta}-\delta \beta/n^{1/\beta})^\beta}\quad \hbox{for all } v\geq 0.
\end{equation}
Recall that $\beta =\frac{r}{r-1}$ yielding $r=\frac{\beta}{\beta-1}$. Assume that  $T(n)=\frac{\delta}{n\phi(n)}=\mcO(n^{1/r})$ for $\delta\in (0,1)$, or equivalently
\[
\liminf_n \frac{\phi(n)}{n^{1/\beta}}=b
\]
for some $b>0$.
From (\ref{bounds-knv}) we obtain
\begin{eqnarray}
0&\leq& \limsup_n \int_0^\infty \lambda e^{-\lambda v} k_n(v)dv\nonumber\\
&\leq& \limsup_n \frac{I_\beta +\epsilon}{  (\phi(n)/n^{1/\beta}-\delta \beta/n^{1/\beta})^\beta} \nonumber\\
&= &\frac{I_\beta +\epsilon}{  (\liminf_n\phi(n)/n^{1/\beta})^\beta}=\frac{I_\beta +\epsilon}{b^\beta}.
\label{bounds-integral}
\end{eqnarray}
This shows  that $nE_n\in \mathcal{O}(1)$. 
\end{proof}


\section{Appendix}
\label{app:B}
\begin{lemma}
\label{lem:stochastic-ordering}
For any $\theta \in [0,1]$, $r'\geq r$,
\[
\E\left[\sqrt{ \Xi(\theta,X_{r'}) }\right]\leq \E\left[\sqrt{ \Xi(\theta,X_r) }\right].
\]
\end{lemma}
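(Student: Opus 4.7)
The plan is to derive Lemma~\ref{lem:stochastic-ordering} from the data processing inequality for the Hellinger affinity $A(u_0, u_1) := \int \sqrt{u_0 u_1}\,dx = 1 - H(u_0, u_1)$, by constructing a single Markov kernel that transports the whole pair of densities at rate $r'$ to the pair at rate $r$.

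\textbf{Step 1 (reformulation).} Write $g_1^{(r)}(y) := \mu r e^{-\mu r y}$ for the density of $X_r$. Combining $Z(q,y,v) = 1 + q\rho(y,v)$ with the exponential identity $\rho(y,v) = e^{-\mu v}\bigl((g_1^{(r)}\!*\!g_2)(y)/g_1^{(r)}(y) - 1\bigr)$ from \eqref{eq:rho-g2-exp}, and setting $\theta = qe^{-\mu v}$ as in \eqref{value-Z-exp}, yields the uniform representation
\[
\Xi(\theta, y)\, g_1^{(r)}(y) = (1-\theta)\, g_1^{(r)}(y) + \theta\, (g_1^{(r)}\!*\!g_2)(y) =: f_1^{(r)}(y),
\]
valid for all $r>0$ (including $r=1$, since then $(g_1^{(r)}*g_2)(y)/g_1^{(r)}(y) = \mu y$). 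Therefore $\E[\sqrt{\Xi(\theta, X_r)}] = \int_0^\infty \sqrt{g_1^{(r)} f_1^{(r)}}\,dy = A(g_1^{(r)}, f_1^{(r)})$.

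\textbf{Step 2 (coupling).} For $r' \geq r > 0$, I claim $X_r \stackrel{d}{=} X_{r'} + Z$, with $Z$ independent of $X_{r'}$ and law $q_Z := (r/r')\delta_0 + (1 - r/r')\,\mathrm{Exp}(\mu r)$. A Laplace--Stieltjes transform check confirms this: $q_Z$ has LST $\phi(s) = r/r' + (1 - r/r')\mu r/(\mu r+s) = r(\mu r'+s)/\bigl(r'(\mu r+s)\bigr)$, and $[\mu r'/(\mu r'+s)]\cdot\phi(s) = \mu r/(\mu r+s)$, which is the LST of $X_r$. Let $T$ denote the Markov kernel of convolution with $q_Z$. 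By construction $T g_1^{(r')} = g_1^{(r)}$, and by associativity of convolution $T(g_1^{(r')}*g_2) = (g_1^{(r')}*q_Z)*g_2 = g_1^{(r)}*g_2$. Linearity of $T$ on mixtures then gives
\[
T f_1^{(r')} = (1-\theta)\, T g_1^{(r')} + \theta\, T(g_1^{(r')}*g_2) = (1-\theta)\, g_1^{(r)} + \theta\, (g_1^{(r)}*g_2) = f_1^{(r)}.
\]

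\textbf{Step 3 (data processing).} The Hellinger affinity is non-decreasing under any Markov kernel (this is data processing for the $f$-divergence $1-A$, a standard fact), so
\[
\E\bigl[\sqrt{\Xi(\theta, X_r)}\bigr] = A\bigl(T g_1^{(r')}, T f_1^{(r')}\bigr) \geq A\bigl(g_1^{(r')}, f_1^{(r')}\bigr) = \E\bigl[\sqrt{\Xi(\theta, X_{r'})}\bigr],
\]
which is the statement of the lemma. The main obstacle is Step 2: one must identify the mixture law of $Z$ with the correct point mass at $0$ so that a \emph{single} kernel $T$ transports $g_1^{(r')}\mapsto g_1^{(r)}$ and $g_1^{(r')}*g_2 \mapsto g_1^{(r)}*g_2$ simultaneously. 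Once the coupling is in place, data processing finishes the argument without any case split on $r \lessgtr 1$.
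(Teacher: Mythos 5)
Your proof is correct but takes a genuinely different route from the paper's. The paper's argument is a two–line monotone coupling in the ordinary (stochastic-ordering) sense: $X_{r'}\leq_{st}X_r$ for $r'\geq r$, $x\mapsto\Xi(\theta,x)$ is nondecreasing on $[0,\infty)$, so $\Xi(\theta,X_{r'})\leq_{st}\Xi(\theta,X_r)$ and the expectations of $\sqrt{\cdot}$ follow. In that argument the map $\Xi(\theta,\cdot)$ and its implicit rate parameter from \eqref{def-Xi} stay \emph{fixed}; only the random variable changes. You instead reinterpret $\E[\sqrt{\Xi(\theta,X_r)}]$ as the Hellinger affinity $A\bigl(g_1^{(r)},(1-\theta)g_1^{(r)}+\theta\,g_1^{(r)}\!*g_2\bigr)$, identify a $\delta_0$-plus-$\mathrm{Exp}(\mu r)$ mixture whose convolution kernel carries both members of the rate-$r'$ pair onto those of the rate-$r$ pair simultaneously, and finish by data processing for the Hellinger $f$-divergence. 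Two remarks on what this buys and costs. First, notice that your Step 1 identification requires the $r$ in $\Xi$ to \emph{match} the rate of the plugged-in variable, so what you actually establish is the system-vs.-system comparison $\E[\sqrt{\Xi_{r'}(\theta,X_{r'})}]\leq\E[\sqrt{\Xi_r(\theta,X_r)}]$, whereas the paper's proof (literally read) establishes $\E[\sqrt{\Xi_{r}(\theta,X_{r'})}]\leq\E[\sqrt{\Xi_r(\theta,X_r)}]$ with a single fixed $\Xi_r$. These are not the same statement; yours is in fact the stronger one (since $r\mapsto\Xi_r(\theta,y)$ is nondecreasing, the fixed-$\Xi$ inequality follows from the matching-$\Xi$ inequality but not conversely), and it is also the version that is actually invoked in \prettyref{sec:proof-phase-transition-exponential} to pass the non-covert conclusion from a fictitious system with $r<1$ to the genuine system with $r'\geq1$. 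Second, the cost: your machinery (the explicit mixture coupling, the LST check, the data-processing appeal) is substantially heavier than the paper's monotonicity argument, so if one only wants the fixed-$\Xi$ statement, the paper's two lines suffice and you should not discard that observation; but the affinity/data-processing framing has the virtue of proving the inter-system comparison in one uniform step without any case split on $r\lessgtr1$, and it makes transparent that what is really being compared is the Hellinger affinity of the null/alternative pair as Willie's rate degrades.
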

\begin{proof}
Fix $\theta\in[0,1]$.
When $r'\geq r$ then $X_{r'}\leq_{st} X_r$, which in turn implies that  $\Xi(\theta,X_{r'})\leq_{st} \Xi(\theta,X_r)$ as  the mapping
 $x\to \Xi(\theta,x)$ in (\ref{def-Xi}) is nondecreasing in $[0,\infty)$,

Therefore,
\[
\E\left[\sqrt{\Xi(\theta,X_{r'})}\right]\leq \E\left[\sqrt{\Xi(\theta,X_r)}\right],
\]
as the mapping $x\to \sqrt{x}$ is nondecreasing in $[0,\infty)$.
\end{proof}


\section{Appendix: Proof of  Proposition \ref{prop:Tw}}
\label{app:Tw-proof}
Recall that under the IEBP policy the system behaves as an M/G/1 queue with an exceptional first job in each busy period.
The service times of first jobs in busy periods have pdf  $\widetilde f_1$ and the  service times of the other jobs have pdf $g_1$. 
The numbers of jobs served in different busy periods are iid rvs, characterized by the random variable $M$,  so that  the expected number of 
Willie  jobs served during  $n$ W-BPs  is $T_W(n)=n\E[M]$. 

Let us calculate $\E[M]$.  To this end, introduce $\mathcal{G}_M(z)=\E[z^M]$, $|z|\leq 1$, the generating function of the number of jobs served in a busy period.  
Recall (Section \ref{sec:IEBP}) that the reconstructed service times of the first job served in different busy periods are iid rvs, and let $Y$ be a generic reconstructed service time. Let $\tau^*(s)=\E[e^{-Ys}]=\int_0^\infty e^{-s x} \widetilde f_1(x)dx $ be the LST of the reconstructed service time.
Since  the LTS of the service times all the other Willie jobs in a W-BP is $G^*_1(s)$, we get from  \cite{Bhat-64}
\begin{equation}
\label{Mz-II}
\mathcal{G}_M(z)=z\tau^*(\lambda(1-d(z)),\quad |z|\leq 1,
\end{equation}
where $d(z)$ is the root with the smallest modulus of the equation $t=zG_1^*(\lambda(1-t))$.

Noting that $d(1)=1$ and $\frac{d}{d(z)}|_{z=1} =\frac{1}{1-\lambda/\mu_1}$, we obtain from \eqref{Mz-II} 
\begin{equation}\label{eq:EM}
\E[M]=\frac{1-\lambda/\mu_1+\lambda \E[Y]}{1-\lambda/\mu_1},
\end{equation}
provided that the stability condition $\lambda/\mu_1 <1$ holds.  It remains to find $\E[Y]$. For that, we will use the identity $\E[Y]=-\frac{d\tau^*(s)}{ds}|_{s=0}$. 
But before that we need to calculate $\tau^*(s)$. 

When IEBP is enforced (or, equivalently, under $H_1$) we know that  $Y$ has pdf $\widetilde f_1$ (see Section \ref{ssec:detector}) . 
Multiplying both sides of (\ref{eq:Ztilde-rho}) by $g_1(x)$  and using the definition of $\widetilde Z(q,x)$ in (\ref{eq:Ztilde-def}) 
along with (\ref{eq:rho-tilde}) gives
\[
\widetilde f_1(x)=(1-qp) g_1(x)+q (g_1* \hat g_2)(x),
\]
where $\hat g_2(x)$ is defined in (\ref{eq:g2-hat}). Therefore,
 \begin{align} \label{eq:LT-Y}
\tau^*(s) &=\int_0^\infty e^{-s x} \widetilde f_1(x) dx =\int_0^\infty  e^{-s x} [(1-pq)g_1(x) + q(g_1*\hat g_2)(x)] dx \nonumber\\
 &= G^*_1(s)\left(1-pq +q \int_0^\infty e^{-st} \hat g_2(t)dt\right).
 \end{align}
Differentiating \eqref{eq:LT-Y} with respect to $s$ at $s=0$ and using the identity\footnote{$\int_0^\infty \hat g_2(t)dt =\int_0^\infty \lambda e^{-\lambda v} \int_v^\infty g_2(t)dt dv= \int_0^\infty \lambda e^{-\lambda v} (1-G_2(v))dv 
=1-G^*_2(\lambda)=p$.} $\int_0^\infty  \hg_2(t)dt=p$ yields
\begin{eqnarray*}
\E[Y]&=&\frac{1-pq}{\mu_1}+\frac{q}{\mu_1}\int_0^\infty \hg_2(t)dt+ q\int_0^\infty t \hat g_2(t)dt\\
&=& \frac{1}{\mu_1}+ q\int_0^\infty t \hat g_2(t)dt,
\end{eqnarray*}
By (\ref{eq:EM}), 
\begin{equation}\label{eq:EM-2}
\E[M]=\frac{1}{1-\lambda/\mu_1}+\frac{\lambda q}{1-\lambda/\mu_1} \int_0^\infty t \hat g_2(t)dt,
\end{equation}
and 
\begin{equation} \label{eq:value-TWn}
T_W(n)= n\E[M] = \frac{n}{1-\lambda/\mu_1}+\frac{\lambda q n}{1-\lambda/\mu_1} \int_0^\infty t \hat g_2(t)dt.
\end{equation}
Now we upper bound the integral in \eqref{eq:EM-2} and \eqref{eq:value-TWn}. We have
\begin{eqnarray*}
\int_0^\infty t \hat g_2(t)dt&=& \int_0^\infty \lambda e^{-\lambda v}  \int_0^\infty t g_2(v+t)dvdt\\
&\leq &  \int_0^\infty \lambda e^{-\lambda v}  \int_0^\infty (t+v)g_2(v+t)dvdt\\
&=&  \int_0^\infty \lambda e^{-\lambda v}  \int_{v}^\infty u g_2(u)dudv\\
&\leq&   \int_0^\infty \lambda e^{-\lambda v}  \int_{0}^\infty u g_2(u)dudv=\frac{1}{\mu_2}.
\end{eqnarray*}
Hence, $\E[M]\leq \frac{1+q\lambda/\mu_2}{1-\lambda/\mu_1}$ and $T_W(n)\leq n \left(\frac{1+q\lambda/\mu_2}{1-\lambda/\mu_1}\right)$. This shows the upper bound in \eqref{eq:Tw-bounds}. The lower bound is trivial.

If $g_2(x)=\mu_2 e^{-\mu_2 x}$ then   from \eqref{eq:g2-hat} and \eqref{eq:interference-prob} we find
$p=\frac{\lambda}{\mu_2+\lambda}$ and $\hg_2(x)=\frac{\lambda\mu_2}{(\lambda+\mu_2)}e^{-\mu_2 x}=p \mu_2 e^{-\mu_2 x}$, which yields  \eqref{eq:Tw-exponential}.


\section{Proof of Lemma \ref{lem:II}}
\label{app:D0}

\begin{proof}
We denote by $f^{(k)}$ the $k$th convolution of $f$ with itself. For the time being we do not make any assumption on $g_2(x)$.

Let $A$ be the event that Alice inserts a job at the end of a W-BP, with $\P(A)=q$.
Given that Alice inserts a job at the end of a W-BP, let $B_i$ ($i\geq 1$) be the event that Alice $i$th job inserted after the end of a  W-BP affects Willie first job. Notice that
\[
\P(B_i | A, V_*=t)= q^{i-1}\P\left(\sum_{l=1}^{i-1} \sigma_{2,l}<t <\sum_{l=1}^{i} \sigma_{2,l}\right),\quad i\geq 1.
\]
Let us calculate $\P_{H_1}(Y_*<x,V_*<v)$. For the sake of simplicity we will drop the subscript $H_1$.
We have
\begin{eqnarray}
\lefteqn{\P(Y_*<x,V_*<v)=}\nonumber\\
&& q\P(Y_*<x,V_*<v|A)+\bar q \P(Y_*<x,V_*<v| A^c)\nonumber\\
&=&q\int_0^v P(Y_*<x |A, V_*=t) \lambda e^{-\lambda t} dt\nonumber \\
&&+ \bar q G_1(x)(1-e^{-\lambda v}).
\label{int1000}
\end{eqnarray}
Let us focus on $P(Y_*<x|A, V_*=t)$. We have
\begin{eqnarray*}
\lefteqn{\P(Y_*<x|A, V_*=t) =\sum_{i\geq 1} P( \{Y_*<x\} \cap B_i |A, V_*=t)}\\
&&+  \P\left( \{Y_*<x \} \cap \left(\cup_{l\geq 1}B_l\right)^c \right)\\
&=&P(t <\sigma_{2,1}<x+t -\sigma_1)\\
&&+ \sum_{i\geq 2}  \int_{u=0}^t  q^{i-1}P( t-u<\sigma_{2,i}<x+t-u+\sigma_1)\\
&&\times g_2^{(i-1)}(u)du+  G_1(x)\P\left( \left(\cup_{l\geq 1}B_l\right)^c \right) \\
\end{eqnarray*}
Let us find $\P\left( \left(\cup_{l\geq 1}B_l\right)^c|A, V=t\right)$. This is the probability that no Alice job intersects with a Willie job given that Alice
inserts a job at the end of a W-BP and that $V_*=t$. Given $A$ and $V_*=t$, there is no interference if Alice inserts $i\geq 1$ jobs successfully and that she does not insert an $(i+1)$-st job.
The probability of this event is $\bar q q^{i-1}\P(\sigma_{2,1}+\cdots+\sigma_{2,i}<t)$. Therefore,
\begin{eqnarray*}
\lefteqn{\P\left ( \left(\cup_{l\geq 1}B_l\right)^c|A, V=t\right)=}\\
&&\bar q G_1(x)\sum_{l\geq 1} q^{l-1}\P(\sigma_{2,1}+\cdots+ \sigma_{2,l}<t)\\
&=&\bar q G_1(x)\sum_{i\geq 1} q^{l-1} G_2^{(l)}(t).
\end{eqnarray*}
Therefore,
\begin{eqnarray*}
\lefteqn{\P(Y<x|A, V=t)=P(t<\sigma_{2,1}<x+t-\sigma_1)}\\
&&+ \sum_{i\geq 2}  \int_{u=0}^t  q^{i-1}P(t-u< \sigma_{2,i}<x+t-u+\sigma_1) g_2^{*(i-1)}(u)du\\
&&+ \bar qG_1(x) \sum_{l\geq 1} q^{l-1} G_2^{*(l)}(t).
\end{eqnarray*}
Hence,
\begin{eqnarray*}
\lefteqn{\P(Y<x,V<v)=q \int_0^v \lambda e^{-\lambda t} P(t<\sigma_{2,1}<x+t-\sigma_1) dt}\\
&&+\int_{t=0}^v \lambda e^{-\lambda t} \Biggl[ \int_{u=0}^t  \P(t-u <\sigma_{2,i}< x+t-u-\sigma_1)\\
&&\times \sum_{i\geq 2}q^{i}  g_2^{(i-1)}(u)du+\bar q G_1(x)\sum_{l\geq 1} q^{l} G_2^{*(l)}(t)\Biggr]dt \\
&&+ \bar q G_1(x)(1-e^{-\lambda v}),
\end{eqnarray*}
which gives after conditioning on $\sigma_1$

\begin{eqnarray*}
\lefteqn{\P(Y<x,V<v)}\\
&&=q\int_{t=0}^v \lambda e^{-\lambda t} \int_{y=0}^x (G_2(x-y+t)-G_2(t))g_1(y) dydt\\
&&+q \int_{t=0}^v \lambda e^{-\lambda t}\Biggl[\int_{u=0}^t \sum_{i\geq 1}q^{i}  g_2^{*(i)}(u) \\
&&\times  \int_{y=0}^x (G_2(x-y+t-u)-G_2(t-u)) g_1(y)dy du\Biggr] dt \\
&&+\bar q G_1(x) \int_{0}^v \lambda e^{-\lambda t}  \sum_{l\geq 1} q^{l} G_2^{*(l)}(t)dt+ \bar q G_1(x)(1-e^{-\lambda v}).
\end{eqnarray*}

From now on we will assume that $G_2(t)=1-e^{-\mu_2 t}$ (Alice service times are exponential), which implies that
\[
G_2^{*(l)}(t)=1 -e^{-\mu_2 t}\sum_{m=0}^{l-1}\frac{(\mu_2 t)^m}{m!}, \quad l\geq 1.
\]
Easy algebra gives  (Hint:  $g^{*(i)}(t)=\frac{d}{dt} G_2^{*(i)}(t)$)
\begin{eqnarray}
\sum_{l\geq 1} q^{l} G_2^{*(l)}(t)
&=&\frac{q}{\bar q}\left(1- e^{-\mu_2\bar q t}\right)\label{G2-convol}\\
\sum_{i\geq 1}q^i  g_2^{(i)}(u)du&=&\mu_2 q e^{-\mu_2\bar q u}. \label{g2-convol}
\end{eqnarray}

Lengthy but easy algebra using (\ref{G2-convol})-(\ref{g2-convol}) gives

\begin{eqnarray*}
\lefteqn{\P(Y<x,V<v)=\left(1-e^{-\lambda v}\right)G_1(x)}\\
&&-\frac{pq}{1-\bar p q} \left(1-e^{-(\lambda +\mu_2 \bar q)v}\right)\frac{(g_1*g_2)(x)}{\mu_2}.
\end{eqnarray*}

Again after easy algebra, we  finally find 
\begin{eqnarray*}
\lefteqn{f_{+,1}(x,v)=\frac{\partial^2 }{\partial x\partial v}\P(Y<x,V<v)}\\
&=& \lambda e^{-\lambda v} g_1(x) \left[1+ qe^{-\mu_2\bar q v} \left(\frac{(g_1*g_2)(x)}{g_1(x)}-1\right)\right].
\end{eqnarray*}
and
\begin{eqnarray*}
\widetilde f_{+,1}(x)&=&\int_0^\infty f_{+,1}(x,v) dv\\
&=&  g_1(x) \left[1+ \frac{pq}{1-\bar p q}\left(\frac{(g_1*g_2)(x)}{g_1(x)}-1\right)\right].
\end{eqnarray*}

\end{proof}


\section{Proof of Lemma \ref{lem:NA-NW}}
\label{app:C}

Under the II policy the queue behaves as an M/M/1 queue with an exceptional first customer. Let $\hat \sigma$ be the expected service time of this first customer
and let $\hat \tau$ be the expected service time of the other customers.  Then (\cite{Bhat-64}  -  see also \prettyref{sec:IEBP}),
\begin{equation}
\label{appC:NW}
\E[N_W]=\frac{1-\lambda\hat \sigma+\lambda \hat \tau}{1-\lambda \hat \sigma}.
\end{equation}
$\hat \tau$ is the sum of the Willie's job expected service time (given by $1/\mu_1$) and of the expected time needed to serve all Alice's jobs inserted just after a Willie's job arrival.
The latter quantity is given by $qB/\mu_2$. Hence,  $\hat \tau=\frac{1}{\mu_1}+ \frac{qB}{\mu_2}$. $\hat \sigma$ is the sum of Willie's job expected service time (given by $1/\mu_1$)
and of the expected time needed to serve all Alice's jobs present in the queue at the beginning of a W-BP. The probability that there are $s$ such jobs 
in given by $\P(\mathcal{E}(s))$ in (\ref{eq:E1})-(\ref{eq:Es}) in Appendix \ref{app:D}. Therefore,
\[
\hat \sigma= \frac{1}{\mu_1}+\frac{1}{\mu_2}\sum_{s\geq 1} \P(\mathcal{E}(s)) s.
\]
Elementary algebra then gives
\[
\hat \sigma=\frac{1}{\mu_1} + \frac{q p}{\mu_2(1-q\bar p)}\left(\frac{1-  q(1- \mathcal{G}_Q(\bar p) )}{1-q\bar p} + \frac{B - (1+p)Q(1)}{p}\right).
\]
Introducing $\hat \tau$ and $\hat \sigma$ into (\ref{appC:NW}) gives (\ref{lem:NW}).  

During a W-BP, $qB\E[N_W]$ Alice's jobs are inserted on average. Therefore, $\E[N_A]$ is the sum of these jobs and of the expected number of jobs that Alice inserts during a 
W-IP. Let call $\E[N_{A,IP}]$ this number.  Let $\kappa$ be the number of
Alice's jobs in the system at the beginning of a W-IP. Note that these jobs were inserted just after the arrival of the last Willie's job served in the previous W-BP,
so that $\P(\kappa=k)=qQ(k)$ if $k\geq 1$ and $\P(\kappa=0)=\bar q+qQ(0)$. If $\kappa=0$ Alice's inserts $i\geq 1$ jobs in a W-IP  if either she inserts successfully $i$ jobs and stops there
(prob. $(q\bar p)^i \bar q$) or if she inserts $i$ jobs but the last one is not successful (prob. $(q\bar p)^{i-1} q p$) giving the overall prob. $q(q \bar p)^{i-1}( \bar p \bar q +p)$. Hence,  the expected number of Alice's jobs inserted in a W-IP given that $\kappa=0$ is $q( \bar p \bar q +p) \sum_{i\geq 1} (qp)^{i-1}\, i=q( \bar p \bar q +p)/(1-q\bar p)^2$.
If $\kappa>0$
Alice will not insert any job in a W-IP if a Willie's job arrives within the time to serve these $\kappa$ jobs, the probability of this event being $\bar p^\kappa$ and otherwise she will insert 
$i\geq 1$ jobs with the prob. $\bar p^\kappa q( \bar p \bar q +p)(qp)^{i-1}$. Hence, the expected number of Alice's jobs inserted in a W-IP given that $\kappa\geq 1$ is
$q( \bar p \bar q +p) \bar p^\kappa\sum_{i\geq 1}(qp)^{i-1}\,i=q( \bar p \bar q +p) \bar p^\kappa/(1-qp)^2$. Finally,
\begin{eqnarray*}
\E[N_{A,IP}]&=&\frac{q( \bar p \bar q +p)}{(1-q\bar p)^2} \left(\bar q+qQ(0)) +\sum_{k\geq 1}Q(k) \bar p^k\right)\\
&=&\frac{q( \bar p \bar q +p)}{(1-q\bar p)^2} \left(\bar q+qQ(0)) +\mathcal{G}_Q(\bar p) - Q(0)\right)
\end{eqnarray*}
and 
\[
\E[N_{A}]=qB \E[N_W] + q\,\frac{\bar p \bar q +p}{(1-q\bar p)^2} \left(\bar q \overline{Q}(0)+\mathcal{G}_Q(\bar p) \right),
\]
which concludes the proof.


\section{Proof of Lemma \ref{lem-Yqx}}
\label{app:D}
Throughout the proof we will skip the subscript $H_1$ in $\P_{H_1}(Y<x)$ for the sake of conciseness.
In this appendix $U_{\lambda}$ denotes an exponential rv with rate $\lambda$. 

Define the events
\begin{eqnarray*}
{\mathcal E}_s&=&\{s\ \hbox{ Alice's jobs interfere with } J \hbox{ given } J=1\}\\
{\mathcal F}_s&=&\{s\ \hbox{ Alice's jobs interfere with } J \hbox{ given } J\not=1\}\\
{\mathcal G}_l&=&\{\hbox{Alice inserts } l \hbox { jobs after the  arrival of a  Willie's  job}\}
\end{eqnarray*}
for $s\geq 0$, $l\geq 0$. We have 
\begin{eqnarray}
\P({\mathcal F}_0)&=& \bar q +qQ0) = 1 - q\overline{Q}(0)  \label{eq:F0}\\
\P({\mathcal F}_s)&=& q Q(s),\,\, s\geq 1   \label{eq:Fs}\\
\P({\mathcal G}_0)&=&\bar q+qQ(0)= 1 - q\overline{Q}(0)  \label{eq:G0}\\
\P({\mathcal G}_l)&=&qQ(l),\,\, l\geq 1\label{eq:GI}.
\end{eqnarray}
Let $T-$ be the time at which a W-BP ends. Time $T$ is the time at which Alice inserts one job with probability $q$  and
$0$ job with probability $\bar q$ is the system if empty at $T-$. Let us determine $\P({\mathcal E}_s)$ for $s\geq 0$. We have
\begin{equation}
\label{eq:E0-0}
\P({\mathcal E}_0)=\P({\mathcal E}_0\,|\, {\mathcal G}_0)(1 - q\overline{Q}(0))+   q \sum_{l\geq 1} \P({\mathcal E}_0\,|\, {\mathcal G}_l) Q(l).
\end{equation}
Recall that $\bar p=\frac{\mu_2}{\mu_2+\lambda}$ is the probability that no Willie's job arrives during the service time of a Alice's job.

Throughout, we will use that
\begin{equation}
\label{def:p}
 P\left(U_\lambda>\sum_{r=1}^k\tau_r\right)= \bar p^k
 \end{equation}
 when $\tau_1,\tau_2,\ldots$ are iid exponential rvs with rate $\mu_2$.
Given ${\mathcal G}_0$, there is no interference if Alice does not submit a job when an idle period starts (prob. $\bar q$)  or if Alice submits one job (prob. $q$)
and that during the service time of this Alice's job there is no arrival of a Willie job (prob.  $p$) and Alice 
does not submit another job when the system becomes idle again (prob. $\bar q$),  etc.
This gives (same argument/result as in (\ref{def-T+n}))
\begin{equation}
\label{eq:E0-1}
\P({\mathcal E}_0\,|\, {\mathcal G}_0) =  \bar q +  \bar q q \bar p +  \bar q  (q\bar p)^2 + \bar q(q\bar p)^3+\cdots
= \bar q\sum_{i=0}^\infty  (q\bar p)^i=\frac{\bar q}{1-q\bar p}.
\end{equation}
For $l\geq 1$, 
\begin{eqnarray*}
\P({\mathcal E}_0\,|\, {\mathcal G}_l)&=& \bar q \P\left(U_\lambda>\sum_{r=1}^l\tau_r\right) + \bar q q \P\left(U_\lambda>\sum_{r=1}^{l+1}\tau_r\right)\\
&&+
\bar q  q^2 \P\left(U_\lambda>\sum_{r=1}^{l+2}\tau_r\right)+\cdots\\
&=& \bar q \sum_{i=0}^\infty P\left(U_\lambda>\sum_{r=1}^{l+i}\tau_r\right) q^i= \bar q p^l  \sum_{i=0}^\infty (q\bar p)^i\\
&= &\frac{\bar q p^l}{1-q\bar p}.
\end{eqnarray*}
In summary, 
\begin{equation}
\label{eq:E0-cond}
\P({\mathcal E}_0\,|\, {\mathcal G}_l)=  \frac{\bar q \bar p^l}{1-q \bar p}, \quad \forall l\geq 0.
\end{equation}
Therefore, from (\ref{eq:E0-0})-(\ref{eq:E0-cond}),

\begin{eqnarray}
\P({\mathcal E}_0)&=&\frac{\bar q}{1-q\bar p}\left(1-q\overline{Q}(0) +q\sum_{l\geq 1}\bar p^l Q(l)\right)\nonumber\\
&=&\frac{\bar q}{1-q\bar p}(1-q\overline{Q}(0) + q\mathcal{G}_Q(\bar p) - qQ(0))\nonumber\\
&=&\frac{\bar q}{1-q\bar p} \left(\bar q+q\mathcal{G}_Q(\bar p)\right).
\label{eq:E0}
\end{eqnarray}

Consider now $\P({\mathcal E}_s\,|\, {\mathcal G}_l)$ for $s\geq 1$. We will investigate separately the case $s=1$ and $s\geq 2$. 
For $s=1$, $l\geq 1$, we have
\begin{eqnarray*}
\P({\mathcal E}_1\,|\, {\mathcal G}_l)&=& \P\left( \sum_{r=1}^{l-1} \tau_r< U_\lambda< \sum_{r=1}^l \tau_r\right)+ q \P\left( \sum_{r=1}^l \tau_r< U_\lambda< \sum_{r=1}^{l+1}\tau_r\right)\\
&&+ q^2 \P\left( \sum_{r=1}^{l+1} \tau_r< U_\lambda< \sum_{r=1}^{l+2}\tau_r\right)+ \cdots \\
&=& \sum_{i\geq 0} q^i \P\left( \sum_{r=1}^{l+i-1} \tau_r< U_\lambda< \sum_{r=1}^{l+i}\tau_r\right)\\
&=& \sum_{i\geq 0} q^i  \P\left( \sum_{r=1}^{l+i}\tau_r>U_\lambda\right) -  \sum_{i\geq 0} q^i  \P\left( \sum_{r=1}^{l+i-1}\tau_r>U_\lambda\right)\\
&=& \sum_{i\geq 0} q^i (1-\bar p^{l+i}) -  \sum_{i\geq 0} q^i (1-\bar p^{l+i-1})\\
&=&\bar p^{l-1} \sum_{i\geq 0} (q\bar p)^i- \bar p^l \sum_{i\geq 0} (q\bar p)^i\\
&=&\frac{\bar p p^{l-1}}{1-qp}.
\end{eqnarray*}
For $s=1$ and $l=0$, then 
\[
\P({\mathcal E}_1\,|\, {\mathcal G}_0) = 1- \P({\mathcal E}_0\,|\, {\mathcal G}_0) = \frac{q p}{1-q\bar p}.
\]
We then obtain

\begin{eqnarray}
\P({\mathcal E}_1)&=& \P({\mathcal E}_1\,|\, {\mathcal G}_0)\P({\mathcal G}_0) + \sum_{l \geq 1} \P({\mathcal E}_1\,|\, {\mathcal G}_l)\P({\mathcal G}_l)\nonumber\\
&=&\frac{q p}{1-q\bar p}(1-q\overline{Q}(0)) +\sum_{l\geq 1}   \frac{p \bar p^{l-1}}{1-q\bar p}  qQ(l)\nonumber\\
&=&\frac{qp}{1-q\bar p}\left(1-q\overline{Q}(0) +\frac{\mathcal{G}(Q)(\bar p)}{\bar p}-\frac{Q(0)}{\bar p}    \right).
\label{eq:E1}
\end{eqnarray}

Assume now that $s\geq 2$. There can be two interferences or more only if Willie interferes with jobs in the system at time $T-$. Therefore,
\[
\P({\mathcal E}_s\,|\, {\mathcal G}_l)=0 \quad \hbox{if  } 0\leq l<s
\] 
and, for $l\geq s$,
\begin{eqnarray*}
\P({\mathcal E}_s\,|\, {\mathcal G}_l)&=& \P\left(  \sum_{r=1}^{l-s} \tau_r<U_\lambda< \sum_{r=1}^{l-s+1}\tau_r \right)\\
&=& \P\left( U_\lambda< \sum_{r=1}^{l-s+1}\tau_r \right)- \P\left( U_\lambda< \sum_{r=1}^{l-s}\tau_r \right)\\
&=& 1-\bar p^{l-s+1} - (1-\bar p^{l-s})= \bar p^{l-s} p.
\end{eqnarray*}
We then obtain
\begin{equation}
\label{eq:Es}
\P({\mathcal E}_s)= \sum_{l\geq s}\P({\mathcal E}_s\,|\, {\mathcal G}_l)\P({\mathcal G}_l)= \frac{q p}{\bar p^s}\sum_{l=s}^\infty  Q(l)\bar p^l,  \quad \forall s\geq 2.
\end{equation}
Under $H_1$, the cdf of $Y$, Willie job reconstructed service time, is given by 
\begin{eqnarray*}
\lefteqn{\P(Y<x)}\\
&=&\P(Y<x\,|\, J=1)\pi_J + \P(Y<x\,|\, J\not=1)\bar \pi_J\\
&=&\P(Y<x\,|\, J=1,{\mathcal E}_0) \P({\mathcal E}_0)\pi_J +\sum_{s\geq 1}\P(Y<x\,|\, J=1,{\mathcal E}_s) \P({\mathcal E}_s)\bar \pi_J \\
&&+\P(Y<x\,|\, J\not=1,{\mathcal F}_0) \P({\mathcal F}_0)\bar \pi_J +\sum_{s\geq 1}\P(Y<x\,|\, J\not=1,{\mathcal F}_s) \P({\mathcal F}_s)\bar \pi_J)\\
&=& \P(\sigma<x)\left(\P({\mathcal E}_0)\pi_J  + \P({\mathcal F}_0)\bar \pi_J\right)
+\sum_{s\geq 1}\P\left(\sigma+\sum_{r=1}^s \tau_r<x\right)\left( \P({\mathcal E}_s)\pi_J+\P({\mathcal F}_s)\bar \pi_J\right)\\
&=& G_1(x) \left(\P({\mathcal E}_0)\pi_J + \P({\mathcal F}_0)\bar \pi_J\right) +\sum_{s\geq 1}(G_1*h_s)(x) \left( \P({\mathcal E}_s)\pi_J+\P({\mathcal F}_s)\bar \pi_J\right).
\end{eqnarray*}
From  (\ref{eq:F0}), (\ref{eq:Fs}), (\ref{eq:E0}),  (\ref{eq:E1}), and (\ref{eq:Es}) we find 
\begin{eqnarray*}
\P({\mathcal E}_0)\pi_J + \P({\mathcal F}_0)\bar \pi_J &=&\frac{\bar q}{1-q\bar p}(\bar q+ q \mathcal{G}_Q(\bar p))\pi_J +(1-q\overline{Q}(0))\bar \pi_J\\
\P({\mathcal E}_1)\pi_J+\P({\mathcal F}_1)\bar \pi_J &=& \frac{q p}{1-q\bar p}\left(1-q\overline{Q}(0) +\frac{\mathcal{G}_Q(\bar p)-Q(0)}{\bar p}\right)\pi_J \\
&&+ qQ(1)\bar \pi_J\\
\P({\mathcal E}_s)\pi_J+\P({\mathcal F}_s)\bar \pi_J&=&  q p \pi_J \sum_{l\geq s} Q(l)\bar p^{l-s} + q Q(s)\bar \pi_J, \quad \forall s\geq 2.
\end{eqnarray*}
Hence, for $x\geq 0$,
\begin{eqnarray}
\P(y<x)&=&G_1(x)\left(\frac{\bar q}{1-q\bar p}(\bar q+ q\mathcal{G}_Q(\bar p))\pi_J+(1-q\overline{Q}(0))\bar \pi_J\right)
+ q(G_1*h_1)(x)\nonumber\\
&&\times \left(\frac{p}{1-q\bar p}\left(1-q\overline{Q}(0)+\frac{\mathcal{G}_Q(\bar p)-Q(0)}{\bar p}\right)\pi_J + Q(1)\bar \pi_J\right)\nonumber\\
&&
+q \sum_{s\geq 2} (G_1*h_s )(x)\left( p  \pi_J \sum_{l\geq s} Q(l)p^{l -s}+ Q(s)\bar \pi_J\right).
\label{eq:y}
\end{eqnarray}
Dividing both sides of (\ref{eq:y}) by $g_1(x)$  gives (\ref{lem:eq:Y}). 


\section{Proof of Lemma \ref{lem-power-series}}
\label{app:E}
For any mapping $h(q)$, we denote by $h^\prime(q)$ its 1st derivative and by $h^{\prime\prime}(q)$ its 2nd derivative at $q$ when they do exist. 

Define 
\begin{equation}
\label{def:Fq}
F(q)=\E\left[\sqrt{W(q,X)}\right]=\int_0^\infty \sqrt{g_1(x)} \sqrt{g_1(x)W(q,x)} \,\,dx. 
\end{equation}
Notice that, by Cauchy-Schwarz inequality,
\[
0\leq F(q)\leq \left(\int_0^\infty g_1(x) dx \right)^{1/2} \left( \int_0^\infty g_1(x) W(q,x)dx\right)^{1/2}=1,
\]
since $g_1(x) W(q,x)$ is the density of a nonnegative rv, which implies that $\int_0^\infty g_1(x)W(q,x)dx=1$.

Define $f(q,x)= \sqrt{ \Delta_1(q)+\Delta_2 (q)\Phi_1(x)+q\Phi_2(x)}$, so that (cf. (\ref{def:Fq}))
\[
F(q)=\int_0^\infty \mu_1 e^{-\mu_1 x} f(q,x) dx.
\]
For later use, note that
\begin{eqnarray}
\label{d-f}
\frac{df(q,x)}{dq}&=&\frac{ \Delta^\prime_1(q) + \Delta^\prime_2(q)\Phi_1(x)+\Phi_2(x)}{2f(q,x)}\\
\frac{d^2f(q,x)}{dq^2}&=&\frac{\Delta_1^{\prime\prime}(q)+ \Delta_2^{\prime\prime}(q)\Phi_1(x)}{2f(q,x)}
-\frac{ \left( \Delta^\prime_1(q) + \Delta^\prime_2(q)\Phi_1(x)+\Phi_2(x)\right)^2}{4 f(q,x)^3}.
\label{d2-f}
\end{eqnarray}
Assume that  there exist $0<q<\min\left\{1,\frac{1-\rho_1}{\rho_2 B}\right\}$ (recall that the queue is stable when $\rho_1+q\rho_2B<1$ -- see Lemma \ref{lem:NA-NW}) 
and three non-negative mappings $h_i$, $i=0,1,2$, satisfying $\int_0^\infty e^{-\mu_1 x}h_i(x)dx<\infty$, $i=0,1,2$, such that
\begin{enumerate}
\item
for all $x\geq 0$, $q \to f(q,x)$ is continuous  in  $[0,q_1)$;
\item
for all $q\in [0,q_1)$, $x\to  f(q,x)$ is continuous in $[0,\infty)$;
\item
for all $(q,x)\in [0,q_1)\times [0,\infty)$, $|f(q,x)|\leq h_0(x)$;
\item
for all $x\geq 0$, $q \to \frac{d f(q,x)}{dq}$ is continuous  $q[0,q_1)$;
\item
for all $q\in [0,q_1)$, $q \to \frac{df(q,x)}{dq}$  is continuous in $[0,\infty)$;
\item
for all $(q,x)\in [0,q_1)\times [0,\infty)$, $\left|\frac{df(q,x)}{dq} \right |\leq h_1(x)$;
\item
for all $x\geq 0$, $q \to \frac{d^2 f(q,x)}{dq^2}$ is continuous  in $[0,q_1)$;
\item
for all $q\in [0,q_1)$, $q \to \frac{d^2f(q,x)}{dq^2}$  is continuous in $[0,\infty)$;
\item
for all $(q,x)\in [0,q_1)\times [0,\infty)$, $\left|\frac{d^2f(q,x)}{dq^2} \right |\leq h_2(x)$.
\end{enumerate}
Then, by the Dominated Convergence Theorem,  (1)-(9) will ensure that $F(q)$ is  twice differentiable in $[0,q_1)$.\\

When $g_1(x)=\mu_1 e^{-\mu_1 x}$,
\begin{eqnarray}
\frac{g_1* h_s(x)}{g_1(x)}&=&\frac{\mu_2^s}{(s-1)!}\int_0^x \mu_2^s (x-t)^{s-1} e^{-(\mu_1-\mu_2)(x-t)} dt\nonumber\\
&=&\frac{\mu_2^s}{(s-1)!}\int_0^x \mu_2^s u^{s-1} e^{-(\mu_1-\mu_2)u} du,
\label{g1*hs}
\end{eqnarray}
which, for each $s\geq 1$,  is continuous in  $[0,\infty)$. We may then apply the Beppo Levi Monotone Convergence Theorem to $\Phi_2(x)$ (as  both sums in $\Phi_2(x)$ have non-negative terms) 
to get that the mapping $x\to \Phi_2(x)$ is continuous on $[0,\infty)$, and so is the mapping $x\to \Phi_1(x)$.  

On the other hand, it is easily from the definitions 
of $\Delta_1(q)$ and $\Delta_2(q)$ that
\[
x\to \{\Delta_1(q), \Delta^\prime_1(q), \Delta_1^{\prime\prime}(q), \Delta_2(q), \Delta^\prime_2(q), 
\Delta_2^{\prime\prime}(q)\}
\]
 are all continuous mappings in $[0,1]$.
This shows (1) and (2); this will also show (4), (5), (7) and (8) if we can show that  there exists $q_2\in (0,q_1)$ such that $f(x,q)\not =0$ for all $q\in [0,q_2)$, $x\geq 0$.
From the definition of $f(q,x)$, we see that 
\begin{equation}
f(q,x) \geq \Delta_1(q) \geq \bar q \left(\bar q + q {\cal G}_Q(\bar p)\right)\pi_J,
\label{lb:fqx}
\end{equation}
for all $q\in [0,1]$, $x\geq 0$. It is easily seen that the mapping $q\to \zeta(q):=\bar q \left(\bar q + q {\cal G}_Q(\bar p)\right)\pi_J$ is strictly decreasing in $[0,1]$ with
$\zeta(0)=1$ and $\zeta(1)=0$. Pick $\delta\in (0,1)$. The above implies that there exists $q_2\in (0,q_1)$ ) such that $(\bar q + q {\cal G}_Q(\bar p))\pi_J\geq \delta$ for  all
$q\in [0,q_2]$, which in turn implies that
\begin{equation}
\label{lw-fqx}
f(q,x)\geq \delta>0,
\end{equation}
for all $q\in [0,q_2]$, $x\geq 0$. This establishes the validity of  (4), (5), (7) and (8).

We are left with proving (3), (6), and (9). For $q\in [0, q_1)$, $x\geq 0$, we have
\begin{eqnarray}
f(q,x)&\leq &1+\eta_1+\eta_2\Phi_1(x)+\Phi_2(x) :=k_1(x)\label{lower-bound-fqx}\\
\left|\frac{df(q,x)}{dq}\right|&\leq& \frac{1}{2\delta}(\eta_3+\eta_4 \Phi_1(x)+\Phi_2(x)):=k_2(x)\label{lower-bound-dfqx}\\
\left|\frac{d^2f(q,x)}{dq^2}\right|&\leq& \frac{1}{2\delta}(\eta_5+\eta_6 \Phi_1(x))+\frac{1}{4\delta^3}\left(\eta_3+\eta_4 \Phi_1(x)+\Phi_2(x)\right)^2:=k_3(x), \label{lower-bound-d2fqx} 
\end{eqnarray} 
where
\begin{eqnarray*}
&&\eta_1:=\sup_{q\in [0,q_1)}|\Delta_1(q)|, \quad \eta_2:=\sup_{q\in [0,q_1)} |\Delta_1^{\prime}(q)|\nonumber\\
&&\eta_3:=\sup_{q\in [0,q_1)} |\Delta_1^{\prime\prime}(q)|,\quad  \eta_4:=\sup_{q\in [0,q_1)}|\Delta_2(q)|\nonumber\\
&& \eta_5:=\sup_{q\in [0,q_1)} |\Delta_2^{\prime}(q)|, \quad  \eta_6:=\sup_{q\in [0,q_1)} |\Delta_2^{\prime\prime}(q)|.
\end{eqnarray*}
The constants $\eta_i$, $i=1,\ldots,6$ are all finite as the mappings
\[
q\to \{\Delta_1(q),  \Delta_1^{\prime}(q),\Delta_1^{\prime\prime}(q), \Delta_2(q),\Delta_2^{\prime}(q), \Delta_2^{\prime\prime}(q)\}
\]
are all continuous in $[0,q_1)$ from the very definition of $\Delta_1(q)$ and $\Delta_2(q)$ in (\ref{def:Delta1})-(\ref{def:Delta2}).

By Beppo Levi Monotone Convergence Theorem (which applies here as all terms in the sums in (\ref{def:Phi2}) are non-negative as already noticed) we get 
\begin{eqnarray*}
\int_0^\infty g_1(x) \Phi_2(x)dx&=& \sum_{s\geq 2} \sum_{l\geq s} Q(l)p^{l-s} \int_0^\infty g_1*h_s(x)dx\\
&=&  \sum_{s\geq 2} \sum_{l\geq s} Q(l)p^{l-s}<\infty,
\end{eqnarray*}
where the finiteness of $\sum_{s\geq 2} \sum_{l\geq s} Q(l)p^{l-s}$ is shown in  
Lemma \ref{lem:app-F} in Appendix \ref{app:F}. This shows that $\int_0^\infty h_i(x)dx <\infty$, $i=1,2$, where $h_1(x)$ and $h_2(x)$ are defined
in (\ref{lower-bound-fqx})-(\ref{lower-bound-dfqx}), and proves the validity of (3) and (6).

It is shown in Lemma \ref{lem:app-F3} in Appendix \ref{app:F} that when  $\mu_1<2\mu_2$
\[
\int_0^\infty \mu_1^{-\mu_1 x} \left(\eta_4 \Phi_1(x)+\Phi_2(x)\right)^2 dx<\infty,
\]
which implies  together with  the finiteness
of  $\int_0^\infty g_1(x) \Phi_i(x)dx$, $i=1,2$,  that $\int_0^\infty \mu_1 e^{-\mu_1 x} h_3(x)dx$,
where $h_3(x)$ is defined in (\ref{lower-bound-d2fqx}). This proves (9) when $\mu_1<2\mu_2$.

We have therefore shown that there exists $q_1\in (0,q_0)$ such $F(q)$ is twice differentiable  in $[0,q_1)$ when $\mu_1<2\mu_2$. Application of Leibniz's differentiation rule gives
\begin{eqnarray}
F(0)&=&1\nonumber\\
F^{\prime}(0)&=&\frac{1}{2}\int_0^\infty \mu_1 e^{-\mu_1 x} \left(\alpha+\beta\Phi_1(x)+\Phi_2(x)\right)dx
 \label{dF0}\\
F^{\prime\prime}(0)&=&-\frac{1}{4}\int_0^\infty \mu_1 e^{-\mu_1 x}\left(\alpha+\beta\Phi_1(x)+\Phi_2(x)\right)^2 dx\nonumber\\
&&+(p -{\cal G}_Q(\bar p)) \left(p \pi_J - \int_0^\infty \mu_1 e^{-\mu_1 x}\Phi_1(x)dx \right) \nonumber\\
&=& -\frac{1}{4}\int_0^\infty \mu_1 e^{-\mu_1 x}\left(\alpha+\beta\Phi_1(x)+\Phi_2(x)\right)^2 dx
\label{d2F0}
\end{eqnarray}
where 
\begin{eqnarray}
\alpha&:=&-(p+\overline{{\cal G}_Q} (\bar p))\pi_J -\overline{Q}(0)\bar \pi_J\label{def:alpha}\\
\beta&:=&1+\frac{{\cal G}_Q(\bar p)-Q(0)}{\bar p}\label{def:beta}.
\end{eqnarray}
Note that (\ref{d2F0}) holds since $\int_0^\infty \mu_1 e^{-\mu_1 x} \Phi_1(x)dx = p\pi_J \int_0^\infty g_1*h_s (x)dx = p \pi_J$ from
the definition of $\Phi_1(x)$ in (\ref{def:Phi1}), and since $g_1*h_s$ is a pdf on $[0,\infty)$.\\

It is shown in Lemma \ref{lem:app-F-2} in Appendix \ref{app:F} that $F^\prime(0)=0$. Hence, by Taylor's Theorem, 
\begin{equation}
\label{eq:ps}
F(q)= 1 +c_0 q^2 + o(q^2),
\end{equation}
with $c_0:=\frac{1}{2}F^{\prime\prime}(0)<\infty$ when $\mu_1<2\mu_2$, which completes the proof.

\section{Appendix}
\label{app:F}

\begin{lemma}
\label{lem:app-F}
\[
\sum_{s\geq 2}\sum_{l\geq s} Q(l)\bar p^{l-s}=\frac{1-\bar p \mathcal{G}_Q(\bar p)}{p}-  \frac{\mathcal{G}_Q(\bar p)(1+\bar p)-Q(0)}{\bar p}.
\]
\end{lemma}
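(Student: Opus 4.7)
The plan is to prove the identity by a direct manipulation: swap the order of summation, evaluate the resulting geometric series in closed form, and then express the remaining sums over $Q(l)$ in terms of the generating function $\mathcal{G}_Q(\bar p)=\sum_{l\geq 0}\bar p^l Q(l)$.

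First I would interchange the two summations. Since the double index set $\{(s,l): s\ge 2,\ l\ge s\}$ equals $\{(s,l): l\ge 2,\ 2\le s\le l\}$, we get
\[
\sum_{s\geq 2}\sum_{l\geq s} Q(l)\bar p^{l-s}
= \sum_{l\geq 2} Q(l)\sum_{s=2}^{l} \bar p^{l-s}
= \sum_{l\geq 2} Q(l)\sum_{j=0}^{l-2}\bar p^{j}
= \sum_{l\geq 2} Q(l)\,\frac{1-\bar p^{\,l-1}}{p},
\]
where the substitution $j=l-s$ and $1-\bar p = p$ were used.

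Next I would split the remaining single sum into the two pieces $\sum_{l\geq 2}Q(l)=1-Q(0)-Q(1)$ and
\[
\sum_{l\geq 2} Q(l)\bar p^{\,l-1}
=\frac{1}{\bar p}\sum_{l\geq 2} Q(l)\bar p^{\,l}
=\frac{1}{\bar p}\bigl(\mathcal{G}_Q(\bar p)-Q(0)-Q(1)\bar p\bigr).
\]
Substituting these two expressions back yields, after cancelling the $Q(1)$ terms,
\[
\sum_{s\geq 2}\sum_{l\geq s} Q(l)\bar p^{l-s}
= \frac{1}{p}\left(1-Q(0)-\frac{\mathcal{G}_Q(\bar p)-Q(0)}{\bar p}\right)
= \frac{1}{p} - \frac{\mathcal{G}_Q(\bar p)}{p\bar p} + \frac{Q(0)}{\bar p}.
\]

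Finally I would check that the right-hand side of the lemma reduces to the same expression. Using $\dfrac{\bar p}{p}+\dfrac{1+\bar p}{\bar p}=\dfrac{\bar p^{2}+p+p\bar p}{p\bar p}=\dfrac{1}{p\bar p}$,
\[
\frac{1-\bar p \,\mathcal{G}_Q(\bar p)}{p}-\frac{\mathcal{G}_Q(\bar p)(1+\bar p)-Q(0)}{\bar p}
=\frac{1}{p}+\frac{Q(0)}{\bar p}-\mathcal{G}_Q(\bar p)\!\left(\frac{\bar p}{p}+\frac{1+\bar p}{\bar p}\right)
=\frac{1}{p}+\frac{Q(0)}{\bar p}-\frac{\mathcal{G}_Q(\bar p)}{p\bar p},
\]
which agrees with what we derived, completing the proof. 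There is no real obstacle here; the only point to be careful about is keeping track of the $Q(0)$ and $Q(1)$ boundary contributions when rewriting $\sum_{l\geq 2}Q(l)\bar p^{\,l-1}$ in terms of $\mathcal{G}_Q(\bar p)$, which is where arithmetic slips could occur.
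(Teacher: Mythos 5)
Your proof is correct and takes essentially the same route as the paper's: interchange the order of summation, evaluate the inner geometric series, and express everything through $\mathcal{G}_Q(\bar p)$. The only cosmetic difference is that the paper first rewrites the sum as $\sum_{s\ge 0}$ minus the $s=0$ and $s=1$ terms before interchanging, whereas you interchange directly on $\{s\ge 2,\ l\ge s\}$ and let the $Q(1)$ contributions cancel; both are equally valid.
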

\begin{proof}
\begin{eqnarray*}
\sum_{s\geq 2}\sum_{l\geq s} Q(l)\bar p^{l-s}&=& \sum_{s\geq 0}\sum_{l\geq s} Q(l)\bar p^{l-s}-  \sum_{l\geq 1} Q(l)\bar p^{l-1} - \sum_{l\geq 0} Q(l)\bar p^l\\
&=&\sum_{s\geq 0}\sum_{l\geq s} Q(l)\bar p^{l-s} - \frac{\mathcal{G}_Q(\bar p)-Q(0)}{\bar p}-\mathcal{G}_Q(\bar p)\\
&=& \sum_{l\geq 0} Q(l)\bar p^l \sum_{s=0}^l \left(\frac{1}{\bar p}\right)^s  - \frac{\mathcal{G}_Q(\bar p)(1+\bar p)-Q(0)}{\bar p}\\
&=&\sum_{l\geq 0} Q(l)\bar p^l  \left(\frac{1-{\bar p}^{-(l+1)}}{1-\bar p^{-1}}\right) -  \frac{\mathcal{G}_Q(\bar p)(1+\bar p)-Q(0)}{\bar p}    \\
&=& -\frac{1}{p} \sum_{l\geq 0} Q(l)\bar p^l \left(1-\bar p^{-1}\right) -  \frac{\mathcal{G}_Q(\bar p)(1+p)-Q(0)}{\bar p}    \\
&=&\frac{1-\bar p \mathcal{G}_Q(\bar p)}{p}-  \frac{\mathcal{G}_Q(\bar p)(1+\bar p)-Q(0)}{\bar p}.
\end{eqnarray*}
\end{proof}

\begin{lemma}
\label{lem:app-F-2}
\[
F^\prime(0)=0,
\]
where $F^\prime(0)$ is given in (\ref{dF0}).
\end{lemma}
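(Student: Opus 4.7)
The plan is a direct computation: integrate term by term against $g_1(x)=\mu_1 e^{-\mu_1 x}$, use that each convolution $g_1\ast h_s$ is a probability density on $[0,\infty)$, substitute the closed form from Lemma~\ref{lem:app-F}, and show the resulting expression collapses to zero.

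First, I would note the three elementary identities
\begin{equation*}
\int_0^\infty g_1(x)\,dx=1, \qquad \int_0^\infty (g_1\ast h_s)(x)\,dx=1\ \ (s\geq 1),
\end{equation*}
and hence, from the definitions of $\Phi_1$ and $\Phi_2$ in \eqref{def:Phi1}--\eqref{def:Phi2},
\begin{equation*}
\int_0^\infty g_1(x)\Phi_1(x)\,dx = p\pi_J,\qquad
\int_0^\infty g_1(x)\Phi_2(x)\,dx = p\pi_J \sum_{s\geq 2}\sum_{l\geq s}Q(l)\bar p^{\,l-s}+\bar\pi_J\overline{Q}(0).
\end{equation*}
Substituting in \eqref{dF0} gives $2F'(0)=\alpha+\beta p\pi_J+p\pi_J S+\bar\pi_J\overline{Q}(0)$, where $S:=\sum_{s\geq 2}\sum_{l\geq s}Q(l)\bar p^{\,l-s}$.

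Next I would apply Lemma~\ref{lem:app-F} to rewrite
\begin{equation*}
p\pi_J S = \pi_J\bigl(1-\bar p\,\mathcal{G}_Q(\bar p)\bigr) - \frac{p\pi_J}{\bar p}\bigl(\mathcal{G}_Q(\bar p)(1+\bar p)-Q(0)\bigr),
\end{equation*}
and use the definitions $\alpha=-(p+\overline{\mathcal{G}_Q}(\bar p))\pi_J-\overline{Q}(0)\bar\pi_J$ and $\beta p\pi_J=p\pi_J+\frac{p\pi_J}{\bar p}\bigl(\mathcal{G}_Q(\bar p)-Q(0)\bigr)$. Adding the first three pieces, the $p\pi_J$ terms cancel, the $Q(0)$ contributions in the two $p\pi_J/\bar p$ terms cancel, and what remains inside the brackets is
\begin{equation*}
\pi_J\bigl[-\overline{\mathcal{G}_Q}(\bar p)+1-\bar p\,\mathcal{G}_Q(\bar p)\bigr] - \frac{p\pi_J}{\bar p}\bar p\,\mathcal{G}_Q(\bar p) - \overline{Q}(0)\bar\pi_J = p\pi_J\mathcal{G}_Q(\bar p)-p\pi_J\mathcal{G}_Q(\bar p)-\overline{Q}(0)\bar\pi_J = -\overline{Q}(0)\bar\pi_J,
\end{equation*}
where I used $\overline{\mathcal{G}_Q}(\bar p)=1-\mathcal{G}_Q(\bar p)$. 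Adding the final term $\bar\pi_J\overline{Q}(0)$ from $\int g_1\Phi_2$ yields $0$, so $F'(0)=0$.

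The proof is essentially bookkeeping; the only potentially error-prone step is the algebraic collapse, so the main care is to keep the $\pi_J$ and $\bar\pi_J$ coefficients separate and to invoke Lemma~\ref{lem:app-F} exactly once to eliminate the inner double sum. Everything else follows from the two elementary normalization identities above.
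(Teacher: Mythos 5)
Your proposal is correct and follows essentially the same route as the paper's own proof: compute $\int g_1\Phi_1=p\pi_J$ and $\int g_1\Phi_2 = p\pi_J\sum_{s\ge2}\sum_{l\ge s}Q(l)\bar p^{\,l-s}+\bar\pi_J\overline{Q}(0)$ via the normalization of $g_1*h_s$, substitute into \eqref{dF0}, invoke Lemma~\ref{lem:app-F} for the double sum, and verify that the terms cancel. The algebraic collapse you report is accurate.
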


\begin{proof}
The definitions of $\Phi_1(x)$ and $\Phi_2(x)$ in (\ref{def:Phi1})-(\ref{def:Phi2}) yield
\begin{equation}
\int_0^\infty g_1(x) \Phi_1(x)dx =p \pi_j \int_0^\infty g_1*h_1 (x) dx =p \pi_j,
\label{eq:Phi1-2}
\end{equation}
with $g_1(x)=\mu_1 e^{-\mu_1 x}$, and 
\begin{eqnarray}
&& \int_0^\infty \mu_1 e^{-\mu_1 x} \Phi_2(x)dx\nonumber\\
&&=p \pi_J \int_0^\infty  
 \sum_{s\geq 2}g_1*h_s (x)\sum_{l\geq s} Q(l)\bar p^{l-s} dx +\bar \pi_J \int_0^\infty \sum_{s\geq 1} g_1*h_s(x) Q(x) dx \nonumber\\
 &&= p \pi_J  \sum_{s\geq 2} \left(\int_0^\infty g_1*h_s (x) dx\right) \sum_{l\geq s} Q(l)\bar p ^{l-s}+\bar \pi_J\sum_{s\geq 1} \int_0^\infty g_1*h_s (x)Q(s) dx 
 \label{eq:sum-integral}\\
 &&= p \pi_J  \sum_{s\geq 2} \sum_{l\geq s} Q(l)\bar p^{l-s} +\bar \pi_J(1-Q(0)),
 \label{eq:Phi2-2}
 \end{eqnarray}
since $\int_0^\infty g_1*h_s(x) dx=1$ and   $\sum_{s\geq 0}Q(s)=1$, 
where the interchange of the integrals and sums  in (\ref{eq:sum-integral}) is justified by the Beppo Levi's Monotone Convergence Theorem.
Thus, by using  (\ref{dF0}), the definition of $\alpha$ and $\beta$ in (\ref{def:alpha})-(\ref{def:beta}), (\ref{eq:Phi1-2}), (\ref{eq:Phi2-2}), and
Lemma \ref{lem:app-F},  we obtain
\begin{eqnarray*}
\lefteqn{2F^{\prime}(0)= -(p +\overline{\mathcal{G}_Q}(\bar p))\pi_J-\overline{Q}(0)\bar \pi_J +\left(1+\frac{\mathcal{G}_Q(\bar p)-Q(0)}{\bar p}\right)p \pi_J} \\
&&+ \pi_J (1-\bar p \mathcal{G}_Q(\bar p))- \frac{\mathcal{G}_Q(\bar p)(1+\bar p)-Q(0)}{\bar p} p \pi_J +\bar \pi_J(1-Q(0))\\
&=&Q(0)\left(\bar \pi_J-\frac{p\bar \pi_J}{\bar p}+\frac{ p\bar \pi_J}{\bar p}-\bar \pi_J\right)
+\mathcal{G}_Q(\bar p)\left(\pi_J+\frac{p \pi_J}{\bar p} -\bar p\pi_J-\frac{1+\bar p}{\bar p}p \pi_J  \right)=0.
\end{eqnarray*}
\end{proof}
\begin{lemma}
\label{lem:app-F3}
Assume that the support of $Q_B$ is finite. Then,
\[
\int_0^\infty \mu_1 e^{-\mu_1 x} \left(\eta_4 \Phi_1(x)+\Phi_2(x)\right)^2 dx<\infty
\] 
if $\mu_1<2\mu_2$.
\end{lemma}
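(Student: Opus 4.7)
The plan is to exploit the finite support of $Q$ to reduce $\Phi_2(x)$ to a finite linear combination of ratios $\frac{(g_1\ast h_s)(x)}{g_1(x)}$, bound each such ratio explicitly for large $x$, and then check integrability of the square against $\mu_1 e^{-\mu_1 x}$ in the three regimes $\mu_1<\mu_2$, $\mu_1=\mu_2$, $\mu_2<\mu_1<2\mu_2$.

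First, with $g_1(x)=\mu_1 e^{-\mu_1 x}$ and $h_s(x)=\mu_2^s x^{s-1}e^{-\mu_2 x}/(s-1)!$, a direct computation of the convolution gives
\begin{equation*}
\frac{(g_1\ast h_s)(x)}{g_1(x)}=\frac{\mu_2^s}{(s-1)!}\int_0^x t^{s-1}e^{(\mu_1-\mu_2)t}\,dt,\qquad s\ge 1.
\end{equation*}
From this I would derive, for each fixed $s\ge 1$, a uniform bound of the form
\begin{equation*}
\frac{(g_1\ast h_s)(x)}{g_1(x)}\le K_s\bigl(1+x^{s-1}\bigr)e^{(\mu_1-\mu_2)_+ x},\qquad x\ge 0,
\end{equation*}
where $(a)_+=\max(a,0)$. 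Indeed, when $\mu_1<\mu_2$ the integral is bounded by $\Gamma(s)/(\mu_2-\mu_1)^s$; when $\mu_1=\mu_2$ it equals $x^s/s$; and when $\mu_1>\mu_2$, integration by parts (or the explicit upper incomplete gamma function) shows it is $O(x^{s-1}e^{(\mu_1-\mu_2)x})$.

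Next I would invoke the hypothesis that $Q$ has finite support, contained in $\{0,1,\dots,S\}$ say. Looking at the definitions \eqref{def:Phi1}--\eqref{def:Phi2}, this truncates the outer sums in $\Phi_2(x)$ at $s=S$, so that $\eta_4\Phi_1(x)+\Phi_2(x)$ is a finite non-negative linear combination of the ratios $\frac{(g_1\ast h_s)(x)}{g_1(x)}$ for $1\le s\le S$, with coefficients depending only on $p$, $\pi_J$, $\bar\pi_J$, $\eta_4$, and $\{Q(s)\}_{s\le S}$. Combining with the previous bound I obtain a single estimate
\begin{equation*}
\eta_4\Phi_1(x)+\Phi_2(x)\le K\bigl(1+x^{S-1}\bigr)e^{(\mu_1-\mu_2)_+ x},\qquad x\ge 0,
\end{equation*}
for some constant $K<\infty$ that depends on $S$ but not on $x$.

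Finally, squaring and integrating gives
\begin{equation*}
\int_0^\infty \mu_1 e^{-\mu_1 x}\bigl(\eta_4\Phi_1(x)+\Phi_2(x)\bigr)^2 dx\le K^2\mu_1\int_0^\infty (1+x^{S-1})^2\,e^{-(\mu_1-2(\mu_1-\mu_2)_+)x}\,dx.
\end{equation*}
The exponent in the exponential equals $\mu_1$ if $\mu_1\le\mu_2$, and equals $2\mu_2-\mu_1>0$ if $\mu_2<\mu_1<2\mu_2$; in both cases it is strictly positive, so the integral converges against the polynomial factor. This proves finiteness whenever $\mu_1<2\mu_2$.

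The only nontrivial step is the asymptotic bound on $(g_1\ast h_s)/g_1$; the rest is bookkeeping. I do not expect any real obstacle, since the finite support of $Q$ removes all issues of interchanging sums with integrals and reduces the problem to a finite family of elementary integrals.
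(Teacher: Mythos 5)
Your proof is correct in substance and follows essentially the same path as the paper: use the finite support of $Q$ to truncate $\Phi_1,\Phi_2$ to a finite linear combination of ratios $(g_1*h_s)(x)/g_1(x)$, then reduce the target integral to a finite family of exponential--polynomial integrals, each of which converges precisely because $2\mu_2-\mu_1>0$. The one (immaterial) slip is at $\mu_1=\mu_2$: there $\int_0^x t^{s-1}\,dt = x^s/s$ has degree $s$, not $s-1$, so your envelope should read $K_s(1+x^{s})e^{(\mu_1-\mu_2)_+x}$; since $e^{-\mu_1 x}$ absorbs any polynomial factor, the conclusion is unaffected. The paper's Appendix instead writes out $(g_1*h_s)/g_1$ exactly and expands the square as a double sum $\sum_{i,j}\frac{(g_1*h_i)(x)}{g_1(x)}\frac{(g_1*h_j)(x)}{g_1(x)}$ before checking finiteness term by term; your single-envelope bound is a leaner packaging of the same argument.
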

\begin{proof}
Without any loss of generaly, assume that the support of $Q_B$ is in $\{0,1,\ldots,S\}$ with $S\leq 1$.
From (\ref{def:Phi1})-(\ref{def:Phi2}) we obtain
\begin{eqnarray}
\lefteqn{(\eta_1 \Phi_1(x)+\Phi_2(x))^2\leq \max(\eta_4^2, 1) (\Phi_1(x)+\Phi_2(x))^2}\nonumber \\
&\leq & \max(\eta_4^2, 1)\Biggl(\sum_{s=1}^S\frac{g_1*h_s (x)}{g_1(x)}\sum_{l=s}^S Q(l)\bar p^{l-s}
 +  \bar \pi_J \sum_{s=1}^S\frac{ g_1*h_s (x)}{g_1(x)} Q(s) + \pi_J\frac{g_1*h_1(x)}{g_1(x)} \Biggr)^2\nonumber\\
 &\leq &\max(\eta_4^2, 1)
  \left(\sum_{s=1}^S\frac{g_1*h_s(x)}{g_1(x)}\left[Q(s)+ \sum_{l=s}^S Q(l)\bar p^{l-s} \right]\right)^2\nonumber\\
  &\leq & \max(\eta_4^2, 1) (S+1)^2\left( \sum_{s=1}^S\frac{g_1*h_s(x)}{g_1(x)}\right)^2\nonumber\\
  &=&\max(\eta_4^2, 1) (S+1)^2 \sum_{i=1}^S \sum_{j=1}^S \frac{g_1*h_i(x)}{g_1(x)} \frac{g_1*h_j(x)}{g_1(x)},
  \label{ub-phi1phi2}
  \end{eqnarray}
by using the identity $\left(\sum_i a_i\right)^2=\sum_i\sum_j a_i a_j$. 
With   $g_i(x)=\mu_i e^{-\mu_i x}$ for $i=1,2$ it is easily seen that
\begin{eqnarray}
\frac{g_1*h_s(x)}{g_1(x)} &=&\frac{\mu_2^s}{(s-1)!} \int_0^x  t^{s-1} e^{-(\mu_ 2-\mu_1)t}dt\nonumber\\
&=& \left\{ \begin{array}{ll}
\frac{(\mu_1 x)^s}{s!}  &\mbox{if $\mu_1=\mu_2$}\\
 \frac{1}{(s-1)!}\left(\frac{\mu_2}{(\mu_2-\mu_1)}\right)^s-\mu_2^se^{-(\mu_2-\mu_1)x}  &\\
\times \sum_{i=0}^{s-1} \frac{1}{(\mu_2-\mu_1)^{i+1}} \frac{x^{s-1-i}}{(s-1-i)!} & \mbox{if $\mu_1\not=\mu_2$.}
                \end{array}
         \right.
         \label{g1-convol-prop2}
\end{eqnarray}
A glance at (\ref{ub-phi1phi2}) and (\ref{g1-convol-prop2}) shows that the r.h.s. of (\ref{ub-phi1phi2}) is a finite sum of terms of the form $x^k$, $e^{-(\mu_2-\mu_1)x} x^k$, and
   $e^{-2(\mu_2-\mu_1)x} x^k$.  Now, since integrals $$\int_0^\infty \mu_1 e^{-\mu_1x} x^k dx = \frac{k!}{\mu_1^k}$$ and  $$\int_0^\infty \mu_1 e^{-\mu_1x} e^{-(\mu_2 -\mu_1)x} x^k dx = 
  \frac{\mu_1 k!}{\mu_2^{k+1}}$$ are finite,  and the integral 
  \[
  \int_0^\infty \mu_1 e^{-\mu_1x} e^{-2(\mu_2 -\mu_1)x} x^k dx=\frac{\mu_1 k!}{(2\mu_2-\mu_1)^{k+1}}
 \]
is finite for $\mu_1<2\mu_2$,  the lemma is proved. 
\end{proof}

\end{document}